\newtheorem{theorem}{Theorem}
\theoremstyle{plain}
\newtheorem{corollary}{Corollary}
\newtheorem{lemma}{Lemma}
\newtheorem{proposition}{Proposition}
\newtheorem{remark}{Remark}
\numberwithin{equation}{section}
\begin{document}
\title[Monotonicity rules for the ratio of two Laplace transforms]{The
monotonicity rules for the ratio of two Laplace transforms with applications}
\author{Zhen-Hang Yang and Jing-Feng Tian}
\address{Zhen-Hang Yang, College of Science and Technology\\
North China Electric Power University, Baoding, Hebei Province, 071051, P.
R. China and Department of Science and Technology, State Grid Zhejiang
Electric Power Company Research Institute, Hangzhou, Zhejiang, 310014, China}
\email{yzhkm\symbol{64}163.com}
\address{Jing-Feng Tian\\
College of Science and Technology\\
North China Electric Power University\\
Baoding, Hebei Province, 071051, P. R. China}
\email{tianjf\symbol{64}ncepu.edu.cn}
\subjclass[2010]{Primary 44A10, 26A48; Secondary 33B15, 33C10}
\keywords{Laplace transform, monotonicity rule, psi function, modified
Bessel functions of the second kind}
\thanks{This paper is in final form and no version of it will be submitted
for publication elsewhere.}

\begin{abstract}
\end{abstract}

\begin{abstract}
Let $f$ and $g$ be both continuous functions on $\left( 0,\infty \right) $
with $g\left( t\right) >0$ for $t\in \left( 0,\infty \right) $ and let $%
F\left( x\right) =\mathcal{L}\left( f\right) $, $G\left( x\right) =\mathcal{L%
}\left( g\right) $ be respectively the Laplace transforms of $f$ and $g$
converging for $x>0$. We prove that if there is a $t^{\ast }\in \left(
0,\infty \right) $ such that $f/g$ is strictly increasing on $\left(
0,t^{\ast }\right) $ and strictly decreasing on $\left( t^{\ast },\infty
\right) $, then the ratio $F/G$ is decreasing on $\left( 0,\infty \right) $
if and only if%
\begin{equation*}
H_{F,G}\left( 0^{+}\right) =\lim_{x\rightarrow 0^{+}}\left( \frac{F^{\prime
}\left( x\right) }{G^{\prime }\left( x\right) }G\left( x\right) -F\left(
x\right) \right) \geq 0,
\end{equation*}%
with%
\begin{equation*}
\lim_{x\rightarrow 0^{+}}\frac{F\left( x\right) }{G\left( x\right) }%
=\lim_{t\rightarrow \infty }\frac{f\left( t\right) }{g\left( t\right) }\text{
\ and \ }\lim_{x\rightarrow \infty }\frac{F\left( x\right) }{G\left(
x\right) }=\lim_{t\rightarrow 0^{+}}\frac{f\left( t\right) }{g\left(
t\right) }
\end{equation*}%
provide the indicated limits exist. While $H_{F,G}\left( 0^{+}\right) <0$,
there is at leas one $x^{\ast }>0$ such that $F/G$ is increasing on $\left(
0,x^{\ast }\right) $ and decreasing on $\left( x^{\ast },\infty \right) $.
As applications of this monotonicity rule, a unified treatment for certain
bounds of psi function is presented, and some properties of the modified
Bessel functions of the second are established. These show that the
monotonicity rules in this paper may contribute to study for certain special
functions because many special functions can be expressed as corresponding
Laplace transforms.
\end{abstract}

\maketitle

\section{Introduction}

The Laplace transform of a function $f(t)$ defined on $[0,\infty )$ is the
function $F(s)$, which is a unilateral transform defined by%
\begin{equation*}
F(s)=\int_{0}^{\infty }f\left( t\right) e^{-st}dt,
\end{equation*}%
where $s$ is a complex number frequency parameter. The Laplace transform of
a function $f(t)$ is also denoted by $\mathcal{L}\left( f\right) $.

It is known that some special functions can be represented as corresponding
Laplace transforms, for example, Binet formula for gamma function:%
\begin{equation*}
\ln \Gamma \left( z\right) -\left( z-\frac{1}{2}\right) \ln z+z-\frac{1}{2}%
\ln \left( 2\pi \right) =\int_{0}^{\infty }\left( \frac{1}{e^{t}-1}-\frac{1}{%
t}+\frac{1}{2}\right) \frac{e^{-zt}}{t}dt\text{ \ }\mathbb{R}\left( z\right)
>0
\end{equation*}%
(see \cite[p. 21, Eq. (5)]{Erdelyi-HTF-I-1981}); the integral representation
of the modified Bessel functions of the second (see \cite[p. 181]%
{Watson-ATTBF-CUP-1922})%
\begin{equation}
K_{v}\left( x\right) =\int_{0}^{\infty }e^{-x\cosh t}\cosh \left( vt\right)
dt,  \label{Kv-Ir}
\end{equation}%
which, by replacing $\cosh t-1$ with $t$, can be expressed as%
\begin{equation*}
K_{v}\left( x\right) =e^{-x}\int_{0}^{\infty }e^{-xt}\frac{\cosh \left( v%
\func{arccosh}\left( t+1\right) \right) }{\sqrt{t\left( t+2\right) }}dt;
\end{equation*}%
the Gaussian Q-function \cite{Simon-PDGRV-S-2006} defined by%
\begin{equation*}
Q\left( x\right) =\frac{1}{\sqrt{2\pi }}\int_{x}^{\infty }e^{-t^{2}/2}dt%
\text{ for }x>0\text{,}
\end{equation*}%
which, by a change of variable $t=u+x$, is represented as%
\begin{equation*}
Q\left( x\right) =\frac{1}{\sqrt{2\pi }}e^{-x^{2}/2}\int_{0}^{\infty
}e^{-u^{2}/2}e^{-ux}du.
\end{equation*}%
More examples can be found in \cite{Miller-ITSF-12-2001}, \cite%
{Magnus-FTSFMPS-SV-1966}.

An important notion related to the Laplace transform is the completely
monotonic functions. A function $F$ is said to be completely monotonic on an
interval $I$, if $F$ has derivatives of all orders on $I$ and satisfies%
\begin{equation}
(-1)^{n}F^{(n)}(x)\geq 0\text{ for all }x\in I\text{ and }n=0,1,2,....
\label{cm}
\end{equation}%
If the inequality (\ref{cm}) is strict, then $F$ is said to be strictly
completely monotonic on $I$. The classical Bernstein's theorem \cite%
{Bernstein-AM-52-1929}, \cite{Widder-TAMS-33-1931} states that a function $F$
is completely monotonic (for short, CM) on $(0,\infty )$ if and only if it
is a Laplace transform of some nonnegative measure $\mu $, that is,%
\begin{equation*}
F\left( x\right) =\int_{0}^{\infty }e^{-xt}d\mu \left( t\right) ,
\end{equation*}%
where $\mu \left( t\right) $ is non-decreasing and the integral converges
for $0<x<\infty $.

Another important one is the Bernstein functions \cite{Schilling-BFTA-2010}.
A non-negative function $F$ is said to be a Bernstein function on an
interval $I$, if $F$ has derivatives of all orders on $I$ and satisfies%
\begin{equation}
(-1)^{n}F^{(n)}(x)\leq 0\text{ for all }x\in I\text{ and }n=0,1,2,....
\label{bf}
\end{equation}%
Clearly, a function $F$ is a Bernstein function on $I$ if and only if $%
F^{\prime }$ is CM on $I$.

Very recently, Yang and Tian \cite{Yang-JIA-317-2017} established a
monotonicity rule for the ratio of two Laplace transforms as follows.

\begin{theorem}
\label{T-Rmr}Let the functions $f,g$ be defined on $\left( 0,\infty \right) $
such that their Laplace transforms $\mathcal{L}\left( f\right)
=\int_{0}^{\infty }f\left( t\right) e^{-xt}dt$ and $\mathcal{L}\left(
g\right) =\int_{0}^{\infty }g\left( t\right) e^{-xt}dt$ exist with $g\left(
t\right) \neq 0$ for all $t>0$. Then the ratio $\mathcal{L}\left( f\right) /%
\mathcal{L}\left( g\right) $ is decreasing (increasing) on $\left( 0,\infty
\right) $ if $f/g$ is increasing (decreasing) on $\left( 0,\infty \right) $.
\end{theorem}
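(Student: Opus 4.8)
The plan is to study the sign of the derivative of the ratio. Writing $F=\mathcal{L}(f)$ and $G=\mathcal{L}(g)$, I first note that since $g$ keeps a constant sign on $(0,\infty)$ (it is continuous and never vanishes there), the transform $G$ does not vanish on $(0,\infty)$, so $F/G$ is well defined and differentiable. Because
$$\left(\frac{F}{G}\right)'(x)=\frac{F'(x)G(x)-F(x)G'(x)}{G(x)^{2}},$$
the monotonicity of $F/G$ is governed by the Wronskian-type quantity $W(x):=F'(x)G(x)-F(x)G'(x)$. Thus it suffices to show that $W(x)\le 0$ for all $x>0$ when $f/g$ is increasing; the case in which $f/g$ is decreasing (hence $F/G$ increasing) is entirely symmetric, the only change being a reversal of the inequality.

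First I would differentiate under the integral sign to obtain, for $x>0$,
$$F'(x)=-\int_{0}^{\infty}tf(t)e^{-xt}\,dt,\qquad G'(x)=-\int_{0}^{\infty}tg(t)e^{-xt}\,dt,$$
the extra factor $t$ being harmless thanks to the exponential weight. Substituting these into $W$ and writing the product of the two resulting one-dimensional integrals as a double integral over $(t,s)\in(0,\infty)^{2}$ gives
$$W(x)=\int_{0}^{\infty}\!\!\int_{0}^{\infty}(s-t)\,f(t)g(s)\,e^{-x(t+s)}\,dt\,ds.$$

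The key step is symmetrization. Relabelling $t\leftrightarrow s$ leaves the double integral unchanged, so averaging the original representation with the relabelled one yields
$$W(x)=\frac{1}{2}\int_{0}^{\infty}\!\!\int_{0}^{\infty}(s-t)\bigl[f(t)g(s)-f(s)g(t)\bigr]e^{-x(t+s)}\,dt\,ds.$$
Factoring $f(t)g(s)-f(s)g(t)=g(t)g(s)\bigl[(f/g)(t)-(f/g)(s)\bigr]$, the integrand becomes $\tfrac{1}{2}(s-t)\bigl[(f/g)(t)-(f/g)(s)\bigr]g(t)g(s)e^{-x(t+s)}$. If $f/g$ is increasing, then $s-t$ and $(f/g)(t)-(f/g)(s)$ always carry opposite signs, so their product is $\le 0$; since $g(t)g(s)>0$ and $e^{-x(t+s)}>0$, the entire integrand is $\le 0$. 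Hence $W(x)\le 0$, so $(F/G)'\le 0$ and $F/G$ is decreasing, as claimed.

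The main obstacle is not the algebra but the analytic justification of the two interchanges invoked above: differentiation under the integral sign to compute $F'$ and $G'$, and Fubini's theorem to pass from the product of integrals to the double integral (and back after symmetrization). Since the defining Laplace integrals are only assumed to converge, not necessarily absolutely, I expect to secure both steps by fixing a compact subinterval $[a,b]\subset(0,\infty)$ and exploiting the uniform exponential decay $e^{-a(t+s)}$ to dominate the integrands $tf(t)e^{-xt}$ and $f(t)g(s)e^{-x(t+s)}$, after which dominated convergence and the standard theory of parameter-dependent Laplace integrals apply. Once these justifications are in place, the sign analysis completes the proof.
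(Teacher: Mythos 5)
Your argument is correct, but it is genuinely different from what this paper does: the paper never proves Theorem \ref{T-Rmr} at all, it quotes it from \cite{Yang-JIA-317-2017}, and the machinery it develops for its own theorems (Theorems \ref{MT-1}--\ref{MT-3}) is a discretization scheme --- partition $[a,b]$, replace the Laplace integrals by Riemann sums so that $F/G$ becomes a limit of ratios of polynomials $F_{n}(y)/G_{n}(y)$ in $y=e^{-(b-a)x/n}$, apply the polynomial monotonicity rule (Lemma \ref{L-PA/PB-pm}), and pass to the limit. Your symmetrization proof is the classical Chebyshev-type correlation argument: writing $W=F'G-FG'$ as a double integral and averaging over the swap $t\leftrightarrow s$ gives the integrand $\tfrac12(s-t)\bigl[(f/g)(t)-(f/g)(s)\bigr]g(t)g(s)e^{-x(t+s)}\le 0$, which is shorter, self-contained, and arguably more transparent than the paper's route; what the discretization buys in exchange is that it survives when $f/g$ is only piecewise monotone (increasing then decreasing), where your sign argument collapses because the symmetrized integrand no longer has one sign --- that harder case is precisely the content of Theorems \ref{MT-1}--\ref{MT-2}, which need the boundary quantity $H_{F,G}(0^{+})$. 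One loose end you flagged but did not close: the Fubini and differentiation steps need absolute convergence, which is not automatic for Laplace integrals (the abscissa of absolute convergence can exceed the abscissa of convergence, e.g.\ $f(t)=e^{t}\sin(e^{t})$). Here it is rescued by the hypothesis itself: since $f/g$ is monotone and $g$ has constant sign, $f$ changes sign at most once on $(0,\infty)$, so $\int_{0}^{\infty}|f(t)|e^{-xt}dt$ converges wherever $\int_{0}^{\infty}f(t)e^{-xt}dt$ does; with that observation your domination by $e^{-a(t+s)}$ on compact $x$-intervals goes through and the proof is complete.
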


By using this monotonicity rule, Yang and Tian proved that the function%
\begin{equation*}
x\mapsto \frac{1}{24x\left( \ln \Gamma \left( x+1/2\right) -x\ln x+x-\ln 
\sqrt{2\pi }\right) +1}-\frac{120}{7}x^{2}
\end{equation*}%
is strictly increasing from $\left( 0,\infty \right) $ onto $\left(
1,1860/343\right) $. In another paper \cite{Yang-JMAA-455-2017}, this
monotonicity rule was applied to investigate the monotonicity of the function%
\begin{equation*}
x\mapsto \frac{\psi ^{\left( n+1\right) }\left( x\right) ^{2}}{\psi ^{\left(
n\right) }\left( x\right) \psi ^{\left( n+2\right) }\left( x\right) }
\end{equation*}%
on $\left( 0,\infty \right) $, where $\psi ^{\left( n\right) }$ for $n\in 
\mathbb{N}$ is the polygamma functions, and obtained some new properties of
polygamma functions. These show that Theorem \ref{T-Rmr} is an efficient
tool of studying special functions.

Moreover, as shown in \cite[Remark 4]{Yang-JIA-317-2017}, if $g\left(
t\right) >0$ for all $t>0$, then by Theorem \ref{T-Rmr} and Bernstein's
theorem, both the functions%
\begin{equation*}
x\mapsto \mathcal{L}\left( f\right) -\beta \mathcal{L}\left( g\right) \text{
\ and \ }x\mapsto \alpha \mathcal{L}\left( g\right) -\mathcal{L}\left(
f\right)
\end{equation*}%
are CM on $\left( 0,\infty \right) $, where $\beta =\inf_{x>0}\left( 
\mathcal{L}\left( f\right) /\mathcal{L}\left( g\right) \right) >-\infty $
and $\alpha =\sup_{x>0}\left( \mathcal{L}\left( f\right) /\mathcal{L}\left(
g\right) \right) <\infty $.

Inspired by the above comments, the aim of this paper is to further
establish the monotonicity rule of the ratio of two Laplace transforms $%
\mathcal{L}\left( f\right) /\mathcal{L}\left( g\right) $ under the condition
that there is a $t^{\ast }>0$ such that $f/g$ is increasing (decreasing) on $%
\left( 0,t^{\ast }\right) $ and decreasing (increasing) on $\left( t^{\ast
},\infty \right) $.

The rest of this paper is organized as follows. In Section 2, some lemmas
are given, which containing monotonicity rules for ratios of two power
series (polynomials). In Section 3, our main results (Theorems 1--3) are
proved by means of definition of integral and lemmas. As applications, two
monotonicity results involving psi function and modified Bessel functions of
the second kind are presented.

\section{Lemmas}

To state needed lemmas, we recall a useful auxiliary function $H_{f,g}$,
which was introduced in \cite{Yang-arxiv-1409.6408}. For $-\infty \leq
a<b\leq \infty $, let $f$ and $g$ be differentiable functions on $(a,b)$
with $g^{\prime }\neq 0$ on $(a,b)$. Then we define%
\begin{equation}
H_{f,g}:=\frac{f^{\prime }}{g^{\prime }}g-f.  \label{H_f,g}
\end{equation}%
The auxiliary function $H_{f,g}$ has the following well properties \cite[%
Property 1]{Yang-arxiv-1409.6408}:

(i) $H_{f,g}$ is even with respect to $g$ and odd with respect to $f$, that
is,%
\begin{equation}
H_{f,g}\left( x\right) =H_{f,-g}\left( x\right) =-H_{-f,g}\left( x\right)
=-H_{-f,-g}\left( x\right) .  \label{H-sr}
\end{equation}

(ii) If $g\neq 0$ on $\left( a,b\right) $, then%
\begin{equation}
\left( \frac{f}{g}\right) ^{\prime }=\frac{g^{\prime }}{g^{2}}H_{f,g},
\label{df/g}
\end{equation}%
and therefore,%
\begin{equation}
\func{sgn}\left( \frac{f}{g}\right) ^{\prime }=\func{sgn}\left( g^{\prime
}\right) \func{sgn}\left( H_{f,g}\right) .  \label{sgnd(f/g)}
\end{equation}

(iii) If $f$ and $g$ are twice differentiable on $(a,b)$, then%
\begin{equation}
H_{f,g}^{\prime }=\left( \frac{f^{\prime }}{g^{\prime }}\right) ^{\prime }g.
\label{dH_f,g}
\end{equation}

The auxiliary function $H_{f,g}$ and its properties are very helpful to
investigate those monotonicity of ratios of two functions, see \cite%
{Yang-arxiv-1409.6408}, \cite{Yang-JIA-2016-221}, \cite{Yang-JIA-2016-251}, 
\cite{Yang-JIA-2016-311}, \cite{Lv-JIA-2017-94}, \cite{Yang-MIA-20(3)-2017}, 
\cite{Yang-MIA-20(4)-2017}, \cite{Luo-RM-72-2017}, \cite{Yang-JMI-11-2017}, 
\cite{Yang-JMI-12-2018}. Recently, in \cite{Yang-JMAA-428-2015} they were
successfully applied to establish monotonicity rules for ratios of two power
series and of two polynomials under the condition that the ratio of
coefficients of two power series is increasing (decreasing) then decreasing
(increasing). The following monotonicity rule for ration of two polynomials
will be used in proof of our main results.

\begin{lemma}[{\protect\cite[Theorem 2.5]{Yang-JMAA-428-2015}}]
\label{L-PA/PB-pm}Let $A_{n}\left( t\right) =\sum_{k=0}^{n}a_{k}t^{k}$ and $%
B_{n}\left( t\right) =\sum_{k=0}^{n}b_{k}t^{k}$ be two real polynomials
defined on $\left( 0,r\right) $ ($r>0$) with $b_{k}>0$ for all $0\leq k\leq
n $. Suppose that for certain $m\in \mathbb{N}$ with $m<n$, the sequences $%
\{a_{k}/b_{k}\}_{0\leq k\leq m}$ and $\{a_{k}/b_{k}\}_{m\leq k\leq n}$ are
both non-constants, and are respectively increasing (decreasing) and
decreasing (increasing). Then the function $A_{n}/B_{n}$ is increasing
(decreasing) on $\left( 0,r\right) $ if and only if $H_{A_{n},B_{n}}\left(
r^{-}\right) \geq \left( \leq \right) 0$. While $H_{A_{n},B_{n}}\left(
r^{-}\right) <\left( >\right) 0$, there is a unique $t_{0}\in \left(
0,r\right) $ such that the function $A_{n}/B_{n}$ is increasing (decreasing)
on $\left( 0,t_{0}\right) $ and decreasing (increasing) on $\left(
t_{0},r\right) $.
\end{lemma}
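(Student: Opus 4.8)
The plan is to reduce the whole statement to the sign behaviour of the auxiliary function $H_{A_n,B_n}$ and then to control that sign by climbing a ladder of successive derivatives. First I would note that, since every $b_k>0$, each derivative $B_n^{(j)}(t)=\sum_{k=j}^{n}\frac{k!}{(k-j)!}b_kt^{k-j}$ is strictly positive on $(0,r)$ for $0\le j\le n$; in particular $B_n'>0$. Hence \eqref{sgnd(f/g)} gives $\operatorname{sgn}(A_n/B_n)'=\operatorname{sgn}(B_n')\operatorname{sgn}(H_{A_n,B_n})=\operatorname{sgn}(H_{A_n,B_n})$ throughout $(0,r)$, so everything comes down to locating the sign changes of $H_{A_n,B_n}$.

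The key structural remark is that differentiating $A_n$ and $B_n$ simultaneously merely discards the lowest coefficient ratios. Put $r_k=a_k/b_k$ and $\phi_j:=H_{A_n^{(j)},B_n^{(j)}}$ for $0\le j\le n-1$. Since the factor $k!/(k-j)!$ cancels, the ratio $A_n^{(j)}/B_n^{(j)}$ has coefficient-ratio sequence $(r_j,r_{j+1},\dots,r_n)$. Combining \eqref{dH_f,g} with \eqref{sgnd(f/g)} applied to $A_n^{(j+1)}/B_n^{(j+1)}$ (whose denominator has positive derivative $B_n^{(j+2)}$) yields the ladder relation $\operatorname{sgn}\phi_j'=\operatorname{sgn}\phi_{j+1}$ for $0\le j\le n-2$, while evaluating at the origin gives $\phi_j(0^+)=j!\,b_j\,(r_{j+1}-r_j)$, whose sign is that of $r_{j+1}-r_j$. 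At the top of the ladder $A_n^{(n-1)}/B_n^{(n-1)}$ is a ratio of linear polynomials, and a one-line computation shows $\phi_{n-1}\equiv(n-1)!\,b_{n-1}(r_n-r_{n-1})$, a strictly negative constant because $n-1\ge m$ lies in the decreasing block.

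Next I would run a finite downward induction on $j$ from $n-1$ to $0$. For $m\le j\le n-1$ one has $r_{j+1}-r_j<0$, so $\phi_j(0^+)<0$; since inductively $\phi_{j+1}<0$, the relation $\operatorname{sgn}\phi_j'=\operatorname{sgn}\phi_{j+1}$ forces $\phi_j$ to be strictly decreasing, hence $\phi_j<0$ on all of $(0,r)$, and in particular $\phi_m<0$. For $j=m-1$ we have $\phi_{m-1}(0^+)>0$ (here $r_m>r_{m-1}$) while $\phi_{m-1}$ is strictly decreasing, so it changes sign exactly once, from $+$ to $-$ (or stays positive). For $0\le j\le m-2$ the inductive hypothesis makes $\phi_{j+1}$ positive then negative, so $\phi_j$ increases then decreases; starting from $\phi_j(0^+)>0$ it remains positive on the increasing part and can vanish at most once on the decreasing part. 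Thus $\phi_0=H_{A_n,B_n}$ is positive on an initial interval and, if it turns negative at all, it does so through its final strictly decreasing stretch and stays negative up to $r$.

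Finally I would read off the dichotomy. If $H_{A_n,B_n}$ never vanishes it is positive throughout, so by the first paragraph $A_n/B_n$ is increasing and $H_{A_n,B_n}(r^-)\ge 0$. If it does vanish, the zero $t_0$ is unique, lies on the decreasing tail where $H_{A_n,B_n}$ is strictly decreasing, so $H_{A_n,B_n}(r^-)<0$ and $A_n/B_n$ increases on $(0,t_0)$ and decreases on $(t_0,r)$. These two mutually exclusive cases correspond exactly to $H_{A_n,B_n}(r^-)\ge0$ and $H_{A_n,B_n}(r^-)<0$, which is the claimed equivalence in the increasing case; the decreasing (down-then-up) case follows by symmetry, since replacing $a_k$ by $-a_k$ turns $(r_k)$ into $(-r_k)$ (up-then-down) and, by \eqref{H-sr}, $H_{-A_n,B_n}=-H_{A_n,B_n}$, so the already-settled case applies to $-A_n/B_n$. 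I expect the crux to be the inductive step that propagates the single sign change (from positive to negative) down the ladder while preserving the positive-then-negative shape, together with the boundary bookkeeping --- the values $\phi_j(0^+)$ and the limit $H_{A_n,B_n}(r^-)$ --- that upgrades the qualitative sign pattern to the sharp if-and-only-if criterion; the non-constancy hypotheses are precisely what make the relevant inequalities strict, so that $H_{A_n,B_n}\not\equiv 0$.
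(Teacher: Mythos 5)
First, a point of comparison: the paper never proves this lemma --- it is imported verbatim, with citation, from Yang--Chu--Wang (J.\ Math.\ Anal.\ Appl.\ \textbf{428} (2015), Theorem 2.5), so there is no internal proof to measure you against. Your derivative-ladder argument through the auxiliary function $H$ is very much in the spirit of that source's machinery: the identities (\ref{df/g})--(\ref{dH_f,g}), the boundary values $\phi_j(0^+)=j!\,b_j(r_{j+1}-r_j)$, and the constant top rung $\phi_{n-1}$ are all the right ingredients, and your reduction of the decreasing case via $H_{-A_n,B_n}=-H_{A_n,B_n}$ matches how the present paper handles the mirrored case in its own Theorem 1.

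That said, your write-up has a genuine gap: you systematically assume the \emph{consecutive} coefficient ratios are strictly monotone, which the hypotheses do not give. Since ``non-constant'' is stated as a separate assumption, ``increasing/decreasing'' must be read weakly, and non-constancy of $\{a_k/b_k\}_{m\le k\le n}$ only guarantees a strict inequality \emph{somewhere} in the block --- not at the particular places your induction uses it. Concretely: (i) $\phi_{n-1}\equiv (n-1)!\,b_{n-1}(r_n-r_{n-1})$ need not be strictly negative, since $r_n=r_{n-1}$ is allowed; (ii) the claim $\phi_j(0^+)<0$ for all $m\le j\le n-1$ can fail, and indeed if the sub-block $(r_{m+1},\dots,r_n)$ is constant with value $c$ then $A_n^{(m+1)}=cB_n^{(m+1)}$ and $\phi_{m+1}\equiv 0$, so ``strictly decreasing, hence $\phi_j<0$ on all of $(0,r)$'' collapses; (iii) $\phi_{m-1}(0^+)>0$ can fail when $r_m=r_{m-1}$. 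Your closing remark that ``the non-constancy hypotheses are precisely what make the relevant inequalities strict'' is therefore a misreading of what non-constancy buys. The argument is repairable, and along the lines you intend: run the ladder with weak signs; note that each $\phi_j$ is a polynomial (the Wronskian $(A_n^{(j)})'B_n^{(j)}-A_n^{(j)}(B_n^{(j)})'$) divided by $B_n^{(j+1)}>0$, hence is either identically zero --- which happens precisely when $r_j=\dots=r_n$ is constant --- or has only isolated zeros; and anchor the strict signs at the first strict differences on either side of $m$ (which exist by non-constancy) rather than at the fixed indices $j=n-1$ and $j=m-1$. With that bookkeeping, your positive-then-negative propagation, the dichotomy at $r^{-}$, and the uniqueness of $t_0$ all go through as you describe.
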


The following monotonicity rule will be used in Proposition \ref{P-p1},
which first appeared in \cite[Lemma 6.4]{Belzunce-I:M-E-40-2007} without
giving the details of the proof. Two strict proofs were given in \cite%
{Yang-JMAA-428-2015} and \cite{Xia-PJAM-7(2)-2016}.

\begin{lemma}[{\protect\cite[Corollary 2.6]{Yang-JMAA-428-2015}}]
\label{L-A/B-g}Let $A\left( t\right) =\sum_{k=0}^{\infty }a_{k}t^{k}$ and $%
B\left( t\right) =\sum_{k=0}^{\infty }b_{k}t^{k}$ be two real power series
converging on $\mathbb{R}$ with $b_{k}>0$ for all $k$. If for certain $m\in 
\mathbb{N}$, the non-constant sequences $\{a_{k}/b_{k}\}_{0\leq k\leq m}$
and $\{a_{k}/b_{k}\}_{k\geq m}$ are respectively increasing (decreasing) and
decreasing (increasing), then there is a unique $t_{0}\in \left( 0,\infty
\right) $ such that the function $A/B$ is increasing (decreasing) on $\left(
0,t_{0}\right) $ and decreasing (increasing) on $\left( t_{0},\infty \right) 
$.
\end{lemma}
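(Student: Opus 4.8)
The plan is to deduce this power-series statement from the polynomial version in Lemma \ref{L-PA/PB-pm} by a truncation-and-limit argument, using the auxiliary function $H_{A,B}$ from (\ref{H_f,g}) to control the sign of $(A/B)'$ throughout. Write $r_k=a_k/b_k$ and, for $t>0$, set $D=A'B-AB'$, so that $(A/B)'=D/B^2$ and, since $B(t)=\sum b_kt^k>0$ and $B'(t)=\sum kb_kt^{k-1}>0$ on $(0,\infty)$, the relation (\ref{sgnd(f/g)}) shows that the sign of $(A/B)'$ agrees with that of $H_{A,B}=D/B'$ and hence with that of $D$. It therefore suffices to prove that $D$ is positive on an initial interval $(0,t_0)$ and negative on $(t_0,\infty)$ for a unique $t_0\in(0,\infty)$. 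Replacing $A$ by $-A$, which by (\ref{H-sr}) merely flips the sign of $H_{A,B}$, I may assume the first case, i.e.\ $\{r_k\}_{0\le k\le m}$ strictly increasing and $\{r_k\}_{k\ge m}$ strictly decreasing; note $m\ge1$, since a one-term sequence is constant.

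Next I would truncate. For $n\ge m+1$ put $A_n=\sum_{k=0}^{n}a_kt^k$, $B_n=\sum_{k=0}^{n}b_kt^k$ and $D_n=A_n'B_n-A_nB_n'$. The finite sequences $\{r_k\}_{0\le k\le m}$ and $\{r_k\}_{m\le k\le n}$ are non-constant and respectively increasing and decreasing, so Lemma \ref{L-PA/PB-pm} applies on every $(0,r)$. A direct computation gives the lowest coefficient of $D_n$ as $b_0b_1(r_1-r_0)>0$ and its top coefficient as $b_{n-1}b_n(r_n-r_{n-1})<0$; thus $D_n>0$ near $0$ while $D_n\to-\infty$ as $t\to\infty$, so $H_{A_n,B_n}(r^-)=D_n(r)/B_n'(r)<0$ for all large $r$. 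Lemma \ref{L-PA/PB-pm} then yields a unique $t_n\in(0,\infty)$, the unique positive zero of $D_n$, with $A_n/B_n$ increasing on $(0,t_n)$ and decreasing on $(t_n,\infty)$. Since $A_n,B_n$ and their derivatives converge to $A,B,A',B'$ uniformly on compact subsets of $(0,\infty)$, we have $D_n\to D$ locally uniformly; as the low-order coefficients of $D_n$ already equal those of $D$, one gets a $\delta>0$ with $D_n\ge \tfrac12 b_0b_1(r_1-r_0)>0$ on $(0,\delta]$ for all large $n$, whence $t_n\ge\delta$ uniformly.

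The main obstacle is the matching upper bound on $t_n$, equivalently the exclusion of the possibility that $A/B$ is non-decreasing on all of $(0,\infty)$. Put $L=\lim_{k\to\infty}r_k\in[-\infty,r_m)$. The point is that $A/B$ must overshoot $L$ before returning to it. If $L$ is finite, then since $r_k\to L$ and $b_k>0$ with $\sum b_kt^k$ entire, a standard estimate gives $\lim_{t\to\infty}A/B=L$; moreover $A-LB=\sum_k(r_k-L)b_kt^k$ has every coefficient with $k\ge m$ strictly positive, so its tail—an entire function dominating the finite block $\sum_{k<m}$—forces $A-LB\to+\infty$, i.e.\ $A/B>L$ for all large $t$. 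Thus $A/B$ exceeds its own limit and cannot be non-decreasing; if instead $L=-\infty$ one checks directly that $A/B\to-\infty$, again excluding monotonicity. Hence $D(T_0)<0$ for some $T_0$, and from $D_n(T_0)\to D(T_0)<0$ we obtain $t_n\le T_0$ for large $n$.

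Finally I would conclude. With $\{t_n\}\subset[\delta,T_0]$, any subsequential limit $t_0$ satisfies, by local uniform convergence, $D\ge0$ on $(0,t_0)$ and $D\le0$ on $(t_0,\infty)$: indeed for fixed $t<t_0$ one has $t<t_{n_j}$ eventually, giving $D_{n_j}(t)\ge0$, and symmetrically for $t>t_0$. Since $D$ is real-analytic and, by the non-constancy hypothesis, not identically zero on either side, two distinct limits would force $D\equiv0$ on an interval and hence everywhere, a contradiction; therefore $t_n\to t_0$ is determined uniquely, $D>0$ on $(0,t_0)$ and $D<0$ on $(t_0,\infty)$ off an isolated set, and so $A/B$ is strictly increasing on $(0,t_0)$ and strictly decreasing on $(t_0,\infty)$. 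The decreasing-then-increasing case follows by applying this to $-A$. The hard part is the a priori upper bound on $t_n$, equivalently the overshoot $\sup_t A/B>\lim_{t\to\infty}A/B$; the remainder is bookkeeping around Lemma \ref{L-PA/PB-pm} together with a normal-families style passage to the limit.
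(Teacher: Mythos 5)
Your argument is essentially sound, but first a point of comparison you could not have known: the paper does not prove Lemma \ref{L-A/B-g} at all --- it is quoted from \cite[Corollary 2.6]{Yang-JMAA-428-2015}, where it is deduced from a power-series analogue of Lemma \ref{L-PA/PB-pm} by analyzing $H_{A,B}$ directly on $(0,r)$ and letting $r\rightarrow \infty $. What you have built instead is a truncation-and-limit proof: apply the polynomial Lemma \ref{L-PA/PB-pm} to $A_{n}/B_{n}$, pin the peaks $t_{n}$ into a fixed compact interval $[\delta ,T_{0}]$, and pass to the limit using local uniform convergence plus real-analyticity of $D=A^{\prime }B-AB^{\prime }$ to force uniqueness of $t_{0}$. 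Amusingly, this is the very device the paper itself uses in the opposite direction to prove Theorem \ref{MT-1} (there Riemann sums discretize the integrals into the polynomial lemma; here you truncate the series), so your route is a genuine, self-contained alternative to the cited source. Its one substantive new ingredient --- the a priori upper bound $t_{n}\leq T_{0}$ via the overshoot $A/B>L=\lim_{t\rightarrow \infty }A(t)/B(t)$ for large $t$, with the cases $L$ finite and $L=-\infty $ treated separately --- is correct: since all tail coefficients of $A-LB$ from index $m$ on are nonnegative and $r_{m}>L$, the entire-function tail dominates the finite block, while $r_{k}\rightarrow L$ with $b_{k}>0$ gives $A/B\rightarrow L$, so $D<0$ somewhere and $D_{n}(T_{0})<0$ for large $n$.

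One caveat you should repair: you silently strengthen the hypothesis to \emph{strict} monotonicity of $\{a_{k}/b_{k}\}$ on each side, whereas the lemma assumes only monotone and non-constant. Your two explicit coefficient formulas then fail: the lowest coefficient $b_{0}b_{1}(r_{1}-r_{0})$ of $D_{n}$ and the top coefficient $b_{n-1}b_{n}(r_{n}-r_{n-1})$ may both vanish. Both are patchable. Near $0$, if $i^{\ast }$ is the first index with $r_{i^{\ast }}>r_{i^{\ast }-1}$, the first nonzero coefficient of $D$ is $i^{\ast }b_{i^{\ast }}b_{0}(r_{i^{\ast }}-r_{0})>0$, which restores $t_{n}\geq \delta $. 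At the top, first choose $n\geq k^{\ast }$ where $r_{k^{\ast }}<r_{m}$ so that the truncated tail is genuinely non-constant (this is also needed to invoke Lemma \ref{L-PA/PB-pm} at all); then, writing $[q,n]$ with $q>m$ for the maximal terminal block on which $r_{k}$ is constant, the coefficient of $t^{n+q-2}$ in $D_{n}$ equals $(n-q+1)b_{n}b_{q-1}(r_{n}-r_{q-1})<0$ and all higher coefficients vanish, so $D_{n}\rightarrow -\infty $ and $H_{A_{n},B_{n}}(r^{-})<0$ for large $r$ still holds. Your overshoot argument likewise survives non-strictness since $r_{m}>L$. With these local repairs the proof is complete and constitutes a valid alternative to the proof in \cite{Yang-JMAA-428-2015}.
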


The following lemma \cite[Lemma 2]{Yang-arxiv-1705-05704} offers a simple
criterion to determine the sign of a class of special series, which will be
used in proof of Proposition \ref{P-B1}.

\begin{lemma}[{\protect\cite[Lemma 2]{Yang-arxiv-1705-05704}}]
\label{L-sgnS}Let $\{a_{k}\}_{k=0}^{\infty }$ be a nonnegative real sequence
with $a_{m}>0$ and $\sum_{k=m+1}^{\infty }a_{k}>0$ and let%
\begin{equation*}
S\left( t\right) =-\sum_{k=0}^{m}a_{k}t^{k}+\sum_{k=m+1}^{\infty }a_{k}t^{k}
\end{equation*}%
be a convergent power series on the interval $\left( 0,r\right) $ ($r>0$).
(i) If $S\left( r^{-}\right) \leq 0$, then $S\left( t\right) <0$ for all $%
t\in \left( 0,r\right) $. (ii) If $S\left( r^{-}\right) >0$, then there is a
unique $t_{0}\in \left( 0,r\right) $ such that $S\left( t\right) <0$ for $%
t\in \left( 0,t_{0}\right) $ and $S\left( t\right) >0$ for $t\in \left(
t_{0},r\right) $.
\end{lemma}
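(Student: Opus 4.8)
The plan is to exploit the fact that the coefficient sequence of $S$ changes sign exactly once, passing from the nonpositive block $-a_{0},\dots ,-a_{m}$ to the nonnegative block $a_{m+1},a_{m+2},\dots $, and to convert this single change of sign of the coefficients into a single change of sign of $S$ on $\left( 0,r\right) $. To this end I would split $S=Q-P$, where $P\left( t\right) =\sum_{k=0}^{m}a_{k}t^{k}$ and $Q\left( t\right) =\sum_{k=m+1}^{\infty }a_{k}t^{k}$. Since $a_{m}>0$ and all $a_{k}\geq 0$, the polynomial $P$ satisfies $P\left( t\right) >0$ on $\left( 0,r\right] $, while the hypothesis $\sum_{k\geq m+1}a_{k}>0$ gives $Q\left( t\right) >0$ on $\left( 0,r\right) $. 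As $t\rightarrow 0^{+}$ the lowest-order surviving term of $S$ is $-a_{j}t^{j}$ with $j\leq m$ the smallest index for which $a_{j}>0$, so $S\left( t\right) <0$ for all sufficiently small $t>0$.

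The core step, and the one I expect to be the crux, is to show that $S$ can cross zero at most once on $\left( 0,r\right) $, and only upward. I would prove that at every interior zero the derivative is strictly positive. Indeed, if $S\left( t\right) =0$ for some $t\in \left( 0,r\right) $, then $\sum_{k\geq m+1}a_{k}t^{k}=\sum_{k=0}^{m}a_{k}t^{k}=:c>0$, and multiplying $S^{\prime }$ by $t$ gives
\begin{equation*}
tS^{\prime }\left( t\right) =\sum_{k=m+1}^{\infty }k\,a_{k}t^{k}-\sum_{k=0}^{m}k\,a_{k}t^{k}\geq \left( m+1\right) \sum_{k=m+1}^{\infty }a_{k}t^{k}-m\sum_{k=0}^{m}a_{k}t^{k}=\left( m+1\right) c-mc=c>0.
\end{equation*}
Thus $S^{\prime }\left( t\right) >0$ whenever $S\left( t\right) =0$. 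This forces the zeros of $S$ to be isolated and the sign to pass only from $-$ to $+$: if $S$ had two zeros $t_{1}<t_{2}$, then $S$ would be positive just to the right of $t_{1}$ and would approach $0$ from above at $t_{2}$, which would require $S^{\prime }\left( t_{2}\right) \leq 0$, contradicting the displayed inequality.

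With this single-upward-crossing property in hand, both parts follow by tracking the boundary value $S\left( r^{-}\right) $. For part (ii), since $S\left( 0^{+}\right) <0$ and $S\left( r^{-}\right) >0$ (the limit being either finite or $+\infty $), continuity of the real-analytic function $S$ yields a zero, which by the above is the unique $t_{0}\in \left( 0,r\right) $, and the sign pattern $S<0$ on $\left( 0,t_{0}\right) $, $S>0$ on $\left( t_{0},r\right) $ is then immediate. For part (i), I would argue by contradiction: if $S$ attained a nonnegative value somewhere in $\left( 0,r\right) $, then starting from $S\left( 0^{+}\right) <0$ there would be a first zero $t_{0}$ with $S^{\prime }\left( t_{0}\right) >0$, whence $S>0$ on $\left( t_{0},r\right) $ and so $S\left( r^{-}\right) >0$, contradicting $S\left( r^{-}\right) \leq 0$; therefore $S<0$ throughout $\left( 0,r\right) $. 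The only point demanding care is the boundary behavior at $r$, where $Q\left( r^{-}\right) $ may diverge; this is harmless, because in case (i) the assumption $S\left( r^{-}\right) \leq 0$ already forces $Q\left( r^{-}\right) $ to be finite, and in case (ii) a divergent $Q\left( r^{-}\right) $ only reinforces the strict positivity of $S\left( r^{-}\right) $.
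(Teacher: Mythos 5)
The paper does not actually prove this lemma --- it imports it verbatim from \cite[Lemma 2]{Yang-arxiv-1705-05704} --- so your proposal can only be judged against the standard proof in that source, which runs differently: one observes that $t\mapsto S\left( t\right) /t^{m}$ has derivative $\sum_{k\neq m}\left\vert k-m\right\vert a_{k}t^{k-m-1}\geq 0$ (strictly positive somewhere, since $\sum_{k\geq m+1}a_{k}>0$), hence is strictly increasing on $\left( 0,r\right) $, is negative near $0$, and has the same sign as $S$; both parts then fall out by tracking the monotone quotient up to $r^{-}$. Your route --- showing $tS^{\prime }\left( t\right) \geq \left( m+1\right) c-mc=c>0$ at any zero of $S$, so that $S$ can only cross zero once and only upward --- is a legitimate alternative, and the crossing computation itself is correct.

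However, there is one genuine soft spot, in part (i). From ``$S>0$ on $\left( t_{0},r\right) $'' you conclude ``$S\left( r^{-}\right) >0$,'' but that implication is false for a general continuous function: a function positive on an open interval can tend to $0$ at the right endpoint (e.g. $r-t$), and $S\left( r^{-}\right) =0$ is perfectly compatible with the hypothesis $S\left( r^{-}\right) \leq 0$, so your contradiction does not yet close. The fix stays entirely within your framework: your displayed inequality holds not only at zeros but at every point where $S\left( t\right) \geq 0$, since there $\sum_{k\geq m+1}a_{k}t^{k}\geq \sum_{k=0}^{m}a_{k}t^{k}$ and hence $tS^{\prime }\left( t\right) \geq \left( m+1\right) \sum_{k\geq m+1}a_{k}t^{k}-m\sum_{k=0}^{m}a_{k}t^{k}\geq \sum_{k=0}^{m}a_{k}t^{k}\geq a_{m}t^{m}>0$. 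Thus once $S$ reaches zero at $t_{0}$ it is strictly increasing on $\left[ t_{0},r\right) $, so $S\left( r^{-}\right) \geq S\left( t^{\prime }\right) >0$ for any $t^{\prime }\in \left( t_{0},r\right) $, which is the strict positivity you need. (This strengthened observation also makes the two-zeros exclusion in your middle paragraph immediate.) A last cosmetic point: in part (ii) you write ``$S\left( 0^{+}\right) <0$,'' which fails when $a_{0}=0$; what you actually established and need is $S\left( t\right) <0$ for all sufficiently small $t>0$, so phrase it that way.
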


\begin{remark}
Clearly, when $r=\infty $ in Lemma \ref{L-sgnS}, there is a unique $t_{0}\in
\left( 0,\infty \right) $ such that $S\left( t\right) <0$ for $t\in \left(
0,t_{0}\right) $ and $S\left( t\right) >0$ for $t\in \left( t_{0},\infty
\right) $. This result appeared in \cite[Lemma 6.3]{Belzunce-I:M-E-40-2007}
without proof (see also \cite{Yang-JIA-2015-157}, \cite{Yang-JIA-2015-299}).
If $a_{k}=0$ for $k\geq n\geq m+1$, then Lemma \ref{L-sgnS} is reduced to a
polynomial version, which appeared in \cite{Yang-AAA-2014-702718} (see also 
\cite{Yang-JMI-12-2018}).
\end{remark}

\section{Main results}

We are in a position to state and prove our main results.

\begin{theorem}
\label{MT-1}For $0<a<b<\infty $, let the functions $F$ and $G$ be defined on 
$\left( 0,\infty \right) $ by%
\begin{equation*}
F\left( x\right) =\int_{a}^{b}f\left( t\right) e^{-xt}dt\text{ \ and \ }%
G\left( x\right) =\int_{a}^{b}g\left( t\right) e^{-xt}dt,
\end{equation*}%
where the functions $f,g$ are both continuous on $\left[ a,b\right] $ with $%
g\left( t\right) >0$ for $t\in \left[ a,b\right] $. If there is $t^{\ast
}\in \left( a,b\right) $ such that $f/g$ is strictly increasing (decreasing)
on $\left[ a,t^{\ast }\right] $ and strictly decreasing (increasing) on $%
\left[ t^{\ast },b\right] $, then the ratio $x\mapsto F\left( x\right)
/G\left( x\right) $ is decreasing (increasing) on $\left( 0,\infty \right) $
if and only if%
\begin{equation*}
H_{F,G}\left( 0^{+}\right) =\lim_{x\rightarrow 0^{+}}\left( \frac{F^{\prime
}\left( x\right) }{G^{\prime }\left( x\right) }G\left( x\right) -F\left(
x\right) \right) \geq \left( \leq \right) 0,
\end{equation*}%
with%
\begin{equation}
\lim_{x\rightarrow 0^{+}}\frac{F\left( x\right) }{G\left( x\right) }=\frac{%
\int_{a}^{b}f\left( t\right) dt}{\int_{a}^{b}g\left( t\right) dt}\text{ \
and \ }\lim_{x\rightarrow \infty }\frac{F\left( x\right) }{G\left( x\right) }%
=\frac{f\left( a\right) }{g\left( a\right) }.  \label{F/G-0,00}
\end{equation}%
While $H_{F,G}\left( 0^{+}\right) <\left( >\right) 0$, then there is at
least one $x^{\ast }>0$ such that $F/G$ is increasing (decreasing) on $%
\left( 0,x^{\ast }\right) $ and decreasing (increasing) on $\left( x^{\ast
},\infty \right) $.
\end{theorem}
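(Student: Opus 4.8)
The plan is to reduce the continuous statement to the polynomial rule of Lemma \ref{L-PA/PB-pm} by a Riemann-sum discretization, and then pass to the limit. Throughout I treat the increasing-then-decreasing case; the other case follows by replacing $f$ with $-f$, which sends $f/g$ to its negative, $F/G$ to $-F/G$, and $H_{F,G}$ to $-H_{F,G}$ by \eqref{H-sr}. First I would record two preliminaries. Since $e^{-xt}\to 1$ uniformly on $[a,b]$ as $x\to 0^{+}$, one gets $F(0^{+})=\int_a^b f$ and $G(0^{+})=\int_a^b g$, which is the first limit in \eqref{F/G-0,00}; the second follows because, as $x\to\infty$, both integrals are dominated by the left endpoint $t=a$ (Laplace's method), giving $F/G\to f(a)/g(a)$. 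Next, differentiating under the integral sign gives $G'(x)=-\int_a^b t\,g(t)e^{-xt}\,dt<0$, so by \eqref{sgnd(f/g)} the sign of $(F/G)'$ is exactly the opposite of that of $H_{F,G}$; hence ``$F/G$ decreasing'' is equivalent to ``$H_{F,G}\geq 0$ on $(0,\infty)$'', and the whole problem becomes one about the sign pattern of $H_{F,G}$.

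For the discretization, partition $[a,b]$ into $n$ equal parts with nodes $t_k=a+k(b-a)/n$ and set $q=e^{-x(b-a)/n}\in(0,1)$. The Riemann sums $F_n(x)=\tfrac{b-a}{n}\sum_k f(t_k)e^{-xt_k}$ and $G_n$ satisfy $F_n/G_n=A_n(q)/B_n(q)$, where $A_n(q)=\sum_k f(t_k)q^k$ and $B_n(q)=\sum_k g(t_k)q^k$, the common factor $\tfrac{b-a}{n}e^{-ax}$ cancelling. Here $b_k=g(t_k)>0$ and $a_k/b_k=f(t_k)/g(t_k)$, so by strict monotonicity of $f/g$ on each side of $t^{\ast}$ the sequence $\{a_k/b_k\}$ is strictly unimodal: increasing for $k\leq m$ and decreasing for $k\geq m$, with $m$ the index of its maximum. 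Lemma \ref{L-PA/PB-pm} (with $r=1$) then applies: $A_n/B_n$ is increasing on $(0,1)$ iff $H_{A_n,B_n}(1^{-})\geq 0$, and otherwise is unimodal with a unique peak $q_0(n)$. Because $x\mapsto q$ is strictly decreasing, a chain-rule computation using \eqref{df/g} yields $\operatorname{sgn}H_{F_n,G_n}(x)=\operatorname{sgn}H_{A_n,B_n}(q)$; translating the two cases back to $x$ shows $F_n/G_n$ is either decreasing on $(0,\infty)$ or unimodal (increasing then decreasing) with turning point $x_0(n)$, the alternative being governed by $\operatorname{sgn}H_{A_n,B_n}(1^{-})=\operatorname{sgn}H_{F_n,G_n}(0^{+})$.

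Finally I would let $n\to\infty$. Since $f(t)e^{-xt}$ and $t\,f(t)e^{-xt}$ are continuous on $[a,b]$, the sums $F_n,G_n,F_n',G_n'$ converge to $F,G,F',G'$ uniformly on compact subsets of $(0,\infty)$, and also at $0^{+}$; in particular $H_{F_n,G_n}\to H_{F,G}$ locally uniformly and $H_{F_n,G_n}(0^{+})\to H_{F,G}(0^{+})=\frac{\int_a^b tf}{\int_a^b tg}\int_a^b g-\int_a^b f$. A locally uniform limit of monotone (resp. unimodal) functions is monotone (resp. weakly unimodal), so $F/G$ is either decreasing or admits at least one $x^{\ast}$ with the stated pattern, the case being detected by the sign of $H_{F,G}(0^{+})$ through the limit of the preceding sign relation.

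The main obstacle is precisely this limit passage: one must show the discrete dichotomy survives the limit, controlling whether the peaks $x_0(n)$ stay bounded away from $0$ and from $\infty$. A peak collapsing to $0$ would force $(F/G)'(0^{+})\leq 0$, i.e. $H_{F,G}(0^{+})\geq 0$, contradicting the assumption $H_{F,G}(0^{+})<0$; escape to $\infty$ is ruled out by the $\infty$-asymptotics, which show $F/G$ approaches $f(a)/g(a)$ from above and is therefore eventually decreasing. The boundary case $H_{F,G}(0^{+})=0$ is the other delicate point, since there the sign of $H_{A_n,B_n}(1^{-})$ is not fixed by the limit alone and must be handled by continuity against the two structural alternatives. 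The loss of uniqueness of the peak under a locally uniform limit is exactly what forces the weaker conclusion ``at least one $x^{\ast}$'' rather than a unique turning point, while the forward implication of the equivalence is the easy direction, coming directly from $(F/G)'(0^{+})\leq 0$.
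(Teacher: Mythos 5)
Your proposal is correct and follows essentially the same route as the paper's proof: the same reduction to the case $f/g$ increasing--decreasing via $-f$ and (\ref{H-sr}), the same uniform-partition discretization turning $F/G$ into a ratio of polynomials in $q=e^{-(b-a)x/n}$, the same appeal to Lemma \ref{L-PA/PB-pm} with the unimodal coefficient ratios $f(t_k)/g(t_k)$, the same sign transfer between $H_{F_n,G_n}$ and $H_{F,G}$ (your direct chain-rule version is equivalent to the paper's computation with $C_{f,g}^{[n]}$ and the negativity of $G'$ and $G'+aG$), and the same contradiction excluding a peak collapsing to $0$. Your only deviations are local and equivalent in substance: you rule out escape of the peak to $\infty$ by Laplace asymptotics (showing $F/G$ approaches $f(a)/g(a)$ from above, using strict increase of $f/g$ just right of $a$), where the paper instead contradicts $F_n(0)/G_n(0)=f(a)/g(a)$ against the bound (\ref{Fn/Gn<}), and you obtain the limits (\ref{F/G-0,00}) directly rather than through the paper's interchange of the limits in $n$ and $x$.
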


\begin{proof}[Proof of Theorem \protect\ref{MT-1}]
We only prove this theorem under the condition that $f/g$ is strictly
increasing on $\left[ a,t^{\ast }\right] $ and strictly decreasing on $\left[
t^{\ast },b\right] $. If $f/g$ is strictly decreasing on $\left[ a,t^{\ast }%
\right] $ and strictly increasing on $\left[ t^{\ast },b\right] $, then $%
\left( -f\right) /g$ is strictly increasing on $\left[ a,t^{\ast }\right] $
and strictly decreasing on $\left[ t^{\ast },b\right] $, and then
corresponding conclusion of this theorem is also true, which suffices to
note that $H_{-F,G}\left( x\right) =-H_{F,G}\left( x\right) $ due to (\ref%
{H-sr}).

For $n\in \mathbb{N}$, given a partition of the interval $\left[ a,b\right] $%
: 
\begin{equation*}
a=t_{0}<t_{1}<t_{2}<\cdot \cdot \cdot <t_{n}=b,
\end{equation*}%
with $\Delta t_{i}=t_{i}-t_{i-1}=\left( b-a\right) /n$, and so $%
t_{i}=a+\left( b-a\right) i/n$. Then we have%
\begin{eqnarray*}
\sum_{i=0}^{n-1}f\left( t_{i}\right) e^{-xt_{i}}\Delta t_{i}
&=&\sum_{i=0}^{n-1}f\left( t_{i}\right) e^{-ax}\left[ e^{-\left( b-a\right)
x/n}\right] ^{i}\frac{b-a}{n} \\
&=&\frac{b-a}{n}e^{-ax}\sum_{i=0}^{n-1}f\left( t_{i}\right) y^{i}=:\frac{b-a%
}{n}e^{-ax}F_{n}\left( y\right) ,
\end{eqnarray*}%
\begin{equation*}
\sum_{i=0}^{n-1}g\left( t_{i}\right) e^{-xt_{i}}\Delta t_{i}=\frac{b-a}{n}%
e^{-ax}\sum_{i=0}^{n-1}g\left( t_{i}\right) y^{i}:=\frac{b-a}{n}%
e^{-ax}G_{n}\left( y\right) ,
\end{equation*}%
where $y\equiv y_{n}\left( x\right) =e^{-\left( b-a\right) x/n}\in \left(
0,1\right) $. These imply that%
\begin{equation*}
\frac{\sum_{i=0}^{n-1}f\left( t_{i}\right) e^{-xt_{i}}\Delta t_{i}}{%
\sum_{i=0}^{n-1}g\left( t_{i}\right) e^{-xt_{i}}\Delta t_{i}}=\frac{%
\sum_{i=0}^{n-1}f\left( t_{i}\right) y^{i}}{\sum_{i=0}^{n-1}g\left(
t_{i}\right) y^{i}}=\frac{F_{n}\left( y\right) }{G_{n}\left( y\right) },
\end{equation*}%
\begin{equation}
\frac{F\left( x\right) }{G\left( x\right) }=\frac{\int_{a}^{b}f\left(
t\right) e^{-xt}dt}{\int_{a}^{b}g\left( t\right) e^{-xt}dt}=\frac{%
\lim_{n\rightarrow \infty }\sum_{i=0}^{n-1}f\left( t_{i}\right)
e^{-xt_{i}}\Delta t_{i}}{\lim_{n\rightarrow \infty }\sum_{i=0}^{n-1}g\left(
t_{i}\right) e^{-xt_{i}}\Delta t_{i}}=\lim_{n\rightarrow \infty }\frac{%
F_{n}\left( y\right) }{G_{n}\left( y\right) }.  \label{limFn/Gn}
\end{equation}%
Also, we have%
\begin{equation*}
\sum_{i=0}^{n-1}\left( t_{i}-a\right) f\left( t_{i}\right) e^{-xt_{i}}\Delta
t_{i}=\sum_{i=0}^{n-1}\frac{\left( b-a\right) i}{n}f\left( t_{i}\right)
e^{-ax}\left[ e^{-\left( b-a\right) x/n}\right] ^{i}\frac{b-a}{n}
\end{equation*}%
\begin{equation*}
=e^{-ax}\left( \frac{b-a}{n}\right) ^{2}y\sum_{i=0}^{n-1}if\left(
t_{i}\right) y^{i-1}=e^{-ax}\left( \frac{b-a}{n}\right) ^{2}yF_{n}^{\prime
}\left( y\right) ,
\end{equation*}%
\begin{equation*}
\sum_{i=0}^{n-1}\left( t_{i}-a\right) g\left( t_{i}\right) e^{-xt_{i}}\Delta
t_{i}=e^{-ax}\left( \frac{b-a}{n}\right) ^{2}yG_{n}^{\prime }\left( y\right)
.
\end{equation*}%
Therefore, we obtain%
\begin{equation*}
\frac{\sum\limits_{i=0}^{n-1}\left( t_{i}-a\right) f\left( t_{i}\right)
e^{-xt_{i}}\Delta t_{i}}{\sum\limits_{i=0}^{n-1}\left( t_{i}-a\right)
g\left( t_{i}\right) e^{-xt_{i}}\Delta t_{i}}=\frac{F_{n}^{\prime }\left(
y\right) }{G_{n}^{\prime }\left( y\right) },
\end{equation*}%
\begin{equation}
\frac{\int_{a}^{b}\left( t-a\right) f\left( t\right) e^{-xt}dt}{%
\int_{a}^{b}\left( t-a\right) g\left( t\right) e^{-xt}dt}=\frac{%
\lim_{n\rightarrow \infty }\sum\limits_{i=0}^{n-1}\left( t_{i}-a\right)
f\left( t_{i}\right) e^{-xt_{i}}\Delta t_{i}}{\lim_{n\rightarrow \infty
}\sum\limits_{i=0}^{n-1}\left( t_{i}-a\right) g\left( t_{i}\right)
e^{-xt_{i}}\Delta t_{i}}=\lim_{n\rightarrow \infty }\frac{F_{n}^{\prime
}\left( y\right) }{G_{n}^{\prime }\left( y\right) }.  \label{ldFn/dGn}
\end{equation}%
Since%
\begin{eqnarray*}
\int_{a}^{b}\left( t-a\right) f\left( t\right) e^{-xt}dt
&=&\int_{a}^{b}tf\left( t\right) e^{-xt}dt-a\int_{a}^{b}f\left( t\right)
e^{-xt}dt=-F^{\prime }\left( x\right) -aF\left( x\right) , \\
\int_{a}^{b}\left( t-a\right) g\left( t\right) e^{-xt}dt
&=&\int_{a}^{b}tg\left( t\right) e^{-xt}dt-a\int_{a}^{b}g\left( t\right)
e^{-xt}dt=-G^{\prime }\left( x\right) -aG\left( x\right) ,
\end{eqnarray*}%
equation (\ref{ldFn/dGn}) also can be written as%
\begin{equation}
\frac{F^{\prime }\left( x\right) +aF\left( x\right) }{G^{\prime }\left(
x\right) +aG\left( x\right) }=\lim_{n\rightarrow \infty }\frac{F_{n}^{\prime
}\left( y\right) }{G_{n}^{\prime }\left( y\right) }.  \label{ldFn/dGn-a}
\end{equation}%
It then follows that%
\begin{equation*}
H_{F_{n},G_{n}}\left( y\right) =\frac{F_{n}^{\prime }\left( y\right) }{%
G_{n}^{\prime }\left( y\right) }G_{n}\left( y\right) -F_{n}\left( y\right) =%
\frac{\sum\limits_{i=0}^{n-1}\left( t_{i}-a\right) f\left( t_{i}\right)
e^{-xt_{i}}\Delta t_{i}}{\sum\limits_{i=0}^{n-1}\left( t_{i}-a\right)
g\left( t_{i}\right) e^{-xt_{i}}\Delta t_{i}}\sum_{i=0}^{n-1}g\left(
t_{i}\right) y^{i}-\sum_{i=0}^{n-1}f\left( t_{i}\right) y^{i}
\end{equation*}%
\begin{eqnarray*}
&=&\frac{ne^{ax}}{b-a}\left( \frac{\sum_{i=0}^{n-1}\left( t_{i}-a\right)
f\left( t_{i}\right) e^{-xt_{i}}\Delta t_{i}}{\sum_{i=0}^{n-1}\left(
t_{i}-a\right) g\left( t_{i}\right) e^{-xt_{i}}\Delta t_{i}}\sum_{i=0}^{n-1}%
\frac{b-a}{n}e^{-ax}g\left( t_{i}\right) y^{i}-\sum_{i=0}^{n-1}\frac{b-a}{n}%
e^{-ax}f\left( t_{i}\right) y^{i}\right) \\
&=&\frac{ne^{ax}}{b-a}\left( \frac{\sum_{i=0}^{n-1}\left( t_{i}-a\right)
f\left( t_{i}\right) e^{-xt_{i}}\Delta t_{i}}{\sum_{i=0}^{n-1}\left(
t_{i}-a\right) g\left( t_{i}\right) e^{-xt_{i}}\Delta t_{i}}%
\sum_{i=0}^{n-1}g\left( t_{i}\right) e^{-xt_{i}}\Delta
t_{i}-\sum_{i=0}^{n-1}f\left( t_{i}\right) e^{-xt_{i}}\Delta t_{i}\right) \\
&:&=\frac{ne^{ax}}{b-a}C_{f,g}^{\left[ n\right] }\left( x\right) ,
\end{eqnarray*}%
and so%
\begin{equation}
\func{sgn}\left( H_{F_{n},G_{n}}\left( y\right) \right) =\func{sgn}\left(
C_{f,g}^{\left[ n\right] }\left( x\right) \right) .  \label{sgnHn-Cn}
\end{equation}%
Clearly, we have%
\begin{eqnarray*}
\lim_{n\rightarrow \infty }C_{f,g}^{\left[ n\right] }\left( x\right) &=&%
\frac{\int_{a}^{b}\left( t-a\right) f\left( t\right) e^{-xt}dt}{%
\int_{a}^{b}\left( t-a\right) g\left( t\right) e^{-xt}dt}\int_{a}^{b}g\left(
t\right) e^{-xt}dt-\int_{a}^{b}f\left( t\right) e^{-xt}dt \\
&=&\frac{F^{\prime }\left( x\right) +aF\left( x\right) }{G^{\prime }\left(
x\right) +aG\left( x\right) }G\left( x\right) -F\left( x\right) =\frac{%
G^{\prime }\left( x\right) }{G^{\prime }\left( x\right) +aG\left( x\right) }%
H_{F,G}\left( x\right) ,
\end{eqnarray*}%
which, in view of $G^{\prime }\left( x\right) <0$ and%
\begin{equation}
G^{\prime }\left( x\right) +aG\left( x\right) =-\int_{a}^{b}\left(
t-a\right) g\left( t\right) e^{-xt}dt<0  \label{dG+aG<0}
\end{equation}%
for $x\in \left( 0,\infty \right) $, implies that%
\begin{equation}
\func{sgn}\left( \lim_{n\rightarrow \infty }C_{f,g}^{\left[ n\right] }\left(
x\right) \right) =\func{sgn}\left( H_{F,G}\left( x\right) \right)
\label{sgnC-H}
\end{equation}%
for $x\in \left( 0,\infty \right) $.

(i) The necessity follows from%
\begin{equation*}
\left( \frac{F\left( x\right) }{G\left( x\right) }\right) ^{\prime }=\frac{%
G^{\prime }\left( x\right) }{G\left( x\right) ^{2}}H_{F,G}\left( x\right)
\leq 0
\end{equation*}%
for $x>0$, which, due to $G^{\prime }\left( x\right) <0$ for all $x\in
\left( 0,\infty \right) $, implies that $H_{F,G}\left( 0^{+}\right) \geq 0$.

Conversely, if $H_{F,G}\left( 0^{+}\right) \geq 0$, then by the relation (%
\ref{sgnC-H}), there is a large $N\in \mathbb{N}$ such that $C_{f,g}^{\left[
n\right] }\left( 0^{+}\right) \geq 0$ for $n>N$, which in combination with
the relation (\ref{sgnHn-Cn}) gives that $H_{F_{n},G_{n}}\left( y\right)
\geq 0$ as $y\rightarrow 1^{-}$ for $n>N$. On the other hand, since $f/g$ is
increasing on $\left[ a,t^{\ast }\right] $ and decreasing on $\left[ t^{\ast
},b\right] $, we easily see that there is a $i_{0}\geq 1$ such that the
sequence $\{f\left( t_{i}\right) /g\left( t_{i}\right) \}$ is strictly
increasing for $0\leq i\leq i_{0}$ and decreasing for $i_{0}<i\leq n-1$. By
Lemma \ref{L-PA/PB-pm}, the ratio $F_{n}\left( y\right) /G_{n}\left(
y\right) $ is strictly increasing with respect to $y$ on $\left( 0,1\right) $%
, that is,%
\begin{equation*}
\frac{d}{dy}\frac{F_{n}\left( y\right) }{G_{n}\left( y\right) }=\frac{%
G_{n}^{\prime }\left( y\right) }{G_{n}\left( y\right) }\left( \frac{%
F_{n}^{\prime }\left( y\right) }{G_{n}^{\prime }\left( y\right) }-\frac{%
F_{n}\left( y\right) }{G_{n}\left( y\right) }\right) >0\text{ for }n>N\text{
and }y\in \left( 0,1\right) ,
\end{equation*}%
which, due to $G_{n}\left( y\right) ,G_{n}^{\prime }\left( y\right) >0$,
yields%
\begin{equation*}
\frac{F_{n}^{\prime }\left( y\right) }{G_{n}^{\prime }\left( y\right) }-%
\frac{F_{n}\left( y\right) }{G_{n}\left( y\right) }>0\text{ for }n>N\text{
and }y\in \left( 0,1\right) .
\end{equation*}%
This together with (\ref{limFn/Gn}) and (\ref{ldFn/dGn-a}) gives%
\begin{equation*}
\frac{F^{\prime }\left( x\right) +aF\left( x\right) }{G^{\prime }\left(
x\right) +aG\left( x\right) }-\frac{F\left( x\right) }{G\left( x\right) }%
\geq 0\text{ for }x\in \left( 0,\infty \right) ,
\end{equation*}%
which indicates that%
\begin{equation*}
\left( \frac{F\left( x\right) }{G\left( x\right) }\right) ^{\prime }=\frac{%
G^{\prime }\left( x\right) +aG\left( x\right) }{G\left( x\right) }\left( 
\frac{F^{\prime }\left( x\right) +aF\left( x\right) }{G^{\prime }\left(
x\right) +aG\left( x\right) }-\frac{F\left( x\right) }{G\left( x\right) }%
\right) \leq 0\text{ for }x\in \left( 0,\infty \right) ,
\end{equation*}%
where the inequality holds due to $G\left( x\right) >0$ and $G^{\prime
}\left( x\right) +aG\left( x\right) <0$ by (\ref{dG+aG<0}). This proves the
sufficiency.

The first limit of (\ref{F/G-0,00}) is clear. While the second one follows
from (\ref{limFn/Gn}), which implies that%
\begin{eqnarray*}
\lim_{x\rightarrow \infty }\frac{F\left( x\right) }{G\left( x\right) }
&=&\lim_{x\rightarrow \infty }\lim_{n\rightarrow \infty }\frac{F_{n}\left(
y\right) }{G_{n}\left( y\right) }=\lim_{n\rightarrow \infty
}\lim_{x\rightarrow \infty }\frac{F_{n}\left( y\right) }{G_{n}\left(
y\right) } \\
&=&\lim_{n\rightarrow \infty }\lim_{y\rightarrow 0}\frac{F_{n}\left(
y\right) }{G_{n}\left( y\right) }=\frac{f\left( t_{0}\right) }{g\left(
t_{0}\right) }=\frac{f\left( a\right) }{g\left( a\right) }.
\end{eqnarray*}

(ii) If $H_{F,G}\left( 0^{+}\right) <0$, by the relations (\ref{sgnC-H}) and
(\ref{sgnHn-Cn}), there is a large enough $N\in \mathbb{N}$ such that $%
H_{F_{n},G_{n}}\left( y\right) <0$ as $y\rightarrow 1^{-}$ for $n>N$. By
Lemma \ref{L-PA/PB-pm}, there is a unique $y_{0}^{\left[ n\right] }\in
\left( 0,1\right) $ for given $n>N$ such that the function $F_{n}\left(
y\right) /G_{n}\left( y\right) $ is increasing on $\left( 0,y_{0}^{\left[ n%
\right] }\right) $ and decreasing on $\left( y_{0}^{\left[ n\right]
},1\right) $, that is,%
\begin{eqnarray*}
\frac{d}{dy}\frac{F_{n}\left( y\right) }{G_{n}\left( y\right) } &=&\frac{%
G_{n}^{\prime }\left( y\right) }{G_{n}\left( y\right) }\left( \frac{%
F_{n}^{\prime }\left( y\right) }{G_{n}^{\prime }\left( y\right) }-\frac{%
F_{n}\left( y\right) }{G_{n}\left( y\right) }\right) >0\text{ for }n>N\text{
and }y\in \left( 0,y_{0}^{\left[ n\right] }\right) , \\
\frac{d}{dy}\frac{F_{n}\left( y\right) }{G_{n}\left( y\right) } &=&\frac{%
G_{n}^{\prime }\left( y\right) }{G_{n}\left( y\right) }\left( \frac{%
F_{n}^{\prime }\left( y\right) }{G_{n}^{\prime }\left( y\right) }-\frac{%
F_{n}\left( y\right) }{G_{n}\left( y\right) }\right) <0\text{ for }n>N\text{
and }y\in \left( y_{0}^{\left[ n\right] },1\right) ,
\end{eqnarray*}%
where $y_{0}^{\left[ n\right] }$ is the unique solution of the equation $%
\left[ F_{n}\left( y\right) /G_{n}\left( y\right) \right] ^{\prime }=0$ on $%
\left( 0,1\right) $, namely,%
\begin{equation*}
\left[ \frac{d}{dy}\frac{F_{n}\left( y\right) }{G_{n}\left( y\right) }\right]
_{y=y_{0}^{\left[ n\right] }}=\frac{G_{n}^{\prime }\left( y_{0}^{\left[ n%
\right] }\right) }{G_{n}\left( y_{0}^{\left[ n\right] }\right) }\left( \frac{%
F_{n}^{\prime }\left( y_{0}^{\left[ n\right] }\right) }{G_{n}^{\prime
}\left( y_{0}^{\left[ n\right] }\right) }-\frac{F_{n}\left( y_{0}^{\left[ n%
\right] }\right) }{G_{n}\left( y_{0}^{\left[ n\right] }\right) }\right) =0%
\text{ for }n>N.
\end{equation*}

In the same treatment as part (i) of the proof of this theorem, the above
three relations imply that%
\begin{eqnarray*}
\left( \frac{F\left( x\right) }{G\left( x\right) }\right) ^{\prime } &\leq &0%
\text{ for }x\in \left( x^{\ast },\infty \right) , \\
\left( \frac{F\left( x\right) }{G\left( x\right) }\right) ^{\prime } &\geq &0%
\text{ for }x\in \left( 0,x^{\ast }\right) ,
\end{eqnarray*}%
where $x^{\ast }=\lim_{n\rightarrow \infty }x_{0}^{\left[ n\right] }$, $%
x_{0}^{\left[ n\right] }=-n\left( \ln y_{0}^{\left[ n\right] }\right)
/\left( b-a\right) $, and satisfies%
\begin{equation*}
\left[ \left( \frac{F\left( x\right) }{G\left( x\right) }\right) ^{\prime }%
\right] _{x=x^{\ast }}=0.
\end{equation*}%
Thus it remains to prove $x^{\ast }=\lim_{n\rightarrow \infty }x_{0}^{\left[
n\right] }\neq 0,\infty $. First, we claim that $x^{\ast
}=\lim_{n\rightarrow \infty }x_{0}^{\left[ n\right] }\neq 0$. If not, that
is, $\lim_{n\rightarrow \infty }x_{0}^{\left[ n\right] }=0$, then $F\left(
x\right) /G\left( x\right) $ is decreasing in $x$ on $\left( 0,\infty
\right) $. This, by part (i) of this theorem, implies that $H_{F,G}\left(
0^{+}\right) \geq 0$, which yields a contraction with the assumption that $%
H_{F,G}\left( 0^{+}\right) <0$.

Second, we also claim that $x^{\ast }=\lim_{n\rightarrow \infty }x_{0}^{%
\left[ n\right] }\neq \infty $. If not, that is, $\lim_{n\rightarrow \infty
}x_{0}^{\left[ n\right] }=\infty $, then $F\left( x\right) /G\left( x\right) 
$ is increasing in $x$ on $\left( 0,\infty \right) $. It then follows that
for all $x>0$,%
\begin{equation*}
\frac{F\left( x\right) }{G\left( x\right) }<\lim_{x\rightarrow \infty }\frac{%
F\left( x\right) }{G\left( x\right) }=\frac{f\left( a\right) }{g\left(
a\right) }.
\end{equation*}%
Since $\lim_{n\rightarrow \infty }\left( F_{n}\left( y\right) /G_{n}\left(
y\right) \right) =F\left( x\right) /G\left( x\right) $, there exists a large
enough $N_{1}\in \mathbb{N}$ such that for $n>N_{1}$ the inequality%
\begin{equation}
\frac{F_{n}\left( y\right) }{G_{n}\left( y\right) }<\frac{f\left( a\right) }{%
g\left( a\right) }  \label{Fn/Gn<}
\end{equation}%
holds for all $y\in \left( 0,1\right) $. On the other hand, as shown just
now, the function $F_{n}\left( y\right) /G_{n}\left( y\right) $ is
increasing on $\left( 0,y_{0}^{\left[ n\right] }\right) $ and decreasing on $%
\left( y_{0}^{\left[ n\right] },1\right) $, which suggests that there exists
a small enough $\delta \in \left( 0,y_{0}^{\left[ n\right] }\right) $ such
that $F_{n}\left( y\right) /G_{n}\left( y\right) $ is increasing on $\left(
0,\delta \right) $. Therefore, for $y\in \left( 0,\delta \right) $ and $n>N$,%
\begin{equation*}
\frac{F_{n}\left( y\right) }{G_{n}\left( y\right) }\geq \frac{F_{n}\left(
0\right) }{G_{n}\left( 0\right) }=\frac{f\left( t_{0}\right) }{g\left(
t_{0}\right) }=\frac{f\left( a\right) }{g\left( a\right) }.
\end{equation*}%
This is in contradiction with the inequality (\ref{Fn/Gn<}) for all $y\in
\left( 0,1\right) $ and $n>N_{1}$.

Consequently, $x^{\ast }=\lim_{n\rightarrow \infty }x_{0}^{\left[ n\right]
}\neq 0,\infty $, which ends the proof.
\end{proof}

Letting $a\rightarrow 0^{+}$, $b\rightarrow \infty $ in Theorem \ref{MT-1}.
Then $F\left( x\right) $ and $G\left( x\right) $ are Laplace transforms of
the functions $f$ and $g$, respectively. By properties of uniformly
convergent improper integral with a parameter, we have the following
monotonicity rule, where the first limit of (\ref{LF/G-0,00}) follows from
the first one of (\ref{F/G-0,00}) and Cauchy mean value theorem, that is,%
\begin{eqnarray*}
\lim_{x\rightarrow 0^{+}}\frac{F\left( x\right) }{G\left( x\right) }
&=&\lim_{b\rightarrow \infty }\frac{\int_{a}^{b}f\left( t\right) dt}{%
\int_{0}^{b}g\left( t\right) dt}=\lim_{b\rightarrow \infty }\frac{\left[
\int_{a}^{s}f\left( t\right) dt\right] _{s=b}-\left[ \int_{a}^{s}f\left(
t\right) dt\right] _{s=a}}{\left[ \int_{a}^{s}g\left( t\right) dt\right]
_{s=b}-\left[ \int_{0}^{s}g\left( t\right) dt\right] _{s=a}} \\
&=&\lim_{b\rightarrow \infty }\frac{f\left( a+\theta \left( b-a\right)
\right) }{g\left( a+\theta \left( b-a\right) \right) }=\lim_{t\rightarrow
\infty }\frac{f\left( t\right) }{g\left( t\right) },
\end{eqnarray*}%
here $\theta \in \left( 0,1\right) $.

\begin{theorem}
\label{MT-2}Let $f$ and $g$ be both continuous functions on $\left( 0,\infty
\right) $ with $g\left( t\right) >0$ for $t\in \left( 0,\infty \right) $ and
let $F\left( x\right) =\mathcal{L}\left( f\right) $ and $G\left( x\right) =%
\mathcal{L}\left( g\right) $ converge for $x>0$. If there is a $t^{\ast }\in
\left( 0,\infty \right) $ such that $f/g$ is strictly increasing
(decreasing) on $\left( 0,t^{\ast }\right) $ and strictly decreasing
(increasing) on $\left( t^{\ast },\infty \right) $, then the function $F/G$
is decreasing (increasing) on $\left( 0,\infty \right) $ if and only if%
\begin{equation*}
H_{F,G}\left( 0^{+}\right) =\lim_{x\rightarrow 0^{+}}\left( \frac{F^{\prime
}\left( x\right) }{G^{\prime }\left( x\right) }G\left( x\right) -F\left(
x\right) \right) \geq \left( \leq \right) 0,
\end{equation*}%
with%
\begin{equation}
\lim_{x\rightarrow 0^{+}}\frac{F\left( x\right) }{G\left( x\right) }%
=\lim_{t\rightarrow \infty }\frac{f\left( t\right) }{g\left( t\right) }\text{
\ and \ }\lim_{x\rightarrow \infty }\frac{F\left( x\right) }{G\left(
x\right) }=\lim_{t\rightarrow 0^{+}}\frac{f\left( t\right) }{g\left(
t\right) }  \label{LF/G-0,00}
\end{equation}%
provide the indicated limits exist. While $H_{F,G}\left( 0^{+}\right)
<\left( >\right) 0$, there is at leas one $x^{\ast }>0$ such that $F/G$ is
increasing (decreasing) on $\left( 0,x^{\ast }\right) $ and decreasing
(increasing) on $\left( x^{\ast },\infty \right) $.
\end{theorem}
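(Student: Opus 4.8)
The plan is to derive Theorem~\ref{MT-2} as the limiting case $a\rightarrow 0^{+}$, $b\rightarrow \infty$ of Theorem~\ref{MT-1}, which is already available on every finite window. For $0<a<t^{\ast }<b<\infty$ I would set
\begin{equation*}
F_{a,b}\left( x\right) =\int_{a}^{b}f\left( t\right) e^{-xt}dt,\qquad G_{a,b}\left( x\right) =\int_{a}^{b}g\left( t\right) e^{-xt}dt,
\end{equation*}
and note that the restriction of $f/g$ to $\left[ a,b\right] $ is still strictly increasing on $\left[ a,t^{\ast }\right] $ and strictly decreasing on $\left[ t^{\ast },b\right] $, so Theorem~\ref{MT-1} applies verbatim to the pair $\left( F_{a,b},G_{a,b}\right) $. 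The necessity half of the equivalence needs no limiting argument: since $g>0$ gives $G^{\prime }\left( x\right) =-\int_{0}^{\infty }tg\left( t\right) e^{-xt}dt<0$ on $\left( 0,\infty \right) $, formula~(\ref{df/g}) yields $\left( F/G\right) ^{\prime }=\left( G^{\prime }/G^{2}\right) H_{F,G}$, so if $F/G$ is decreasing then $H_{F,G}\geq 0$ throughout and in particular $H_{F,G}\left( 0^{+}\right) \geq 0$.

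For everything else I would push the finite-window conclusions to the limit. Convergence of the two Laplace integrals for $x>0$, together with the uniform convergence on every compact subset of $\left( 0,\infty \right) $ of $\int_{0}^{\infty }f\left( t\right) e^{-xt}dt$ and $\int_{0}^{\infty }tf\left( t\right) e^{-xt}dt$ (and their $g$-counterparts), gives, as $a\rightarrow 0^{+}$ and $b\rightarrow \infty $,
\begin{equation*}
F_{a,b}\rightarrow F,\quad G_{a,b}\rightarrow G,\quad F_{a,b}^{\prime }\rightarrow F^{\prime },\quad G_{a,b}^{\prime }\rightarrow G^{\prime }
\end{equation*}
locally uniformly, whence $H_{F_{a,b},G_{a,b}}\left( x\right) \rightarrow H_{F,G}\left( x\right) $ for each fixed $x>0$.

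Next I would transfer the sign pattern. By Theorem~\ref{MT-1} (via Lemma~\ref{L-PA/PB-pm}) each $H_{F_{a,b},G_{a,b}}$ is either nonnegative throughout $\left( 0,\infty \right) $ or changes sign exactly once, from negative to positive as $x$ increases. This single-crossing property is inherited by the pointwise limit: if there were $x_{1}<x_{2}$ with $H_{F,G}\left( x_{1}\right) >0>H_{F,G}\left( x_{2}\right) $, then for $\left( a,b\right) $ extreme enough $H_{F_{a,b},G_{a,b}}\left( x_{1}\right) >0>H_{F_{a,b},G_{a,b}}\left( x_{2}\right) $, contradicting that a negative-to-positive function which is positive at $x_{1}$ stays positive for all $x>x_{1}$. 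Hence the continuous function $H_{F,G}$ is never positive-then-negative, so it is either $\geq 0$ on $\left( 0,\infty \right) $ or negative-then-positive. Consequently $H_{F,G}\left( 0^{+}\right) \geq 0$ is equivalent to $H_{F,G}\geq 0$ everywhere, i.e.\ by~(\ref{df/g}) to $F/G$ decreasing; while $H_{F,G}\left( 0^{+}\right) <0$ forces a genuine negative part, so $\left( F/G\right) ^{\prime }=\left( G^{\prime }/G^{2}\right) H_{F,G}$ is positive then negative and $F/G$ increases on $\left( 0,x^{\ast }\right) $ and decreases on $\left( x^{\ast },\infty \right) $ for at least one crossing point $x^{\ast }$; the contradiction argument at the end of the proof of Theorem~\ref{MT-1}, run now with $H_{F,G}\left( 0^{+}\right) $ and with the boundary ratio $\lim_{t\rightarrow 0^{+}}f/g$, excludes $x^{\ast }=0,\infty $.

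It then remains to identify the two boundary limits in~(\ref{LF/G-0,00}). The first, $\lim_{x\rightarrow 0^{+}}F/G=\lim_{t\rightarrow \infty }f/g$, comes from letting $b\rightarrow \infty $ in the first identity of~(\ref{F/G-0,00}) and applying the Cauchy mean value theorem exactly as in the computation displayed just before the statement; the second, $\lim_{x\rightarrow \infty }F/G=\lim_{t\rightarrow 0^{+}}f/g$, comes from letting $a\rightarrow 0^{+}$ in the second identity of~(\ref{F/G-0,00}), since $f\left( a\right) /g\left( a\right) \rightarrow \lim_{t\rightarrow 0^{+}}f\left( t\right) /g\left( t\right) $. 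I expect the main obstacle to be the rigorous justification of these limit interchanges: that the locally uniform convergence of the truncated transforms and of their first derivatives is strong enough both to carry the single-crossing sign structure from $H_{F_{a,b},G_{a,b}}$ to $H_{F,G}$ and to legitimately swap the order of $x\rightarrow 0^{+}$ with $\left( a,b\right) \rightarrow \left( 0,\infty \right) $ hidden inside $H_{F,G}\left( 0^{+}\right) $---in particular in the borderline case $H_{F,G}\left( 0^{+}\right) =0$, where one must still secure $H_{F,G}\geq 0$ on all of $\left( 0,\infty \right) $---which is precisely where the hypothesis that the Laplace transforms converge for $x>0$ is indispensable.
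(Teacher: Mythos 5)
Your proposal follows exactly the paper's route: the paper proves Theorem \ref{MT-2} in a single short paragraph by letting $a\rightarrow 0^{+}$ and $b\rightarrow \infty $ in Theorem \ref{MT-1}, appealing to ``properties of uniformly convergent improper integral with a parameter'' and deriving the first limit in (\ref{LF/G-0,00}) from the first one in (\ref{F/G-0,00}) via the Cauchy mean value theorem, which is precisely your plan. Your write-up is in fact more detailed than the paper's own argument --- the direct necessity step via (\ref{df/g}) and the transfer of the single-crossing sign structure from $H_{F_{a,b},G_{a,b}}$ to $H_{F,G}$ are spelled out only by you --- and the borderline case $H_{F,G}\left( 0^{+}\right) =0$ that you honestly flag as the remaining obstacle is passed over silently by the paper as well.
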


\begin{theorem}
\label{MT-3}Suppose that (i) both the functions $f$ and $g$ are continuous
on $\left( 0,\infty \right) $ with $g\left( t\right) >0$ for $t\in \left(
0,\infty \right) $; (ii) the function $\mu $ is positive, differentiable and
increasing from $\left( 0,\infty \right) $ onto $\left( \mu \left(
0^{+}\right) ,\infty \right) $; (iii) both the functions%
\begin{equation*}
F\left( x\right) =\int_{0}^{\infty }f\left( t\right) e^{-x\mu \left(
t\right) }dt\text{ \ and \ }G\left( x\right) =\int_{0}^{\infty }g\left(
t\right) e^{-x\mu \left( t\right) }dt
\end{equation*}%
converge for all $x>0$. Then the following statements are valid:

(i) If the ratio $f/g$ is increasing (decreasing) on $\left( 0,\infty
\right) $, then $F/G$ is decreasing (increasing) on $\left( 0,\infty \right) 
$ with%
\begin{equation*}
\lim_{x\rightarrow 0^{+}}\frac{F\left( x\right) }{G\left( x\right) }%
=\lim_{t\rightarrow \infty }\frac{f\left( t\right) }{g\left( t\right) }\text{
\ and \ }\lim_{x\rightarrow \infty }\frac{F\left( x\right) }{G\left(
x\right) }=\lim_{t\rightarrow 0^{+}}\frac{f\left( t\right) }{g\left(
t\right) }.
\end{equation*}

(ii) If there is a $t^{\ast }\in \left( 0,\infty \right) $ such that $f/g$
is strictly increasing (decreasing) on $\left( 0,t^{\ast }\right) $ and
strictly decreasing (increasing) on $\left( t^{\ast },\infty \right) $, then
the ratio $F/G$ is decreasing (increasing) on $\left( 0,\infty \right) $ if
and only if%
\begin{equation*}
H_{F,G}\left( 0^{+}\right) =\lim_{x\rightarrow 0^{+}}\left( \frac{F^{\prime
}\left( x\right) }{G^{\prime }\left( x\right) }G\left( x\right) -F\left(
x\right) \right) \geq \left( \leq \right) 0.
\end{equation*}%
While $H_{F,G}\left( 0^{+}\right) <\left( >\right) 0$, there is at least one 
$x^{\ast }>0$ such that $F/G$ is increasing (decreasing) on $\left(
0,x^{\ast }\right) $ and decreasing (increasing) on $\left( x^{\ast },\infty
\right) $.
\end{theorem}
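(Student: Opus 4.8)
The plan is to reduce Theorem~\ref{MT-3} to the rules already established for ordinary Laplace transforms, namely Theorem~\ref{T-Rmr} for part (i) and Theorem~\ref{MT-2} for part (ii), by means of the change of variable $s=\mu(t)$. Since $\mu$ is a positive, differentiable and strictly increasing bijection of $(0,\infty)$ onto $(a,\infty)$ with $a:=\mu(0^{+})\ge 0$, its inverse $\mu^{-1}$ is increasing and differentiable, and the substitution $s=\mu(t)$, $dt=ds/\mu^{\prime}(\mu^{-1}(s))$, converts the kernel $e^{-x\mu(t)}$ into the genuine Laplace kernel $e^{-xs}$:
\[
F(x)=\int_{a}^{\infty}\phi(s)e^{-xs}\,ds,\qquad G(x)=\int_{a}^{\infty}\psi(s)e^{-xs}\,ds,
\]
where $\phi(s)=f(\mu^{-1}(s))/\mu^{\prime}(\mu^{-1}(s))$ and $\psi(s)=g(\mu^{-1}(s))/\mu^{\prime}(\mu^{-1}(s))$ are continuous on $(a,\infty)$ with $\psi>0$. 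The key point is that the positive Jacobian factor cancels in the quotient, so that $\phi/\psi=(f/g)\circ\mu^{-1}$; because $\mu^{-1}$ is increasing, $\phi/\psi$ inherits the exact monotonicity pattern of $f/g$, with the peak in case (ii) located at $s^{\ast}=\mu(t^{\ast})\in(a,\infty)$.

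Next I would normalise the lower limit of integration to $0$ by the shift $u=s-a$. Putting $\widehat{f}(u)=\phi(u+a)$ and $\widehat{g}(u)=\psi(u+a)$ on $(0,\infty)$ and writing $\widehat{F}=\mathcal{L}(\widehat{f})$, $\widehat{G}=\mathcal{L}(\widehat{g})$, one obtains $F(x)=e^{-ax}\widehat{F}(x)$ and $G(x)=e^{-ax}\widehat{G}(x)$, so the exponential factors cancel and
\[
\frac{F(x)}{G(x)}=\frac{\widehat{F}(x)}{\widehat{G}(x)}.
\]
Hence the monotonicity of $F/G$ coincides with that of $\widehat{F}/\widehat{G}$, while $\widehat{f}/\widehat{g}$ is merely a translate of $\phi/\psi$, so it is increasing (case (i)) or strictly increasing-then-decreasing (case (ii)) on $(0,\infty)$, with $\widehat{g}>0$. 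Part (i) then follows directly from Theorem~\ref{T-Rmr} and part (ii) from Theorem~\ref{MT-2}, both applied to $\widehat{F}/\widehat{G}$.

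It remains to transport the sign criterion and the boundary limits back to $F$ and $G$. Since $\widehat{g}>0$, the transform $\widehat{G}$ is strictly decreasing, so $\widehat{G}^{\prime}<0$, and likewise $G^{\prime}<0$; combining $F/G=\widehat{F}/\widehat{G}$ with \eqref{df/g} yields $\operatorname{sgn}H_{F,G}=-\operatorname{sgn}(F/G)^{\prime}=-\operatorname{sgn}(\widehat{F}/\widehat{G})^{\prime}=\operatorname{sgn}H_{\widehat{F},\widehat{G}}$ on $(0,\infty)$, and therefore at $0^{+}$ as well. Thus the hypothesis $H_{F,G}(0^{+})\ge 0$ is interchangeable with $H_{\widehat{F},\widehat{G}}(0^{+})\ge 0$, which is exactly what Theorem~\ref{MT-2} requires, and the existence of the turning point $x^{\ast}$ when $H_{F,G}(0^{+})<0$ is inherited from $\widehat{F}/\widehat{G}$. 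The limit identities follow from the substitution: as $\mu^{-1}(s)\to\infty$ when $s\to\infty$, we get $\lim_{x\to0^{+}}F/G=\lim_{u\to\infty}\widehat{f}/\widehat{g}=\lim_{t\to\infty}f/g$, and symmetrically $\lim_{x\to\infty}F/G=\lim_{t\to0^{+}}f/g$.

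The main obstacle I anticipate is analytic bookkeeping rather than a conceptual difficulty: one must justify the change of variable and the shift for the (possibly improper) integrals and confirm that $\phi,\psi$ are legitimate densities whose Laplace transforms converge for every $x>0$. The genuinely delicate point is the preservation of the sign of the auxiliary function $H$ under the shift; this is what makes the two $H(0^{+})$ conditions equivalent, and it relies on the exact cancellation $F/G=\widehat{F}/\widehat{G}$ together with $G^{\prime},\widehat{G}^{\prime}<0$.
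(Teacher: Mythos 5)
Your proposal is correct and follows essentially the same route as the paper: the substitution $s=\mu(t)$ (the paper writes it as $\mu(t)-a=s$) turns $F,G$ into $e^{-ax}$ times genuine Laplace transforms, part (i) reduces to Theorem \ref{T-Rmr} and part (ii) to Theorem \ref{MT-2}, and the sign equivalence $\operatorname{sgn}H_{F,G}=\operatorname{sgn}H_{\widehat{F},\widehat{G}}$ — which the paper obtains from the explicit identity $H_{e^{xa}F,e^{xa}G}=\frac{e^{xa}G^{\prime}}{G^{\prime}+aG}H_{F,G}$ together with $G^{\prime}<0$ and $G^{\prime}+aG<0$ — you derive equivalently from the cancellation $F/G=\widehat{F}/\widehat{G}$ combined with $G^{\prime},\widehat{G}^{\prime}<0$ and relation \eqref{sgnd(f/g)}. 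Both arguments pass from this pointwise sign equality on $(0,\infty)$ to the limit condition at $0^{+}$ with the same brevity, so the two proofs coincide in substance.
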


\begin{proof}
Let $\mu \left( t\right) -a=s$, where $a=\mu \left( 0^{+}\right) $. Then $%
F\left( x\right) $ and $G\left( x\right) $ are expressed as%
\begin{equation*}
F\left( x\right) =e^{-xa}\int_{0}^{\infty }\frac{f\left( t\left( s\right)
\right) }{\mu ^{\prime }\left( t\left( s\right) \right) }e^{-xs}ds\text{ \
and \ }G\left( x\right) =e^{-xa}\int_{0}^{\infty }\frac{g\left( t\left(
s\right) \right) }{\mu ^{\prime }\left( t\left( s\right) \right) }e^{-xs}ds,
\end{equation*}%
where $t\left( s\right) =\mu ^{-1}\left( s+a\right) $, and $F\left( x\right)
/G\left( x\right) $ can be represented in the form of ratio of two Laplace
transforms:%
\begin{equation*}
\frac{F\left( x\right) }{G\left( x\right) }=\frac{e^{xa}F\left( x\right) }{%
e^{xa}G\left( x\right) }=\frac{\int_{0}^{\infty }\left[ f\left( t\left(
s\right) \right) /\mu ^{\prime }\left( t\left( s\right) \right) \right]
e^{-xs}ds}{\int_{0}^{\infty }\left[ g\left( t\left( s\right) \right) /\mu
^{\prime }\left( t\left( s\right) \right) \right] e^{-xs}ds}:=\frac{%
\int_{0}^{\infty }f^{\ast }\left( s\right) e^{-xs}ds}{\int_{0}^{\infty
}g^{\ast }\left( s\right) e^{-xs}ds}.
\end{equation*}%
It is easy to verify that%
\begin{equation}
\frac{f^{\ast }\left( s\right) }{g^{\ast }\left( s\right) }=\frac{f\left(
t\left( s\right) \right) }{g\left( t\left( s\right) \right) }\text{, \ \ \ }%
\left( \frac{f^{\ast }\left( s\right) }{g^{\ast }\left( s\right) }\right)
^{\prime }=\left( \frac{f\left( t\right) }{g\left( t\right) }\right)
^{\prime }\times \frac{dt}{ds}=\frac{1}{\mu ^{\prime }\left( t\right) }%
\left( \frac{f\left( t\right) }{g\left( t\right) }\right) ^{\prime },
\label{f*/g*-f/g}
\end{equation}%
where $\mu ^{\prime }\left( t\right) >0$ for all $t>0$.

(i) If the ratio $f\left( t\right) /g\left( t\right) $ is increasing
(decreasing) on $\left( 0,\infty \right) $, then so is $f^{\ast }\left(
s\right) /g^{\ast }\left( s\right) $. By Theorem \ref{T-Rmr}, we easily find
that $\left( e^{xa}F\right) /\left( e^{xa}G\right) =F/G$ is decreasing
(increasing) on $\left( 0,\infty \right) $.

By the limit relations (\ref{LF/G-0,00}) we easily get%
\begin{eqnarray*}
\lim_{x\rightarrow 0^{+}}\frac{F\left( x\right) }{G\left( x\right) }
&=&\lim_{s\rightarrow \infty }\frac{f\left( t\left( s\right) \right) /\mu
^{\prime }\left( t\left( s\right) \right) }{g\left( t\left( s\right) \right)
/\mu ^{\prime }\left( t\left( s\right) \right) }=\lim_{t\rightarrow \infty }%
\frac{f\left( t\right) }{g\left( t\right) }, \\
\lim_{x\rightarrow \infty }\frac{F\left( x\right) }{G\left( x\right) }
&=&\lim_{s\rightarrow 0^{+}}\frac{f\left( t\left( s\right) \right) /\mu
^{\prime }\left( t\left( s\right) \right) }{g\left( t\left( s\right) \right)
/\mu ^{\prime }\left( t\left( s\right) \right) }=\lim_{t\rightarrow 0^{+}}%
\frac{f\left( t\right) }{g\left( t\right) }.
\end{eqnarray*}

(ii) If there is a $t^{\ast }\in \left( 0,\infty \right) $ such that $%
f\left( t\right) /g\left( t\right) $ is strictly increasing (decreasing) on $%
\left( 0,t^{\ast }\right) $ and strictly decreasing (increasing) on $\left(
t^{\ast },\infty \right) $, then by the second relation of (\ref{f*/g*-f/g}%
), there is a $s^{\ast }\in \left( 0,\infty \right) $ such that $f^{\ast
}\left( s\right) /g^{\ast }\left( s\right) $ is strictly increasing
(decreasing) on $\left( 0,s^{\ast }\right) $ and strictly decreasing
(increasing) on $\left( s^{\ast },\infty \right) $, where $s^{\ast }=\mu
\left( t^{\ast }\right) -a$. By Theorem \ref{MT-2}, the ratio $%
e^{xa}F/\left( e^{xa}G\right) =F/G$ is decreasing (increasing) on $\left(
0,\infty \right) $ if and only if%
\begin{equation}
\lim_{x\rightarrow 0^{+}}H_{e^{xa}F,e^{xa}G}\left( x\right)
=\lim_{x\rightarrow 0^{+}}\left( \frac{\left( e^{xa}F\left( x\right) \right)
^{\prime }}{\left( e^{xa}G\left( x\right) \right) ^{\prime }}e^{xa}G\left(
x\right) -e^{xa}F\left( x\right) \right) \geq \left( \leq \right) 0.
\label{HF*,G*}
\end{equation}%
We claim that the limit relation is equivalent to $\lim_{x\rightarrow
0^{+}}H_{F,G}\left( x\right) \geq \left( \leq \right) 0$. In fact, we easily
check that%
\begin{eqnarray*}
H_{e^{xa}F,e^{xa}G}\left( x\right) &=&\frac{e^{xa}\left( F^{\prime }\left(
x\right) +aF\left( x\right) \right) }{e^{xa}\left( G^{\prime }\left(
x\right) +aG\left( x\right) \right) }e^{xa}G\left( x\right) -e^{xa}F\left(
x\right) \\
&=&e^{xa}\frac{F^{\prime }\left( x\right) G\left( x\right) +aF\left(
x\right) G\left( x\right) -F\left( x\right) G^{\prime }\left( x\right)
-aF\left( x\right) G\left( x\right) }{G^{\prime }\left( x\right) +aG\left(
x\right) } \\
&=&\frac{e^{xa}G^{\prime }\left( x\right) }{G^{\prime }\left( x\right)
+aG\left( x\right) }\left( \frac{F^{\prime }\left( x\right) }{G^{\prime
}\left( x\right) }G\left( x\right) -F\left( x\right) \right) =\frac{%
e^{xa}G^{\prime }\left( x\right) }{G^{\prime }\left( x\right) +aG\left(
x\right) }H_{F,G}\left( x\right) .
\end{eqnarray*}%
This together with%
\begin{eqnarray*}
G^{\prime }\left( x\right) &=&-\int_{0}^{\infty }\mu \left( t\right) g\left(
t\right) e^{-x\mu \left( t\right) }dt<0, \\
G^{\prime }\left( x\right) +aG\left( x\right) &=&-\int_{0}^{\infty }\left[
\mu \left( t\right) -\mu \left( 0^{+}\right) \right] g\left( t\right)
e^{-x\mu \left( t\right) }dt<0
\end{eqnarray*}%
for $x>0$ indicates that%
\begin{equation*}
\func{sgn}\left( H_{e^{xa}F,e^{xa}G}\left( x\right) \right) =\func{sgn}%
\left( H_{F,G}\left( x\right) \right) ,
\end{equation*}%
which proves the claim just now.

(iii) If $\lim_{x\rightarrow 0^{+}}H_{F,G}\left( x\right) <\left( >\right) 0$%
, then $\lim_{x\rightarrow 0^{+}}H_{e^{xa}F,e^{xa}G}\left( x\right) <\left(
>\right) 0$. By Theorem \ref{MT-2}, there is at least one $x^{\ast }>0$ such
that $e^{xa}F\left( x\right) /\left( e^{xa}G\left( x\right) \right) =F\left(
x\right) /G\left( x\right) $ is increasing (decreasing) on $\left( 0,x^{\ast
}\right) $ and decreasing (increasing) on $\left( x^{\ast },\infty \right) $.

Thus we complete the proof.
\end{proof}

\section{A unified treatment for certain bounds of harmonic number}

The Euler-Mascheroni constant is defined by%
\begin{equation*}
\gamma =\lim_{n\rightarrow \infty }\left( H_{n}-\ln n\right) =0.577215664...,
\end{equation*}%
where $H_{n}=\sum_{k=1}^{n}k^{-1}$ is the $n$'th harmonic number. There is a
close connection between $H_{n}$ and the psi (or digamma) function. Indeed,
we have $H_{n}=\psi \left( n+1\right) +\gamma $. Several bounds for $H_{n}$
or $\psi \left( n+1\right) $ can see \cite{Tims-MG-55-1971}, \cite%
{Young-MG-75-1991}, \cite{DeTemple-AMM-100-1993}, \cite{Negoi-GM-15-1997}, 
\cite{Alzer-AMSUH-68-1998}, \cite{Villarino-arXiv-0510585}, \cite%
{Villarino-JIPAM-9(3)-2008}, \cite{Mortici-CMA-59-2010}, \cite%
{Chen-AML-23-2010}, \cite{Qi-AMC-218-2011}, \cite{Chen-CMA-64-2013}, \cite%
{Lu-JMAA-419-2014}, \cite{Yang-AMC-268-2015} \cite{Zhao-JIA-193-2015}.

In particular, Alzer \cite{Alzer-AMSUH-68-1998} obtained the double
inequality, 
\begin{equation*}
\frac{1}{2\left( n+a\right) }\leq H_{n}-\ln n-\gamma <\frac{1}{2\left(
n+b\right) }
\end{equation*}%
holds for $n\in \mathbb{N}$ with the best constants%
\begin{equation*}
a=\frac{1}{2\left( 1-\gamma \right) }-1\text{ \ and \ }b=\frac{1}{6}
\end{equation*}%
by proving the sequence%
\begin{equation}
A\left( n\right) =\frac{1}{2}\frac{1}{\psi \left( n+1\right) -\ln n}-n
\label{A(n)}
\end{equation}%
is strictly decreasing for $n\geq 1$.

Villarino \cite{Villarino-arXiv-0510585} showed that%
\begin{eqnarray}
H_{n} &=&\ln \sqrt{n\left( n+1\right) }+\gamma +\frac{1}{6n\left( n+1\right)
+L\left( n\right) },  \label{L(n)} \\
&=&\ln \left( n+\frac{1}{2}\right) +\gamma +\frac{1}{24\left( n+1/2\right)
^{2}+D\left( n\right) },  \label{D(n)}
\end{eqnarray}%
where both the sequences $L\left( n\right) $ and $D\left( n\right) $ are
increasing for $n\in \mathbb{N}$. Qi \cite{Qi-AMC-218-2011} showed that the
sequence%
\begin{equation}
Q\left( n\right) =\frac{1}{2}\frac{1}{\ln n+1/\left( 2n\right) -\psi \left(
n+1\right) }-12n^{2}  \label{Q(n)}
\end{equation}%
is strictly increasing for $n\in N$. These monotonicity of sequences $%
L\left( n\right) $, $D\left( n\right) $ and $Q\left( n\right) $ similarly
yield corresponding sharp bounds for $H_{n}$ or $\psi \left( n+1\right) $.

We remark that it is difficult to deal with the monotonicity of the function 
$A\left( x\right) $, $L\left( x\right) $, $D\left( x\right) $ and $Q\left(
x\right) $ on $\left( 0,\infty \right) $ by usual approach. However, if we
write them as ratios of two Laplace transforms, then we easily prove their
monotonicity on $\left( 0,\infty \right) $ by Theorems \ref{T-Rmr}\ and \ref%
{MT-2}. Here we chose $\Phi \left( x\right) =D\left( x-1/2\right) $ defined
by (\ref{D(n)}) and prove its monotonicity on $\left( 0,\infty \right) $. As
far as $A\left( x\right) $, $L\left( x\right) $ and $Q\left( x\right) $, we
only list their expressions in the form of ratios of Laplace transforms. In
fact, By means of the formulas%
\begin{eqnarray*}
\psi (x) &=&\int_{0}^{\infty }\left( \frac{e^{-t}}{t}-\frac{e^{-xt}}{1-e^{-t}%
}\right) dt, \\
\ln x &=&\int_{0}^{\infty }\frac{e^{-t}-e^{-xt}}{t}dt\text{, } \\
\frac{1}{x^{n}} &=&\frac{1}{\left( n-1\right) !}\int_{0}^{\infty
}t^{n-1}e^{-xt}dt,
\end{eqnarray*}%
we have%
\begin{equation*}
A\left( x\right) =\frac{1}{2}\frac{1}{\psi \left( x+1\right) -\ln x}-x=\frac{%
\int_{0}^{\infty }\left[ -p_{1}^{\prime }\left( t\right) \right] e^{-xt}dt}{%
\int_{0}^{\infty }p\left( t\right) e^{-xt}dt},
\end{equation*}%
where%
\begin{equation*}
p_{1}\left( t\right) =2\left( \frac{1}{t}-\frac{1}{e^{t}-1}\right) ;
\end{equation*}%
\begin{equation*}
L\left( x\right) =\frac{2}{2\psi \left( x+1\right) -\ln \left( x\left(
x+1\right) \right) }-6x\left( x+1\right) =\frac{\int_{0}^{\infty }\left[
-6\left( p_{2}^{\prime \prime }\left( t\right) +p_{2}^{\prime }\left(
t\right) \right) \right] e^{-xt}dt}{\int_{0}^{\infty }p_{2}\left( t\right)
e^{-xt}dt},
\end{equation*}%
where%
\begin{equation*}
p_{2}\left( t\right) =\frac{e^{2t}-2te^{t}-1}{t\left( e^{t}-1\right) e^{t}};
\end{equation*}%
\begin{equation*}
Q\left( x\right) =\frac{1}{\ln x+1/\left( 2x\right) -\psi \left( x+1\right) }%
-12x^{2}=\frac{\int_{0}^{\infty }\left[ -12p_{3}^{\prime \prime }\left(
t\right) \right] e^{-xt}dt}{\int_{0}^{\infty }p_{3}\left( t\right) e^{-xt}dt}%
,
\end{equation*}%
where%
\begin{equation*}
p_{3}\left( t\right) =\frac{1}{2}\left( \coth \frac{t}{2}-\frac{2}{t}\right)
.
\end{equation*}%
Next we prove the monotonicity of $\Phi \left( x\right) =D\left(
x-1/2\right) $ on $\left( 0,\infty \right) $ by Theorem \ref{MT-2}.

\begin{proposition}
\label{P-p1}The function%
\begin{equation*}
\Phi \left( x\right) =\frac{1}{\psi \left( x+1/2\right) -\ln x}-24x^{2}
\end{equation*}%
is strictly increasing from $\left( 0,\infty \right) $ onto $\left(
0,21/5\right) $.
\end{proposition}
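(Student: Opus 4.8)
The plan is to exhibit $\Phi$ as a quotient of two Laplace transforms and then apply Theorem \ref{MT-2}. First I would use the integral formulas for $\psi$ and $\ln$ together with $1-e^{-t}=2e^{-t/2}\sinh(t/2)$ to obtain
\[
\psi\left(x+\tfrac12\right)-\ln x=\int_{0}^{\infty}e^{-xt}q(t)\,dt,\qquad q(t)=\frac1t-\frac{1}{2\sinh(t/2)}.
\]
Writing $G(x)=\psi(x+1/2)-\ln x$ and integrating by parts twice, the boundary contributions $q(0^{+})=0$ and $q'(0^{+})=1/24$ give $24x^{2}G(x)=1+24\int_{0}^{\infty}q''(t)e^{-xt}\,dt$, so that
\[
\Phi(x)=\frac{1-24x^{2}G(x)}{G(x)}=\frac{F(x)}{G(x)},\qquad F(x)=\int_{0}^{\infty}\bigl(-24q''(t)\bigr)e^{-xt}\,dt.
\]
Thus I take $g=q$ and $f=-24q''$; the hypothesis $g>0$ on $(0,\infty)$ is exactly the elementary inequality $\sinh(t/2)>t/2$.

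The heart of the argument is to verify that $f/g=-24q''/q$ is strictly decreasing on some $(0,t^{\ast})$ and strictly increasing on $(t^{\ast},\infty)$, i.e. the ``decreasing then increasing'' branch of Theorem \ref{MT-2}. The difficulty is that the Taylor series of $q$ converges only for $|t|<2\pi$, so the coefficient criteria cannot be applied to $q$ directly. To remedy this I would clear denominators: multiplying numerator and denominator by the positive function $h(t)=t^{3}\sinh^{3}(t/2)$ turns $f/g$ into a ratio $A/B$ of two \emph{entire} functions (power series converging on all of $\mathbb{R}$), namely $B=hg=\tfrac12 t^{2}\sinh^{2}(t/2)\bigl(2\sinh(t/2)-t\bigr)$, whose Taylor coefficients are all positive, and $A=hf=-24\bigl(2\sinh^{3}(t/2)-\tfrac18 t^{3}\sinh^{2}(t/2)-\tfrac14 t^{3}\bigr)$. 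By Lemma \ref{L-A/B-g} it then suffices to show that the ratio of Taylor coefficients of $A$ to those of $B$ is decreasing and then increasing. I expect this coefficient computation, and the subsidiary inequality needed to pin down the single turning point of the coefficient ratio (possibly via Lemma \ref{L-sgnS}), to be the main obstacle.

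Once the shape of $f/g$ is known, the limit relations (\ref{LF/G-0,00}) supply the endpoints: from $q(t)\sim t/24$ and $-24q''(t)\sim (7/40)t$ as $t\to0^{+}$ one gets $\lim_{t\to0^{+}}f/g=21/5$, hence $\Phi(\infty)=21/5$, while $q(t)\sim1/t$ gives $\lim_{t\to\infty}f/g=0$, hence $\Phi(0^{+})=0$. To select the increasing alternative I would evaluate $H_{F,G}(0^{+})$ directly from $F=1-24x^{2}G$: a short computation yields
\[
H_{F,G}(x)=\frac{F'(x)}{G'(x)}G(x)-F(x)=-1-\frac{48x\,G(x)^{2}}{G'(x)},
\]
and since $G(x)\sim-\ln x$ and $G'(x)\sim-1/x$ as $x\to0^{+}$, the last term vanishes in the limit, giving $H_{F,G}(0^{+})=-1\le0$. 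By the ``decreasing then increasing'' case of Theorem \ref{MT-2}, $\Phi=F/G$ is therefore strictly increasing on $(0,\infty)$, and together with the endpoints it maps $(0,\infty)$ onto $(0,21/5)$.
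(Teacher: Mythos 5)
Your route is, step for step, the paper's own proof: the same kernel $q(t)=1/t-1/(2\sinh(t/2))$, the same representations $G=\mathcal{L}(q)$ and $F=1-24x^{2}G=\mathcal{L}(-24q'')$ obtained by two integrations by parts, the same plan of clearing the (radius-$2\pi$) singularities so that Lemma \ref{L-A/B-g} becomes applicable, the same value $H_{F,G}(0^{+})=-1$, and the same endpoint limits $0$ and $21/5$. Your multiplication by $t^{3}\sinh^{3}(t/2)$ reproduces, after the substitution $s=t/2$ and product-to-sum linearization, exactly the entire functions the paper expands; your $A$ and $B$ are computed correctly, your closed form $H_{F,G}=-1-48xG^{2}/G'$ together with $G\sim-\ln x$, $G'\sim-1/x$ is a slightly slicker route to the paper's limit $-1$, and your product form of $B$ has the small advantage of making the positivity of the denominator coefficients transparent, where the paper verifies $b_{n}>0$ for $n\geq4$ via a recurrence.

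The gap is the step you yourself flag as ``the main obstacle'' and then do not carry out: verifying that the coefficient ratio of $A$ to $B$ is decreasing then increasing. This is not a formality --- it is the substantive content of the paper's proof. Writing both series as $\sum_{n\geq4}a_{n}(s^{2})^{n-4}$ and $\sum_{n\geq4}b_{n}(s^{2})^{n-4}$ with explicit closed-form coefficients, the paper must show that $d_{n}=\left[\left(2n-1\right)!\right]^{2}\left(a_{n}b_{n+1}-a_{n+1}b_{n}\right)$ has a single sign change, namely $d_{n}<0$ for $4\leq n\leq9$ and $d_{n}>0$ for $n\geq10$. The turning point sits at $n_{0}=10$, so six exact evaluations of enormous integers ($d_{4}=-66\,802\,176$ through $d_{9}\approx-2.4\times10^{17}$) are required, and the positivity for $n\geq10$ demands splitting $d_{n}$ into $6^{2n}$-, $3^{2n}$- and $2^{2n}$-weighted pieces $a_{n}^{\ast},b_{n}^{\ast}$ and proving recurrence inequalities of the type $a_{n+1}^{\ast}-\tfrac{9}{4}a_{n}^{\ast}>0$ and $b_{n+1}^{\ast}-\tfrac{9}{4}b_{n}^{\ast}>0$ via explicit polynomial estimates. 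Nothing in your sketch guarantees in advance that the ratio changes direction exactly once, or where (and Lemma \ref{L-sgnS}, which you tentatively invoke, is what the paper uses for the Bessel application in Lemma \ref{L-hv-m}, not here). Until this coefficient analysis is supplied, the hypothesis of Theorem \ref{MT-2} on the shape of $f/g$ is unverified, and your argument is a correctly aimed outline of the paper's proof rather than a complete proof.
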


\begin{proof}
We write $\Phi \left( x\right) =F\left( x\right) /G\left( x\right) $, where%
\begin{eqnarray*}
F\left( x\right) &=&1-24x^{2}\left( \psi \left( x+1/2\right) -\ln x\right) ,
\\
G\left( x\right) &=&\psi \left( x+1/2\right) -\ln x.
\end{eqnarray*}%
It has been shown in \cite{Yang-JIA-157-2015} that%
\begin{eqnarray*}
G\left( x\right) &=&\int_{0}^{\infty }q\left( t\right) e^{-xt}dt, \\
x^{2}G\left( x\right) &=&\frac{1}{24}+\int_{0}^{\infty }e^{-xt}q^{\prime
\prime }\left( t\right) dt,
\end{eqnarray*}%
where%
\begin{equation}
q\left( t\right) =\frac{1}{t}-\frac{1}{2\sinh \left( t/2\right) }.  \label{q}
\end{equation}%
Then $F\left( x\right) $ can be written as 
\begin{equation*}
F\left( x\right) =1-24x^{2}G\left( x\right) =\int_{0}^{\infty }\left(
-24q^{\prime \prime }\left( t\right) \right) e^{-xt}dt.
\end{equation*}%
Thus $\Phi \left( x\right) $ is expressed as%
\begin{equation*}
\Phi \left( x\right) =\frac{F\left( x\right) }{G\left( x\right) }=\frac{%
\int_{0}^{\infty }\left[ -24q^{\prime \prime }\left( t\right) \right]
e^{-xt}dt}{\int_{0}^{\infty }q\left( t\right) e^{-xt}dt}.
\end{equation*}%
We first show that there is a $t^{\ast }>0$ such that the function $%
-24q^{\prime \prime }/q$ is decreasing on $\left( 0,t^{\ast }\right) $ and
increasing on $\left( t^{\ast },\infty \right) $. Direct computations give%
\begin{eqnarray*}
\frac{q^{\prime \prime }\left( t\right) }{q\left( t\right) } &=&\frac{1}{4}%
\frac{2\cosh ^{2}s\sinh s-s^{3}\cosh ^{2}s-2\sinh s-s^{3}}{s^{2}\left(
s-\sinh s\right) \sinh ^{2}s} \\
&=&\frac{1}{2}\frac{\sinh 3s-s^{3}\cosh 2s-3\sinh s-3s^{3}}{s^{2}\left(
\sinh 3s-2s\cosh 2s-3\sinh s+2s\right) },
\end{eqnarray*}%
where $s=t/2$. Expanding in power series yields%
\begin{equation*}
\frac{q^{\prime \prime }\left( t\right) }{q\left( t\right) }=\frac{1}{2}%
\frac{\sum_{n=3}^{\infty }\frac{3^{2n-1}-\left( 2n-1\right) \left(
2n-2\right) \left( 2n-3\right) 2^{2n-4}-3}{\left( 2n-1\right) !}s^{2n-1}}{%
\sum_{n=2}^{\infty }\frac{3^{2n-3}-\left( 2n-3\right) 2^{2n-3}-3}{\left(
2n-3\right) !}s^{2n-1}}:=\frac{1}{2}\frac{\sum_{n=4}^{\infty }a_{n}\left(
s^{2}\right) ^{n-4}}{\sum_{n=4}^{\infty }b_{n}\left( s^{2}\right) ^{n-4}}.
\end{equation*}%
Since%
\begin{equation*}
\left( 2n-1\right) !b_{n+1}-9\left( 2n-3\right) !b_{n}=\left( 10n-23\right)
2^{2n-3}+24>0\text{ for }n\geq 3
\end{equation*}%
and $b_{3}=0$, we see that $b_{n}>0$ for $n\geq 4$.\ Thus, to prove the
function $-24q^{\prime \prime }/q$ is decreasing on $\left( 0,t^{\ast
}\right) $ and increasing on $\left( t^{\ast },\infty \right) $, by Lemma %
\ref{L-A/B-g} it suffices to prove that there is an integer $n_{0}>4$ such
that the sequence $\{a_{n}/b_{n}\}_{n\geq 4}$ is increasing for $4\leq n\leq
n_{0}$ and decreasing for $n>n_{0}$. For this end, we have to prove $d_{n}=%
\left[ \left( 2n-1\right) !\right] ^{2}\left(
a_{n}b_{n+1}-a_{n+1}b_{n}\right) <0$ for $4\leq n\leq 9$ and $d_{n}>0$ for $%
n\geq 10$.

Some elementary computations gives%
\begin{eqnarray*}
d_{n} &=&18\left( 4n-1\right) +\frac{2}{9}\left( 4n-1\right) 3^{4n}-\frac{1}{%
108}\left( 2n-1\right) \left( 20n^{4}-56n^{3}-77n^{2}+464n-243\right) 6^{2n}
\\
&&+\frac{4}{9}\left( 64n^{2}-132n+41\right) 3^{2n}-\frac{3}{4}\left(
2n-1\right) \left( 2n-3\right) \left( 6n^{3}+5n^{2}+2n-1\right) 2^{2n}.
\end{eqnarray*}%
We find that%
\begin{equation*}
\begin{array}{lll}
d_{4}=-66\,802\,176, & d_{5}=-13\,774\,616\,064, & d_{6}=-1570\,251\,361%
\,536, \\ 
d_{7}=-127\,269\,822\,161\,664, & d_{8}=-7526\,731\,991\,528\,448, & 
d_{9}=-240\,861\,038\,835\,686\,400.%
\end{array}%
\end{equation*}%
To prove $d_{n}>0$ for $n\geq 10$, we write $d_{n}$ as%
\begin{equation*}
d_{n}=18\left( 4n-1\right) +\left( 4n-1\right) 6^{2n}\times a_{n}^{\ast
}+\left( 64n^{2}-132n+41\right) 2^{2n}\times b_{n}^{\ast },
\end{equation*}%
where%
\begin{eqnarray*}
a_{n}^{\ast } &=&\frac{2}{9}\left( \frac{3}{2}\right) ^{2n}-\frac{1}{108}%
\frac{\left( 2n-1\right) \left( 20n^{4}-56n^{3}-77n^{2}+464n-243\right) }{%
4n-1}, \\
b_{n}^{\ast } &=&\frac{4}{9}\left( \frac{3}{2}\right) ^{2n}-\frac{3}{4}\frac{%
\left( 2n-1\right) \left( 2n-3\right) \left( 6n^{3}+5n^{2}+2n-1\right) }{%
64n^{2}-132n+41}.
\end{eqnarray*}%
It is easy to verify that%
\begin{equation*}
a_{10}^{\ast }=\frac{710\,697\,141}{6815\,744}>0\text{, \ \ \ }b_{10}^{\ast
}=\frac{174\,443\,916\,097}{149\,159\,936}>0,
\end{equation*}%
and for $n\geq 10$,%
\begin{equation*}
a_{n+1}^{\ast }-\frac{9}{4}a_{n}^{\ast }=\frac{1}{432}\frac{\left(
2n^{2}+n-9\right) \left( 400n^{4}-2500n^{3}+1448n^{2}+1789n-777\right) }{%
\left( 4n-1\right) \left( 4n+3\right) }>0,
\end{equation*}%
\begin{equation*}
\begin{array}{l}
b_{n+1}^{\ast }-\dfrac{9}{4}b_{n}^{\ast }=\dfrac{3}{16}\dfrac{2n-1}{\left(
64n^{2}-4n-27\right) \left( 64n^{2}-132n+41\right) }\bigskip \\ 
\times \left( 4n^{4}\left( 960n^{2}-3004n-1181\right)
+19\,204n^{3}+16\,517n^{2}-684n-2697\right) >0,%
\end{array}%
\end{equation*}%
which yield $d_{n}>0$ for $n\geq 10$.

Second, it is easy to see that%
\begin{equation*}
\lim_{x\rightarrow 0^{+}}F\left( x\right) =1\text{, \ \ \ }%
\lim_{x\rightarrow 0^{+}}F^{\prime }\left( x\right) =0,
\end{equation*}%
and%
\begin{equation*}
\lim_{x\rightarrow 0^{+}}\frac{G\left( x\right) }{G^{\prime }\left( x\right) 
}=\lim_{x\rightarrow 0^{+}}\frac{\psi \left( x+1/2\right) -\ln x}{\psi
^{\prime }\left( x+1/2\right) -1/x}=0,
\end{equation*}%
which yield%
\begin{equation*}
\lim_{x\rightarrow 0^{+}}H_{F,G}\left( x\right) =\lim_{x\rightarrow
0^{+}}\left( \frac{F^{\prime }\left( x\right) }{G^{\prime }\left( x\right) }%
G\left( x\right) -F\left( x\right) \right) =-1<0.
\end{equation*}%
It then follows by Theorem \ref{MT-2} that the function $\Phi =F/G$ is
strictly increasing on $\left( 0,\infty \right) $.

An easy computation gives%
\begin{equation*}
\lim_{x\rightarrow 0^{+}}\Phi \left( x\right) =\lim_{t\rightarrow \infty }%
\frac{-24q^{\prime \prime }\left( t\right) }{q\left( t\right) }=0\text{ \
and \ }\lim_{x\rightarrow \infty }\Phi \left( x\right) =\lim_{t\rightarrow 0}%
\frac{-24q^{\prime \prime }\left( t\right) }{q\left( t\right) }=\frac{21}{5},
\end{equation*}%
which completes the proof.
\end{proof}

\section{An application to Bessel functions}

The modified Bessel functions of the second kind $K_{v}$ is defined as \cite[%
p. 78]{Watson-ATTBF-CUP-1922}%
\begin{equation}
K_{v}\left( x\right) =\frac{\pi }{2}\frac{I_{-v}\left( x\right) -I_{v}\left(
x\right) }{\sin \left( v\pi \right) },  \label{Kv-Iv}
\end{equation}%
where $I_{v}\left( x\right) $ is the modified Bessel functions of the first
kind which can be represented by the infinite series as 
\begin{equation}
I_{v}\left( x\right) =\sum_{n=0}^{\infty }\frac{\left( x/2\right) ^{2n+v}}{%
n!\Gamma \left( v+n+1\right) }\text{, \ }x\in \mathbb{R}\text{, \ }v\in 
\mathbb{R}\backslash \{-1,-2,...\},  \label{I_v-is}
\end{equation}
and the right-hand side of (\ref{Kv-Iv}) is replaced by its limiting value
if $v$ is an integer or zero.

We easily see that $K_{v}\left( x\right) =K_{-v}\left( x\right) $ for all $%
v\in \mathbb{R}$ and $x>0$ by (\ref{Kv-Iv}), so we assume that $v\geq 0$ in
this section, unless otherwise specified.

As showed in proof of Theorem 3.1 in \cite{Baricz-BAMS-82-2010}, the identity%
\begin{equation}
1-\frac{K_{v-1}\left( x\right) K_{v+1}\left( x\right) }{K_{v}\left( x\right)
^{2}}=\frac{1}{x}\left( \frac{xK_{v}^{\prime }\left( x\right) }{K_{v}\left(
x\right) }\right) ^{\prime }  \label{T-xdK/K}
\end{equation}%
holds for $v\in \mathbb{R}$ and $x>0$, and the function%
\begin{equation*}
x\mapsto \frac{xK_{v}^{\prime }\left( x\right) }{K_{v}\left( x\right) }
\end{equation*}%
is strictly decreasing on $\left( 0,\infty \right) $ for all $v\in \mathbb{R}
$ (see also \cite{Yang-PAMS-145-2017}). As another application, in this
section we will determine the monotonicity of the function $x\mapsto
x+xK_{v}^{\prime }\left( x\right) /K_{v}\left( x\right) $ on $\left(
0,\infty \right) $ by Theorem \ref{MT-3}. More precisely, we have

\begin{proposition}
\label{P-B1}For $v\geq 0$, let $K_{v}\left( x\right) $ be the modified
Bessel functions of the second kind.

(i) If $v\in \left( 1/2,\infty \right) $, then the function%
\begin{equation*}
x\mapsto \Lambda \left( x\right) =x+\frac{xK_{v}^{\prime }\left( x\right) }{%
K_{v}\left( x\right) }
\end{equation*}%
is strictly increasing from $\left( 0,\infty \right) $ onto $\left(
-v,-1/2\right) $.

(ii) If $v\in \lbrack 0,1/2)$, then the function $x\mapsto \Lambda \left(
x\right) $ is strictly decreasing from $\left( 0,\infty \right) $ onto $%
\left( -1/2,-v\right) $.

(iii) If $v\neq 1/2$, then the double inequality%
\begin{equation}
-x-\max \left( v,\frac{1}{2}\right) <\frac{xK_{v}^{\prime }\left( x\right) }{%
K_{v}\left( x\right) }<-x-\min \left( v,\frac{1}{2}\right)  \label{xdK/K<>}
\end{equation}%
holds for $x>0$ with the best constants $\min \left( v,1/2\right) $ and $%
\max \left( v,1/2\right) $.
\end{proposition}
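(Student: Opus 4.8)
The plan is to realize $\Lambda$ as a ratio of two transforms of the type covered by Theorem \ref{MT-3}, and then to read off everything from the quotient of the two densities. Starting from $K_{v}(x)=\int_{0}^{\infty}e^{-x\cosh t}\cosh(vt)\,dt$, I would take $\mu(t)=\cosh t$ (positive, increasing from $(0,\infty)$ onto $(1,\infty)$, so $\mu(0^{+})=1$) and $g(t)=\cosh(vt)$, giving $G(x)=K_{v}(x)$ and $G^{\prime}(x)=K_{v}^{\prime}(x)$. Writing $\Lambda(x)=x\bigl(1+K_{v}^{\prime}/K_{v}\bigr)=N(x)/K_{v}(x)$ with $N(x)=x\int_{0}^{\infty}(1-\cosh t)\cosh(vt)e^{-x\cosh t}\,dt$, the only obstruction to the required form is the factor $x$; I would remove it via $x\,e^{-x\cosh t}=-(\sinh t)^{-1}\tfrac{d}{dt}e^{-x\cosh t}$ followed by one integration by parts. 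Since $(\cosh t-1)/\sinh t=\tanh(t/2)$ and the boundary terms vanish ($U(0^{+})=0$, and $e^{-x\cosh t}$ kills the upper end), this gives $N(x)=\int_{0}^{\infty}f(t)e^{-x\cosh t}\,dt$ with $f(t)=-U^{\prime}(t)$, $U(t)=\tanh(t/2)\cosh(vt)$. Thus $\Lambda=F/G$ is of the form in Theorem \ref{MT-3}, with density quotient
\begin{equation*}
\phi(t):=\frac{f(t)}{g(t)}=-\frac{1}{2}\operatorname{sech}^{2}\frac{t}{2}-v\tanh\frac{t}{2}\tanh(vt).
\end{equation*}

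The crux is the shape of $\phi$. First, $\phi(0^{+})=-1/2$ and $\phi(\infty)=-v$, while the factorization $\phi(t)+\tfrac12=\tanh(t/2)\bigl[\tfrac12\tanh(t/2)-v\tanh(vt)\bigr]$, together with the monotonicity of $\lambda\mapsto\lambda\tanh(\lambda t)$, shows $\phi<-1/2$ for $v>1/2$, $\phi>-1/2$ for $v<1/2$, and $\phi\equiv-1/2$ for $v=1/2$ (which is why the critical value is excluded). To feed $\phi$ into Theorem \ref{MT-3} I must control the sign of $\phi^{\prime}$. Clearing positive factors, $\operatorname{sgn}\phi^{\prime}(t)=\operatorname{sgn}\Theta(t)$ with $\Theta$ the entire function $[1+\cosh(2vt)]\sinh(t/2)-\cosh(t/2)[v\sinh(2vt)+2v^{2}\sinh t]$; expanding $\Theta$ as a Maclaurin series (odd, with leading coefficient proportional to $1-4v^{2}$) and applying Lemma \ref{L-sgnS} shows that $\Theta$ changes sign at most once on $(0,\infty)$. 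Hence $\phi$ is either strictly monotone or strictly unimodal there, the initial direction being fixed by the sign of $1-4v^{2}$. I expect the verification that the Maclaurin coefficients of $\Theta$ meet the hypothesis of Lemma \ref{L-sgnS} (nonnegative, with a single sign reversal) to be the main obstacle.

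Now Theorem \ref{MT-3} applies: if $\phi$ is monotone, part (i) gives the monotonicity of $\Lambda$ at once, and if $\phi$ is unimodal, part (ii) does so once the sign of $H_{F,G}(0^{+})$ is known. To obtain that sign I use $H_{F,G}=\Lambda^{\prime}K_{v}^{2}/K_{v}^{\prime}$, so that $\operatorname{sgn}H_{F,G}=-\operatorname{sgn}\Lambda^{\prime}$ (since $K_{v}>0>K_{v}^{\prime}$), together with the identity (\ref{T-xdK/K}) in the form $\Lambda^{\prime}(x)=1+x-x\,K_{v-1}(x)K_{v+1}(x)/K_{v}(x)^{2}$. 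The small-$x$ asymptotics of the modified Bessel functions give $x\,K_{v-1}K_{v+1}/K_{v}^{2}\to0$ for $v>1/2$ and $\to\infty$ for $v<1/2$, so $\Lambda^{\prime}(0^{+})>0$ for $v>1/2$ and $\Lambda^{\prime}(0^{+})<0$ for $v<1/2$; equivalently $H_{F,G}(0^{+})\le0$ for $v>1/2$ and $\ge0$ for $v<1/2$. Feeding this into the relevant branch of Theorem \ref{MT-3} makes $\Lambda$ strictly increasing for $v>1/2$ and strictly decreasing for $v<1/2$, and the limit formulas there give $\lim_{x\to0^{+}}\Lambda=\lim_{t\to\infty}\phi=-v$ and $\lim_{x\to\infty}\Lambda=\lim_{t\to0^{+}}\phi=-1/2$, which produce the ranges $(-v,-1/2)$ in (i) and $(-1/2,-v)$ in (ii).

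Part (iii) is then a corollary: for $v\neq1/2$ the strict monotonicity and the two limits confine $\Lambda(x)$ to the open interval $\bigl(-\max(v,\tfrac12),-\min(v,\tfrac12)\bigr)$, and since $\Lambda(x)=x+xK_{v}^{\prime}(x)/K_{v}(x)$, subtracting $x$ turns this into (\ref{xdK/K<>}); the constants $\min(v,1/2)$ and $\max(v,1/2)$ are sharp precisely because $\Lambda$ approaches both endpoints.
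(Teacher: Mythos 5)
Your route is, in outline, exactly the paper's: the same integration by parts converts $x\left[ K_{v}+K_{v}^{\prime }\right] $ into $\int_{0}^{\infty }f\left( t\right) e^{-x\cosh t}dt$ with $f=-\frac{d}{dt}\left[ \tanh \left( t/2\right) \cosh \left( vt\right) \right] $, your density quotient $\phi $ is precisely $-h_{v}$ with $h_{v}$ as in (\ref{hv}), the shape of $\phi $ is settled by a power-series sign argument via Lemma \ref{L-sgnS} (your $\Theta $ equals, up to a positive factor, $-r_{v}$ from the paper, and your leading coefficient $1-4v^{2}$ matches the paper's $a_{1}=4\left( 2v-1\right) \left( 2v+1\right) $ up to sign), and Theorem \ref{MT-3} with the limits $\phi \left( 0^{+}\right) =-1/2$, $\phi \left( \infty \right) =-v$ then yields everything, including part (iii) exactly as in the paper. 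The one genuinely different ingredient is your treatment of $H_{F,G}\left( 0^{+}\right) $: the paper proves Lemma \ref{L-HF,G} by expanding $F,F^{\prime },G,G^{\prime }$ with the asymptotics (\ref{K_v-->0}) and shows the limit is exactly $0$ for $v\in \left( 0,1\right) $, whereas you read off only the sign from $\operatorname{sgn}H_{F,G}=-\operatorname{sgn}\Lambda ^{\prime }$ combined with the identity (\ref{T-xdK/K}) and the small-$x$ behaviour of $xK_{v-1}K_{v+1}/K_{v}^{2}$. This is a legitimate and arguably slicker shortcut (the identity is already quoted in the paper for other purposes), with one point to make explicit: Theorem \ref{MT-3} is phrased in terms of the limit $H_{F,G}\left( 0^{+}\right) $, and for $v\in \left( 0,1/2\right) $ your product $\Lambda ^{\prime }\left( x\right) K_{v}\left( x\right) ^{2}/K_{v}^{\prime }\left( x\right) $ is an $\infty \cdot 0$ form as $x\rightarrow 0^{+}$, so you should either verify the limit exists (it is $0$, as the exponents $x^{2v-1}\cdot x^{1-v}=x^{v}$ show) or note that a one-sided sign of $H_{F,G}$ near $0$ suffices for the weak inequality the theorem needs.

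The genuine gap is the step you yourself flag: the verification that the Maclaurin coefficients of $\Theta $ are of one sign up to a single reversal, in the direction dictated by $1-4v^{2}$, is not a routine check but the technical core of the whole proposition. In the paper it occupies the entire proof of Lemma \ref{L-hv-m}: one needs the two nested recurrences (\ref{Dan}) and (\ref{Dbn}), showing first that $\left\{ b_{n}\right\} $ is monotone with a computable sign change, hence that $\left\{ a_{n}\right\} $ is unimodal, and only then (using the endpoint values $a_{1}$ and the growth of $a_{n}/2^{2n}$ or $a_{n}/\left( 2v+1\right) ^{2n}$) that the coefficient sequence crosses zero exactly once; moreover the analysis splits into three cases ($v\geq 1$, where no sign change occurs and $\phi $ is globally monotone so that part (i) of Theorem \ref{MT-3} applies, and $v\in \left( 1/2,1\right) $, $v\in \left( 0,1/2\right) $, where Lemma \ref{L-sgnS}(ii) additionally requires checking the sign of $\Theta \left( \infty \right) $, which flips between the regimes because the dominant exponential changes from $e^{\left( 2v+1/2\right) t}$ to $e^{3t/2}$ at $v=1/2$). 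The case $v=0$ also needs separate (easy) handling since then $\phi $ is monotone. None of this would fail --- the paper carries it out --- but as submitted your argument asserts rather than proves the unimodal-or-monotone trichotomy of $\phi $, and without it neither branch of Theorem \ref{MT-3} can be invoked.
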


Before proving Proposition \ref{P-B1}, we give the following lemmas.

\begin{lemma}
\label{L-hv-m}For $v>0$ with $v\neq 1/2$, let the function $h_{v}$ be
defined on $\left( 0,\infty \right) $ by%
\begin{equation}
h_{v}\left( t\right) =\frac{\cosh \left( tv\right) +v\sinh \left( tv\right)
\sinh t}{\left( \cosh t+1\right) \cosh \left( tv\right) }.  \label{hv}
\end{equation}%
(i) If $v\in \lbrack 1,\infty )$ then $h_{v}$ is increasing from $\left(
0,\infty \right) $ onto $\left( 1/2,v\right) $.

(ii) If $v\in \left( 1/2,1\right) $, then there is a $t^{\ast }\in \left(
0,\infty \right) $ such that $h_{v}$ is increasing on $\left( 0,t^{\ast
}\right) $ and decreasing on $\left( t^{\ast },\infty \right) $.
Consequently, the inequalities%
\begin{equation*}
\frac{1}{2}=\min \left( \frac{1}{2},v\right) <h_{v}\left( t\right) \leq
\theta _{v}
\end{equation*}%
hold for $x>0$, where $\theta _{v}=h_{v}\left( t^{\ast }\right) $, here $%
t^{\ast }$ is the unique solution of the equation $h_{v}^{\prime }\left(
t\right) =0$ on $\left( 0,\infty \right) $.

(iii) If $v\in \left( 0,1/2\right) $, then there is a $t^{\ast }\in \left(
0,\infty \right) $ such that $h_{v}$ is decreasing on $\left( 0,t^{\ast
}\right) $ and increasing on $\left( t^{\ast },\infty \right) $. Therefore,
it holds that for $x>0$,%
\begin{equation*}
\theta _{v}\leq h_{v}\left( t\right) <\max \left( \frac{1}{2},v\right) =%
\frac{1}{2},
\end{equation*}%
where $\theta _{v}$ is as in (ii).
\end{lemma}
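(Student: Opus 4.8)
The plan is to treat $h_v$ as a ratio of two even entire functions and to apply the power-series monotonicity rule of Lemma \ref{L-A/B-g}. Write $h_v=N/D$ with
\[
N(t)=\cosh (vt)+v\sinh (vt)\sinh t,\qquad D(t)=\left( \cosh t+1\right) \cosh (vt).
\]
Both are even, so with $u=t^{2}$ they become genuine power series $N=\sum_{k\geq 0}\alpha _{k}u^{k}$ and $D=\sum_{k\geq 0}\beta _{k}u^{k}$. Using $\sinh (vt)\sinh t=\tfrac12[\cosh ((v+1)t)-\cosh ((v-1)t)]$ and $\cosh t\cosh (vt)=\tfrac12[\cosh ((v+1)t)+\cosh ((v-1)t)]$, the coefficients take the closed forms
\[
\alpha _{k}=\frac{1}{(2k)!}\left[ v^{2k}+\tfrac{v}{2}\big( (v+1)^{2k}-(v-1)^{2k}\big) \right] ,\qquad \beta _{k}=\frac{1}{(2k)!}\left[ v^{2k}+\tfrac12\big( (v+1)^{2k}+(v-1)^{2k}\big) \right] ,
\]
with $\beta _{k}>0$. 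Since $u\mapsto t^{2}$ is an increasing bijection of $(0,\infty )$, any monotonicity or single-peak behaviour of $r_{k}:=\alpha _{k}/\beta _{k}$ will transfer, via Lemma \ref{L-A/B-g}, to $h_{v}$ on $(0,\infty )$; here $r_{0}=1/2$ and $r_{k}\to v$, which will also deliver the boundary values $h_{v}(0^{+})=1/2$ and $h_{v}(\infty )=v$ after a direct check.

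The engine of the proof is the sign of $r_{k+1}-r_{k}$, i.e. of $\Delta _{k}:=\alpha _{k+1}\beta _{k}-\alpha _{k}\beta _{k+1}$ (up to the common positive factorial factor). I expect that when the cross product is expanded the pure squares cancel and only the mixed terms survive, collapsing $\Delta _{k}$ into a combination of three geometric sequences,
\[
\Delta _{k}=-\frac{(2v+1)(1-v)}{2}\big( v^{2}(v+1)^{2}\big) ^{k}+\frac{(2v-1)(1+v)}{2}\big( v^{2}(v-1)^{2}\big) ^{k}+2v^{2}\big( (v^{2}-1)^{2}\big) ^{k},
\]
so in particular $\Delta _{0}=4v^{2}-1$, matching the sign of $h_{v}^{\prime }$ near $0$.

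What remains is a sign analysis in $v$. For $v\geq 1$ all three coefficients are nonnegative, hence $\Delta _{k}\geq 0$, the sequence $\{r_{k}\}$ is increasing, and the standard monotone version of the power-series ratio rule gives that $h_{v}$ increases from $1/2$ to $v$, which is part (i). For $1/2<v<1$ the leading coefficient $-(2v+1)(1-v)/2$ is negative while the base $v^{2}(v+1)^{2}$ is the largest; dividing $\Delta _{k}$ by $(v^{2}(v+1)^{2})^{k}$ presents it as a negative constant plus two positive, strictly decreasing geometric terms, hence strictly decreasing in $k$ and passing from $\Delta _{0}=4v^{2}-1>0$ to a negative limit. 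Thus $\Delta _{k}$ changes sign exactly once, $\{r_{k}\}$ is increasing-then-decreasing, and Lemma \ref{L-A/B-g} yields the unimodal $h_{v}$ of part (ii). For $0<v<1/2$ the dominant base is $(v^{2}-1)^{2}$ with positive coefficient $2v^{2}$, the other two coefficients being negative; dividing by $((v^{2}-1)^{2})^{k}$ exhibits $\Delta _{k}$ as a positive constant plus two negative, strictly increasing geometric terms, hence strictly increasing from $\Delta _{0}=4v^{2}-1<0$ to a positive limit, so $\{r_{k}\}$ is decreasing-then-increasing and part (iii) follows. In each non-monotone case $\theta _{v}=h_{v}(t^{\ast })$ is the extremum, and comparing it with $h_{v}(0^{+})=1/2$ and $h_{v}(\infty )=v$ produces the stated two-sided bounds.

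The main obstacle is the algebraic collapse recorded in the second paragraph: confirming that $\Delta _{k}$ reduces to exactly three geometric terms (all square contributions cancelling) and pinning down the three coefficients. Once this identity is available, the rest is the essential but routine step of locating, for each range of $v$, the dominant geometric base and reading off a single sign change by the ``constant plus monotone geometric tail'' device; identifying the dominant base is precisely the comparison of $v^{2}(v+1)^{2}$ with $(v^{2}-1)^{2}$, whose outcome hinges on whether $v\gtrless 1/2$, and this is exactly where the three regimes $v\geq 1$, $1/2<v<1$ and $0<v<1/2$ separate.
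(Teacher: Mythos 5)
Your proof is correct, and it takes a genuinely different route from the paper's. The paper differentiates: it computes $h_v'=r_v/[(\cosh t+1)^2\cosh^2(vt)]$, expands $4r_v(t)=\sum_{n\geq 1}a_n t^{2n-1}/(2n-1)!$ by product-to-sum formulas, pins down the single sign change of $\{a_n\}$ through two recurrence relations plus limit arguments (with non-explicit switch indices $n_1<n_0$), and finishes with the sign-change rule for series, Lemma \ref{L-sgnS}. You never differentiate: you treat $h_v$ itself as a ratio of even series in $u=t^2$ and apply the ratio-of-power-series rule, Lemma \ref{L-A/B-g} --- the same lemma the paper uses for Proposition \ref{P-p1} --- so everything reduces to the cross-differences $\Delta_k$. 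Your claimed collapse is correct: writing $A=v^2$, $B=(v+1)^2$, $C=(v-1)^2$, the $A^{2k+1}$ terms cancel and, up to the positive factor $[(2k)!\,(2k+2)!]^{-1}$, one gets $\Delta_k=\tfrac12(2v+1)(v-1)(AB)^k+\tfrac12(2v-1)(v+1)(AC)^k+2v^2(BC)^k$, which matches your formula and gives $\Delta_0=4v^2-1$; the dominant-base comparison reduces to $v\gtrless 1-v$, i.e.\ $v\gtrless 1/2$, exactly as you say, so the ``constant plus decaying geometric terms'' device yields a unique sign change in each regime. What your route buys is a closed-form, index-explicit sign analysis where the paper's recurrences are softer, while staying inside the paper's toolkit. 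Two small patches you should make explicit: (a) for part (i) you invoke ``the standard monotone version'' of the ratio rule --- that is the classical Biernacki--Krzy\.{z} lemma, which the paper never states as a lemma (it is contained in the cited \cite{Yang-JMAA-428-2015}), so cite it; moreover at $v=1$ one has $AC=BC=0$, hence $\Delta_k=0$ for all $k\geq 1$ and the coefficient ratios are only weakly increasing ($r_0=1/2<r_k=1$), but strict increase of $h_1$ is immediate since $h_1(t)=1-1/(\cosh^2 t+\cosh t)$; (b) in parts (ii)--(iii) a single $\Delta_k$ could vanish at the turning index, so the two pieces of $\{r_k\}$ are only weakly monotone --- Lemma \ref{L-A/B-g} requires merely that each piece be non-constant, which your strict monotonicity of the normalized sequence $\Delta_k$ divided by the dominant base guarantees, but this should be said. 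With the endpoint evaluations $h_v(0^+)=\alpha_0/\beta_0=1/2$ and $h_v(\infty)=\lim_k r_k=v$ (both correct), the stated bounds in (ii) and (iii) follow as you indicate.
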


\begin{proof}
Differentiating and simplifying yield%
\begin{equation*}
h_{v}^{\prime }\left( t\right) =\frac{r_{v}\left( t\right) }{\left( \cosh
t+1\right) ^{2}\cosh ^{2}\left( tv\right) },
\end{equation*}%
where%
\begin{equation*}
r_{v}\left( t\right) =v^{2}\left( 1+\cosh t\right) \sinh t+v\left( 1+\cosh
t\right) \cosh \left( tv\right) \sinh \left( tv\right) -\cosh ^{2}\left(
tv\right) \sinh t.
\end{equation*}%
Using "product into sum" formulas and expanding in power series gives%
\begin{eqnarray*}
4r_{v}\left( t\right) &=&\left( v+1\right) \sinh \left( 2vt-t\right) +\left(
v-1\right) \sinh \left( 2tv+t\right) \\
&&+2v\sinh \left( 2tv\right) +2v^{2}\sinh \left( 2t\right) +2\left(
2v^{2}-1\right) \sinh t \\
&:&=\sum_{n=1}^{\infty }\frac{a_{n}}{\left( 2n-1\right) !}t^{2n-1},
\end{eqnarray*}%
where%
\begin{equation*}
a_{n}=\left( v+1\right) \left( 2v-1\right) ^{2n-1}+\left( v-1\right) \left(
2v+1\right) ^{2n-1}+\left( 2v\right) ^{2n}+v^{2}2^{2n}+2\left(
2v^{2}-1\right) .
\end{equation*}

To confirm the monotonicity of $h_{v}$, we need to deal with the sign of $%
a_{n}$. For this end, we first give two recurrence relations. It is easy to
check that%
\begin{equation}
\begin{array}{c}
\dfrac{a_{n+1}-a_{n}}{2^{2n}}=2v\left( v^{2}-1\right) \left( v-\frac{1}{2}%
\right) ^{2n-1}+2v\left( v^{2}-1\right) \left( v+\frac{1}{2}\right)
^{2n-1}\bigskip \\ 
+\left( 4v^{2}-1\right) v^{2n}+3v^{2}:=b_{n},%
\end{array}
\label{Dan}
\end{equation}%
\begin{equation}
\frac{b_{n+1}-b_{n}}{v\left( v^{2}-1\right) \left( v^{2}-1/4\right) }=\left(
2v-3\right) \left( v-\frac{1}{2}\right) ^{2n-2}+\left( 2v+3\right) \left( v+%
\frac{1}{2}\right) ^{2n-2}+4v^{2n-1}>0  \label{Dbn}
\end{equation}%
for $v\neq 1,1/2$. We now distinguish three cases to prove the desired
monotonicity.

\textbf{Case 1}: $v\geq 1$. It is clear that $a_{n}>0$ for $n\geq 1$, which
implies that $h_{v}^{\prime }\left( t\right) >0$ for $t\in \left( 0,\infty
\right) $.

\textbf{Case 2}: $v\in \left( 1/2,1\right) $. From the recurrence relation (%
\ref{Dbn}) we see that the sequence $\{b_{n}\}_{n\geq 1}$ is decreasing,
which together with%
\begin{eqnarray*}
b_{1} &=&2v^{2}\left( 2v-1\right) \left( 2v+1\right) >0, \\
\lim_{n\rightarrow \infty }b_{n} &=&\func{sgn}\left( v^{2}-1\right) \infty <0
\end{eqnarray*}%
yields that there is an integer $n_{1}>1$ such that $b_{n}\geq 0$ for $1\leq
n\leq n_{1}$ and $b_{n}\leq 0$ for $n\geq n_{1}$. This, by the recurrence
relation (\ref{Dan}), in turn implies that the sequence $\{a_{n}\}_{n\geq 1}$
is increasing for $1\leq n\leq n_{1}$ and decreasing for $n\geq n_{1}$.
Therefore, we obtain%
\begin{equation*}
a_{n}\geq a_{1}=4\left( 2v-1\right) \left( 2v+1\right) >0\text{ for }1\leq
n\leq n_{1}.
\end{equation*}%
On the other hand, it is seen that 
\begin{equation*}
\lim_{n\rightarrow \infty }\frac{a_{n}}{2^{2n}}=\func{sgn}\left( v-1\right)
\infty <0.
\end{equation*}%
It then follows that there is an integer $n_{0}>n_{1}$ such that $a_{n}\geq
0 $ for $1\leq n\leq n_{0}$ and $a_{n}\leq 0$ for $n\geq n_{0}$. By Lemma %
\ref{L-sgnS}, there is a $t^{\ast }>0$ such that $h_{v}^{\prime }\left(
t\right) >0$ for $t\in \left( 0,t^{\ast }\right) $ and $h_{v}^{\prime
}\left( t\right) <0$ for $t\in \left( t^{\ast },\infty \right) $.

\textbf{Case 3}: $v\in \left( 0,1/2\right) $. In this case, we see that the
sequence $\{b_{n}\}_{n\geq 1}$ is increasing, which in combination with%
\begin{eqnarray*}
b_{1} &=&2v^{2}\left( 2v-1\right) \left( 2v+1\right) <0, \\
\lim_{n\rightarrow \infty }b_{n} &=&3v^{2}>0
\end{eqnarray*}%
indicates that there is an integer $n_{1}>1$ such that $b_{n}\leq 0$ for $%
1\leq n\leq n_{1}$ and $b_{n}\geq 0$ for $n\geq n_{1}$. This, by the
recurrence relation (\ref{Dan}), means that the sequence $\{a_{n}\}_{n\geq
1} $ is decreasing for $1\leq n\leq n_{1}$ and increasing for $n\geq n_{1}$.
Hence, we deduce that%
\begin{equation*}
a_{n}\leq a_{1}=4\left( 2v-1\right) \left( 2v+1\right) <0\text{ for }1\leq
n\leq n_{1}.
\end{equation*}%
Moreover, it is seen that 
\begin{equation*}
\lim_{n\rightarrow \infty }\frac{a_{n}}{\left( 2v+1\right) ^{2n}}=\func{sgn}%
\left( v^{2}\right) \infty >0.
\end{equation*}%
It then follows that there is an integer $n_{0}>n_{1}$ such that $a_{n}\leq
0 $ for $1\leq n\leq n_{0}$ and $a_{n}\geq 0$ for $n\geq n_{0}$. By Lemma %
\ref{L-sgnS}, there is a $t^{\ast }>0$ such that $h_{v}^{\prime }\left(
t\right) <0$ for $t\in \left( 0,t^{\ast }\right) $ and $h_{v}^{\prime
}\left( t\right) >0$ for $t\in \left( t^{\ast },\infty \right) $.

This completes the proof.
\end{proof}

\begin{lemma}
\label{L-HF,G}Let $K_{v}\left( x\right) $ be the modified Bessel functions
of the second kind and let%
\begin{equation*}
F\left( x\right) =x\left[ K_{v}\left( x\right) +K_{v}^{\prime }\left(
x\right) \right] \text{ \ and \ }G\left( x\right) =K_{v}\left( x\right) .
\end{equation*}%
Then for $v\in \left( 0,1\right) $, we have%
\begin{equation*}
\lim_{x\rightarrow 0^{+}}H_{F,G}\left( x\right) =\lim_{x\rightarrow
0^{+}}\left( \frac{F^{\prime }\left( x\right) }{G^{\prime }\left( x\right) }%
G\left( x\right) -F\left( x\right) \right) =0.
\end{equation*}
\end{lemma}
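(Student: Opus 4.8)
The plan is to reduce everything to the small-argument behaviour of $K_v$ and its derivative. Since $0<v<1$, the order $v$ is never a positive integer, so the representation (\ref{Kv-Iv}) together with the series (\ref{I_v-is}) is valid; extracting the two lowest powers of $I_{\pm v}$ and invoking Euler's reflection formula $\Gamma(v)\Gamma(1-v)=\pi/\sin(\pi v)$ gives the two--term expansion
\[
K_v(x)=A\,x^{-v}+B\,x^{v}+O\!\left(x^{2-v}\right),\qquad A=2^{v-1}\Gamma(v)>0,\ B=2^{-v-1}\Gamma(-v),
\]
as $x\to0^{+}$, where $B$ is finite (indeed negative, since $-v\in(-1,0)$). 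Differentiating the convergent series termwise yields $K_v'(x)=-vA\,x^{-v-1}+vB\,x^{v-1}+O(x^{1-v})$. First I would record the four resulting expansions for $G=K_v$, $G'=K_v'$, $F=x(K_v+K_v')$ and $F'$, collecting powers of $x$; in particular $G=Ax^{-v}+Bx^{v}+\cdots$ and $F=-vAx^{-v}+vBx^{v}+Ax^{1-v}+Bx^{1+v}+\cdots$, so that both $F$ and $G$ are singular of order $x^{-v}$ at the origin.

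Next I would evaluate $H_{F,G}$ through the equivalent algebraic form $H_{F,G}=(F'G-FG')/G'$, which follows directly from the definition (\ref{H_f,g}). The numerator equals $G^{2}(F/G)'$ by (\ref{df/g}); substituting the expansions and collecting terms, the contributions of orders $x^{-2v-1}$ and $x^{2v-1}$ cancel identically (their coefficients being $v^{2}A^{2}-v^{2}A^{2}$ and $v^{2}B^{2}-v^{2}B^{2}$), leaving
\[
F'G-FG'=A^{2}x^{-2v}+4v^{2}AB\,x^{-1}+2AB+B^{2}x^{2v}+\cdots .
\]
Dividing by $G'=-vA\,x^{-v-1}\bigl(1-\tfrac{B}{A}x^{2v}+\cdots\bigr)$ and expanding the quotient then gives
\[
H_{F,G}(x)=-\frac{A}{v}\,x^{1-v}-4vB\,x^{v}+\cdots ,
\]
in which every surviving power of $x$ is strictly positive because $0<v<1$. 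Letting $x\to0^{+}$ therefore forces $H_{F,G}(0^{+})=0$, which is the assertion.

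The one delicate point is organisational rather than conceptual: because both $F$ and $G$ blow up like $x^{-v}$, the expression $F'G/G'-F$ is an $\infty-\infty$ (and, after the division, a $0\cdot\infty$) indeterminacy, so I must retain enough terms to witness the exact cancellation of the leading $x^{-v}$ behaviour and to verify that the first non-cancelling term carries the factor $x^{\min(v,\,1-v)}$. Concretely this means keeping \emph{both} leading powers $x^{\pm v}$ of $K_v$: the single term $Ax^{-v}$ does not suffice, since the coefficient of $x^{v}$ in $H_{F,G}$ is governed by $B$, whereas the higher-order terms $O(x^{2\mp v})$ produce only powers larger than $x^{\min(v,1-v)}$ and may be discarded. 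I expect this bookkeeping of the cancellation to be the main obstacle; once it is in hand the conclusion is immediate. The value $v=1/2$ requires no separate treatment, as the representation (\ref{Kv-Iv}) remains valid and the surviving powers $x^{1-v}=x^{v}=x^{1/2}$ still vanish in the limit.
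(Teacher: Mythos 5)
Your proof is correct, and it heads for the same destination by the same general route as the paper --- small-$x$ asymptotics of $K_{v}$ and the exact cancellation of the $x^{-v}$ singularity in $H_{F,G}=\left( F^{\prime }G-FG^{\prime }\right) /G^{\prime }$ --- but your execution differs in a way that genuinely matters. The paper works only with the one-term asymptotics $K_{v}\left( x\right) \thicksim \frac{1}{2}\Gamma \left( v\right) \left( x/2\right) ^{-v}$ from (\ref{K_v-->0}) together with its formal first and second derivatives, computes $F\thicksim \Gamma \left( v\right) \frac{x-v}{2}\left( x/2\right) ^{-v}$ and $F^{\prime }\thicksim \frac{1}{2}\Gamma \left( v\right) \frac{\left( 1-v\right) x+v^{2}}{x}\left( x/2\right) ^{-v}$, and concludes $H_{F,G}\left( x\right) \thicksim -\frac{\Gamma \left( v\right) }{v}\left( x/2\right) ^{1-v}$. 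Your two-term expansion --- I have checked the computations: the $x^{-2v-1}$ and $x^{2v-1}$ terms in $F^{\prime }G-FG^{\prime }$ do cancel with coefficients $v^{2}A^{2}$ and $v^{2}B^{2}$ respectively, the retained terms $A^{2}x^{-2v}+4v^{2}ABx^{-1}+2AB+B^{2}x^{2v}$ are as you state, and the discarded terms contribute only $O\left( x^{2-v}\right) $ to $H_{F,G}$ after division by $G^{\prime }$ --- yields $H_{F,G}\left( x\right) =-\frac{A}{v}x^{1-v}-4vBx^{v}+\cdots $, and this shows the paper's displayed equivalence captures the true leading behaviour only for $v\geq 1/2$: for $v\in \left( 0,1/2\right) $ the dominant term is $-4vBx^{v}$, which is \emph{positive} since $B=2^{-v-1}\Gamma \left( -v\right) <0$, so the paper's intermediate asymptotic formula even has the wrong sign there (consistency check: for $v\in \left( 0,1/2\right) $ the function $\Lambda =F/G$ of Proposition \ref{P-B1} is decreasing and $G^{\prime }<0$, which forces $H_{F,G}>0$ near $0$). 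None of this affects the lemma as stated, because both exponents $v$ and $1-v$ are positive on $\left( 0,1\right) $ and the limit is $0$ in every case; the paper's looser bookkeeping thus proves the correct statement via a slightly overstated intermediate claim, while your version is the airtight one and correctly identifies the leading order $x^{\min \left( v,1-v\right) }$. A further pleasant check of your formula: at $v=1/2$ one has $A=\sqrt{\pi /2}=-B$, so the coefficient $-A/v-4vB=-2A-2B$ vanishes, consistent with $\Lambda \equiv -1/2$ and $H_{F,G}\equiv 0$ in that boundary case.
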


\begin{proof}
By the asymptotic formulas \cite[p. 375, (9.6.9)]{Abramowitz-HMFFGMT-1972}%
\begin{equation}
K_{v}\left( x\right) \thicksim \frac{1}{2}\Gamma \left( v\right) \left( 
\frac{x}{2}\right) ^{-v}\text{ for\ }v>0\text{ as }x\rightarrow 0\text{,}
\label{K_v-->0}
\end{equation}%
we have that$\quad $as $x\rightarrow 0$, 
\begin{eqnarray*}
K_{v}^{\prime }\left( x\right) &\thicksim &-\frac{1}{4}\Gamma \left(
v+1\right) \left( \frac{x}{2}\right) ^{-v-1}\text{ for\ }v>0, \\
K_{v}^{\prime \prime }\left( x\right) &\thicksim &\frac{1}{8}\Gamma \left(
v+2\right) \left( \frac{x}{2}\right) ^{-v-2}\text{ for\ }v>0.
\end{eqnarray*}%
Then, for $v>0$, as $x\rightarrow 0^{+}$,%
\begin{eqnarray*}
F\left( x\right) &=&x\left[ K_{v}\left( x\right) +K_{v}^{\prime }\left(
x\right) \right] \thicksim x\left[ \frac{1}{2}\Gamma \left( v\right) \left( 
\frac{x}{2}\right) ^{-v}-\frac{1}{4}\Gamma \left( v+1\right) \left( \frac{x}{%
2}\right) ^{-v-1}\right] \\
&=&\Gamma \left( v\right) \frac{x-v}{2}\left( \frac{x}{2}\right) ^{-v},
\end{eqnarray*}%
\begin{eqnarray*}
F^{\prime }\left( x\right) &=&K_{v}\left( x\right) +\left( x+1\right)
K_{v}^{\prime }\left( x\right) +xK_{v}^{\prime \prime }\left( x\right) \\
&\thicksim &\frac{1}{2}\Gamma \left( v\right) \left( \frac{x}{2}\right)
^{-v}-\frac{x+1}{4}\Gamma \left( v+1\right) \left( \frac{x}{2}\right)
^{-v-1}+x\frac{1}{8}\Gamma \left( v+2\right) \left( \frac{x}{2}\right)
^{-v-2} \\
&=&\frac{1}{2}\Gamma \left( v\right) \frac{\left( 1-v\right) x+v^{2}}{x}%
\left( \frac{x}{2}\right) ^{-v},
\end{eqnarray*}%
which yield%
\begin{eqnarray*}
H_{F,G}\left( x\right) &=&\frac{F^{\prime }\left( x\right) }{G^{\prime
}\left( x\right) }G\left( x\right) -F\left( x\right) \\
&\thicksim &\frac{\frac{1}{2}\Gamma \left( v\right) \frac{\left( 1-v\right)
x+v^{2}}{x}\left( \frac{x}{2}\right) ^{-v}}{-\frac{1}{4}\Gamma \left(
v+1\right) \left( \frac{x}{2}\right) ^{-v-1}}\frac{1}{2}\Gamma \left(
v\right) \left( \frac{x}{2}\right) ^{-v}-\Gamma \left( v\right) \frac{x-v}{2}%
\left( \frac{x}{2}\right) ^{-v} \\
&=&-\frac{\Gamma \left( v\right) }{v}\left( \frac{x}{2}\right)
^{1-v}\rightarrow 0\text{ as }x\rightarrow 0^{+}\text{ if }v\in \left(
0,1\right) .
\end{eqnarray*}%
This completes the proof.
\end{proof}

We now are in a position to prove Proposition \ref{P-B1}.

\begin{proof}[Proof of Proposition \protect\ref{P-B1}]
We have 
\begin{equation*}
\Lambda \left( x\right) =\frac{x\left[ K_{v}\left( x\right) +K_{v}^{\prime
}\left( x\right) \right] }{K_{v}\left( x\right) }=\frac{F\left( x\right) }{%
G\left( x\right) }.
\end{equation*}%
By the integral representation (\ref{Kv-Ir}) we get that%
\begin{equation}
G\left( x\right) =K_{v}\left( x\right) =\int_{0}^{\infty }\cosh \left(
vt\right) e^{-x\cosh t}dt;  \label{G-ir}
\end{equation}%
\begin{eqnarray*}
K_{v}\left( x\right) +K_{v}^{\prime }\left( x\right) &=&\int_{0}^{\infty
}\cosh \left( vt\right) e^{-x\cosh t}dt-\int_{0}^{\infty }\cosh \left(
vt\right) \left( \cosh t\right) e^{-x\cosh t}dt \\
&=&-\int_{0}^{\infty }\cosh \left( vt\right) \left( \cosh t-1\right)
e^{-x\cosh t}dt,
\end{eqnarray*}%
then integration by parts yields%
\begin{eqnarray}
F\left( x\right) &=&x\left( K_{v}\left( x\right) +K_{v}^{\prime }\left(
x\right) \right) =\int_{0}^{\infty }\cosh \left( vt\right) \frac{\cosh t-1}{%
\sinh t}de^{-x\cosh t}  \notag \\
&=&-\int_{0}^{\infty }\frac{\cosh \left( vt\right) +v\sinh t\sinh \left(
vt\right) }{\cosh t+1}e^{-x\cosh t}dt.  \label{F-ir}
\end{eqnarray}%
Thus $\Lambda \left( x\right) $ can be written as%
\begin{equation*}
\Lambda \left( x\right) =\frac{F\left( x\right) }{G\left( x\right) }=\frac{%
\int_{0}^{\infty }\left( -\frac{\cosh \left( vt\right) +v\sinh t\sinh \left(
vt\right) }{\cosh t+1}\right) e^{-x\cosh t}dt}{\int_{0}^{\infty }\cosh
\left( vt\right) e^{-x\cosh t}dt}:=\frac{\int_{0}^{\infty }f\left( t\right)
e^{-x\mu \left( t\right) }dt}{\int_{0}^{\infty }g\left( t\right) e^{-x\mu
\left( t\right) }dt},
\end{equation*}%
where $\mu \left( t\right) =\cosh t$ and%
\begin{equation*}
f\left( t\right) =-\frac{\cosh \left( vt\right) +v\sinh t\sinh \left(
vt\right) }{\cosh t+1}\text{, \ \ \ }g\left( t\right) =\cosh \left(
vt\right) .
\end{equation*}%
Clearly, $\mu \left( t\right) $ is positive, differentiable and increasing
on $\left( 0,\infty \right) $, while $f\left( t\right) $ and $g\left(
t\right) $ are differentiable on $\left( 0,\infty \right) $ with $g\left(
t\right) >0$ for $t>0$. Also, we have%
\begin{equation*}
\frac{f\left( t\right) }{g\left( t\right) }=\frac{-\left[ \cosh \left(
vt\right) +v\sinh t\sinh \left( vt\right) \right] /\left( \cosh t+1\right) }{%
\cosh \left( vt\right) }=-h_{v}\left( t\right) .
\end{equation*}

(i) If $v\geq 1$, then by the first assertion of Lemma \ref{L-hv-m} we see
that $f/g$ is strictly decreasing on $\left( 0,\infty \right) $. It follows
by part (i) of Theorem \ref{MT-3} that $\Lambda =F/G$ is strictly increasing
on $\left( 0,\infty \right) $.

(ii) If $v\in \left( 1/2,1\right) $, then by the second assertion of Lemma %
\ref{L-hv-m} we see that there is a $t^{\ast }\in \left( 0,\infty \right) $
such that $f/g$ is decreasing on $\left( 0,t^{\ast }\right) $ and increasing
on $\left( t^{\ast },\infty \right) $. Also, $\lim_{x\rightarrow
0^{+}}H_{F,G}\left( x\right) =0$ due to Lemma \ref{L-HF,G}. These, by part
(ii) of Theorem \ref{MT-3}, yield that $\Lambda =F/G$ is strictly increasing
on $\left( 0,\infty \right) $.

(iii) If $v\in \left( 0,1/2\right) $, then the third assertion of Lemma \ref%
{L-hv-m} it is seen that there is a $t^{\ast }\in \left( 0,\infty \right) $
such that $f/g$ is increasing on $\left( 0,t^{\ast }\right) $ and decreasing
on $\left( t^{\ast },\infty \right) $. And, $\lim_{x\rightarrow
0^{+}}H_{F,G}\left( x\right) =0$ due to Lemma \ref{L-HF,G}. It then follows
from part (ii) of Theorem \ref{MT-3} that $\Lambda =F/G$ is strictly
decreasing on $\left( 0,\infty \right) $.

(iv) If $v=0$, then $f\left( t\right) /g\left( t\right) =-1/\left( \cosh
t+1\right) $ is clearly increasing on $\left( 0,\infty \right) $. It follows
from part (i) of Theorem \ref{MT-3} that $\Lambda =F/G$ is strictly
decreasing on $\left( 0,\infty \right) $.

The limit values are%
\begin{eqnarray*}
\Lambda \left( 0^{+}\right) &=&\lim_{x\rightarrow 0^{+}}\frac{F\left(
x\right) }{G\left( x\right) }=-\lim_{t\rightarrow \infty }\frac{\cosh \left(
vt\right) +v\sinh t\sinh \left( vt\right) }{\left( \cosh t+1\right) \cosh
\left( vt\right) }=-v, \\
\Lambda \left( \infty \right) &=&\lim_{x\rightarrow \infty }\frac{F\left(
x\right) }{G\left( x\right) }=-\lim_{t\rightarrow 0^{+}}\frac{\cosh \left(
vt\right) +v\sinh t\sinh \left( vt\right) }{\left( \cosh t+1\right) \cosh
\left( vt\right) }=-\frac{1}{2}.
\end{eqnarray*}%
The double inequality (\ref{xdK/K<>}) follows from the monotonicity of $F/G$
on $\left( 0,\infty \right) $, which ends the proof.
\end{proof}

As a consequence of \cite[Theorem 5]{Miller-ITSF-12(4)-2001}, Miller and
Samko showed that the function $x\mapsto \sqrt{x}e^{x}K_{v}\left( x\right) $
is strictly decreasing on $\left( 0,\infty \right) $ for $\left\vert
v\right\vert >1/2$. Yang and Zheng in \cite[Corollary 3.2]%
{Yang-PAMS-145-2017} reproved this assertion and further proved this
function is strictly increasing on $\left( 0,\infty \right) $ for $%
\left\vert v\right\vert <1/2$. Now we have a more general result by
Proposition \ref{P-B1}.

\begin{corollary}
\label{C-B1}For $v\geq 0$, let $K_{v}\left( x\right) $ be the modified
Bessel functions of the second kind. Then the function%
\begin{equation*}
x\mapsto x^{r}e^{x}K_{v}\left( x\right)
\end{equation*}%
is strictly increasing on $\left( 0,\infty \right) $ if and only if $r\geq
\max \left( v,1/2\right) $, and decreasing if and only if $r\leq \min \left(
v,1/2\right) $. While $v>\left( <\right) 1/2$ and $1/2<r<v$ ($v<r<1/2$),
there is an $x_{0}>0$ such that this function is decreasing (increasing) on $%
\left( 0,x_{0}\right) $ and increasing (decreasing) on $\left( x_{0},\infty
\right) $.
\end{corollary}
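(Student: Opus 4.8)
The plan is to reduce everything to the function $\Lambda(x)=x+xK_v'(x)/K_v(x)$ already analyzed in Proposition \ref{P-B1}. First I would set $\phi(x)=x^{r}e^{x}K_v(x)$ and note that $\phi(x)>0$ on $(0,\infty)$ since $K_v(x)>0$ there. Logarithmic differentiation of $\ln\phi(x)=r\ln x+x+\ln K_v(x)$ gives
\[
\frac{x\phi'(x)}{\phi(x)}=r+x+\frac{xK_v'(x)}{K_v(x)}=r+\Lambda(x).
\]
Because $x>0$ and $\phi(x)>0$, this yields $\operatorname{sgn}\phi'(x)=\operatorname{sgn}\bigl(r+\Lambda(x)\bigr)$, so the whole problem becomes a comparison of the constant $-r$ with the range of $\Lambda$.

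Next I would record the exact range of $\Lambda$ from Proposition \ref{P-B1}. For $v\neq 1/2$ one has in every case $\Lambda(0^{+})=-v$ and $\Lambda(\infty)=-1/2$, with neither endpoint attained: when $v>1/2$ the function $\Lambda$ increases strictly so that $-v<\Lambda(x)<-1/2$, and when $0\le v<1/2$ it decreases strictly so that $-1/2<\Lambda(x)<-v$. Consequently $\inf_{x>0}\Lambda(x)=-\max(v,1/2)$ and $\sup_{x>0}\Lambda(x)=-\min(v,1/2)$, with the infimum and the supremum not attained.

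The characterizations then follow from a short case analysis. The function $\phi$ is increasing on $(0,\infty)$ iff $\Lambda(x)\ge -r$ for all $x$, i.e.\ iff $-r\le\inf_{x>0}\Lambda(x)=-\max(v,1/2)$, which is exactly $r\ge\max(v,1/2)$; since the infimum is not attained, this inequality on $r$ forces $\Lambda(x)>-r$ and hence strict monotonicity, while any smaller $r$ makes $\Lambda(x)<-r$ as $x$ approaches the endpoint at which $\Lambda$ tends to its infimum, destroying monotonicity. The decreasing case is symmetric, replacing $\inf$ by $\sup$ and yielding $r\le\min(v,1/2)$. In the intermediate regime ($1/2<r<v$ when $v>1/2$, or $v<r<1/2$ when $v<1/2$) the value $-r$ lies strictly inside the open range of $\Lambda$, so by strict monotonicity there is a unique $x_0$ with $\Lambda(x_0)=-r$; the sign of $r+\Lambda$ changes exactly once across $x_0$, and the direction of the change (decreasing-then-increasing for $v>1/2$, increasing-then-decreasing for $v<1/2$) is dictated by whether $\Lambda$ is increasing or decreasing.

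Finally I would dispose of the excluded value $v=1/2$ directly, using $K_{1/2}(x)=\sqrt{\pi/(2x)}\,e^{-x}$, so that $\phi(x)=\sqrt{\pi/2}\,x^{\,r-1/2}$ is increasing iff $r\ge 1/2$ and decreasing iff $r\le 1/2$, in agreement with $\max(v,1/2)=\min(v,1/2)=1/2$ and the absence of an intermediate regime. I do not expect a serious obstacle here: the only point requiring care is the bookkeeping of the open endpoints of the range of $\Lambda$, which is precisely what makes the boundary values of $r$ still yield \emph{strict} monotonicity and pins down the ``if and only if'' thresholds exactly.
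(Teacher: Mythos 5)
Your proposal is correct and takes essentially the same route as the paper: both differentiate $x^{r}e^{x}K_{v}(x)$, reduce the sign of the derivative to that of $r+\Lambda(x)$ with $\Lambda(x)=x+xK_{v}'(x)/K_{v}(x)$, and settle the three regimes by comparing $-r$ with the open range $\bigl(-\max(v,1/2),-\min(v,1/2)\bigr)$ of $\Lambda$ from Proposition \ref{P-B1}, including the same single-crossing argument (monotone $\Lambda$ with endpoint values $r-v$ and $r-1/2$ of opposite signs) in the intermediate regime. Your separate treatment of $v=1/2$ via $K_{1/2}(x)=\sqrt{\pi/(2x)}\,e^{-x}$ is a small addition the paper omits; note only that there $x^{r}e^{x}K_{1/2}(x)$ is constant when $r=1/2$, so \emph{strict} monotonicity fails at that single parameter point, a detail glossed over both in the corollary's statement and in your closing remark.
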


\begin{proof}
Differentiation yields%
\begin{eqnarray*}
\left[ x^{r}e^{x}K_{v}\left( x\right) \right] ^{\prime }
&=&x^{r}e^{x}K_{v}\left( x\right) +rx^{r-1}e^{x}K_{v}\left( x\right)
+x^{r}e^{x}K_{v}^{\prime }\left( x\right) \\
&=&x^{r-1}e^{x}K_{v}\left( x\right) \left( r+x+\frac{xK_{v}^{\prime }\left(
x\right) }{K_{v}\left( x\right) }\right) :=x^{r-1}e^{x}K_{v}\left( x\right)
\times \phi \left( x\right) .
\end{eqnarray*}%
By Proposition \ref{P-B1}, $\left[ x^{r}e^{x}K_{v}\left( x\right) \right]
^{\prime }\geq \left( \leq \right) 0$ for all $x>0$ if and only if%
\begin{eqnarray*}
r &\geq &-\inf_{x>0}\left( x+\frac{xK_{v}^{\prime }\left( x\right) }{%
K_{v}\left( x\right) }\right) =\left\{ 
\begin{array}{cc}
v & \text{if }v\in \left( 1/2,\infty \right) \\ 
\frac{1}{2} & \text{if }v\in \left( 0,1/2\right)%
\end{array}%
\right. =\max \left( v,\frac{1}{2}\right) , \\
r &\leq &-\sup_{x>0}\left( x+\frac{xK_{v}^{\prime }\left( x\right) }{%
K_{v}\left( x\right) }\right) =\left\{ 
\begin{array}{cc}
\frac{1}{2} & \text{if }v\in \left( 1/2,\infty \right) \\ 
v & \text{if }v\in \left( 0,1/2\right)%
\end{array}%
\right. =\min \left( v,\frac{1}{2}\right) .
\end{eqnarray*}%
When $v>1/2$ and $1/2<r<v$, by part (i) of Proposition \ref{P-B1}, we see
that $x\mapsto r+x+xK_{v}^{\prime }\left( x\right) /K_{v}\left( x\right)
=\phi \left( x\right) $ is increasing on $\left( 0,\infty \right) $, which
in combination with $\phi \left( 0^{+}\right) =r-v<0$ and $\phi \left(
\infty \right) =r-1/2>0$ yields that there is an $x_{0}>0$ such that $\phi
\left( x\right) <0$ for $x\in \left( 0,x_{0}\right) $ and $\phi \left(
x\right) >0$ for $x\in \left( x_{0},\infty \right) $. That is to say, the
function $x\mapsto x^{r}e^{x}K_{v}\left( x\right) $ is decreasing on $\left(
0,x_{0}\right) $ and increasing on $\left( x_{0},\infty \right) $.

Similarly, by part (ii) of Proposition \ref{P-B1} we can prove that for $%
v\in \lbrack 0,1/2)$ and $v<r<1/2$, there is an $x_{0}>0$ such that the
function $x\mapsto x^{r}e^{x}K_{v}\left( x\right) $ is increasing on $\left(
0,x_{0}\right) $ and decreasing on $\left( x_{0},\infty \right) $. This
completes the proof.
\end{proof}

\begin{remark}
Corollary \ref{C-B1} implies that for all $0<x<y$, the double inequality%
\begin{equation}
e^{y-x}\left( \frac{y}{x}\right) ^{r_{1}}<\frac{K_{v}\left( x\right) }{%
K_{v}\left( y\right) }<e^{y-x}\left( \frac{y}{x}\right) ^{r_{2}}
\label{Kx/Ky<>}
\end{equation}%
if and only if $r_{1}\leq \min \left( \left\vert v\right\vert ,1/2\right) $
and $r_{2}\geq \max \left( \left\vert v\right\vert ,1/2\right) $. Obviously,
our inequalities (\ref{Kx/Ky<>}) are superior to those earlier results
appeared in \cite{Bordelon-SIAMREV-15-1973}, \cite{Ross-SIAMREV-15-1973}, 
\cite{Paris-SIAM-JMA-15(1)-1984}, \cite{Joshi-JAMS-50-1991}, \cite%
{Laforgia-JVAM-34(3)-1991}. Detailed comments can see \cite[Section 3]%
{Baricz-PEMS-53-2010}.
\end{remark}

By Lemma \ref{L-hv-m} and Bernstein's theorem, we give a class of completely
monotonic function.

\begin{corollary}
\label{C-B2}For $v\geq 0$, the function%
\begin{equation*}
P_{\lambda }\left( x\right) =\left( x+\lambda \right) e^{x}K_{v}\left(
x\right) +xe^{x}K_{v}^{\prime }\left( x\right)
\end{equation*}%
is CM on $\left( 0,\infty \right) $ if and only if%
\begin{equation*}
\lambda \geq \left\{ 
\begin{array}{cc}
v & \text{if }v\in \lbrack 1,\infty ), \\ 
\theta _{v} & \text{if }v\in \left( \frac{1}{2},1\right) , \\ 
\frac{1}{2} & \text{if }v\in \left( 0,\frac{1}{2}\right) ,%
\end{array}%
\right.
\end{equation*}%
and so is $-P_{\lambda }\left( x\right) $ if and only if 
\begin{equation*}
\lambda \leq \left\{ 
\begin{array}{cc}
\frac{1}{2} & \text{if }v\in \left( \frac{1}{2},\infty \right) , \\ 
\theta _{v} & \text{if }v\in \left( 0,\frac{1}{2}\right) ,%
\end{array}%
\right.
\end{equation*}%
where $\theta _{v}$ is as in Lemma \ref{L-hv-m}.
\end{corollary}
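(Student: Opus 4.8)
The plan is to collapse $P_\lambda$ into a single Laplace transform and then invoke Bernstein's theorem, reading off the optimal threshold for $\lambda$ from the range of $h_v$ furnished by Lemma \ref{L-hv-m}. First I would rewrite $P_\lambda$ through the functions $F$ and $G$ already analysed in the proof of Proposition \ref{P-B1}. Since
\[
P_\lambda\left( x\right) =\lambda e^{x}K_{v}\left( x\right) +xe^{x}\bigl[K_{v}\left( x\right) +K_{v}^{\prime }\left( x\right) \bigr]=e^{x}\bigl[\lambda G\left( x\right) +F\left( x\right) \bigr],
\]
the representations \eqref{G-ir} and \eqref{F-ir} combine, after factoring out $\cosh\left(vt\right)$, into
\[
P_\lambda\left( x\right) =e^{x}\int_{0}^{\infty }\cosh\left( vt\right) \bigl[\lambda -h_{v}\left( t\right) \bigr]e^{-x\cosh t}\,dt,
\]
where $h_{v}$ is exactly the function \eqref{hv}, because $f/g=-h_{v}$ as computed in the proof of Proposition \ref{P-B1}.

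Next I would convert this into a genuine Laplace transform via the substitution $s=\cosh t-1$, under which $e^{x}e^{-x\cosh t}=e^{-xs}$, $dt=ds/\sqrt{s\left( s+2\right) }$, and $t=\operatorname{arccosh}\left( s+1\right) =:t\left( s\right) $. This yields
\[
P_\lambda\left( x\right) =\int_{0}^{\infty }\Psi _{\lambda }\left( s\right) e^{-xs}\,ds,\qquad \Psi _{\lambda }\left( s\right) =\frac{\cosh\bigl( vt\left( s\right) \bigr) \bigl[\lambda -h_{v}\left( t\left( s\right) \right) \bigr]}{\sqrt{s\left( s+2\right) }}.
\]
The integral converges for $x>0$, being a linear combination of the convergent representations of $K_{v}$ and $K_{v}^{\prime }$. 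Since $\Psi _{\lambda }$ is continuous on $\left( 0,\infty \right) $, Bernstein's theorem together with the uniqueness of the representing density shows that $P_\lambda$ is CM on $\left( 0,\infty \right) $ if and only if $\Psi _{\lambda }\left( s\right) \geq 0$ for all $s>0$; as $\cosh\bigl( vt\left( s\right) \bigr) >0$ and $\sqrt{s\left( s+2\right) }>0$, this is equivalent to $\lambda \geq h_{v}\left( t\right) $ for every $t>0$, i.e.\ $\lambda \geq \sup_{t>0}h_{v}\left( t\right) $. Symmetrically, $-P_\lambda$ is CM if and only if $\lambda \leq \inf_{t>0}h_{v}\left( t\right) $.

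Finally I would read off the two extrema from Lemma \ref{L-hv-m}. For $v\in \lbrack 1,\infty )$ the lemma gives $h_{v}\bigl( \left( 0,\infty \right) \bigr) =\left( 1/2,v\right) $, so $\sup h_{v}=v$ and $\inf h_{v}=1/2$; for $v\in \left( 1/2,1\right) $ it gives $1/2<h_{v}\leq \theta _{v}$, so $\sup h_{v}=\theta _{v}$ and $\inf h_{v}=1/2$; and for $v\in \left( 0,1/2\right) $ it gives $\theta _{v}\leq h_{v}<1/2$, so $\sup h_{v}=1/2$ and $\inf h_{v}=\theta _{v}$. Substituting these values into the two equivalences above produces precisely the stated thresholds. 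The step I expect to be the main obstacle is the necessity (``only if'') direction: the delicate point is to argue that once $\Psi _{\lambda }$ dips below zero on some subinterval, $P_\lambda$ cannot be CM. This rests on the injectivity of the Laplace transform on measures, so that any nonnegative representing measure would have to coincide with $\Psi _{\lambda }\left( s\right) ds$, contradicting its positivity; the remaining boundary accounting (attainment of $\theta _{v}$ at the interior critical point versus the limiting values $1/2$ and $v$) is routine.
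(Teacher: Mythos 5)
Your proposal is correct and matches the paper's own proof almost step for step: the paper likewise writes $P_{\lambda }\left( x\right) =e^{x}F\left( x\right) +\lambda e^{x}G\left( x\right) =\int_{0}^{\infty }\left( \lambda -h_{v}\left( t\left( s\right) \right) \right) \frac{\cosh \left( vt\left( s\right) \right) }{\sinh \left( t\left( s\right) \right) }e^{-xs}ds$ with $t\left( s\right) =\cosh ^{-1}\left( s+1\right) $ (your $\sqrt{s\left( s+2\right) }$ is exactly $\sinh \left( t\left( s\right) \right) $), then applies Bernstein's theorem and reads off $\sup_{t>0}h_{v}$ and $\inf_{t>0}h_{v}$ from Lemma \ref{L-hv-m} in the same three cases. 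Your explicit handling of the necessity direction via uniqueness of the representing measure is a point the paper leaves implicit, but it is the standard justification and introduces no new ideas.
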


\begin{proof}
From integral representations (\ref{F-ir}) and (\ref{G-ir}), we have%
\begin{eqnarray*}
P_{\lambda }\left( x\right)  &=&e^{x}F\left( x\right) +\lambda e^{x}G\left(
x\right)  \\
&=&\int_{0}^{\infty }\left( -\frac{\cosh \left( vt\right) +v\sinh t\sinh
\left( vt\right) }{\cosh t+1}\right) e^{-x\left( \cosh t-1\right)
}dt+\lambda \int_{0}^{\infty }\cosh \left( vt\right) e^{-x\left( \cosh
t-1\right) }dt
\end{eqnarray*}%
\begin{equation*}
=\int_{0}^{\infty }\left( \lambda -h_{v}\left( t\right) \right) \cosh \left(
vt\right) e^{-x\left( \cosh t-1\right) }dt=\int_{0}^{\infty }\left( \lambda
-h_{v}\left( t\left( s\right) \right) \right) \frac{\cosh \left( vt\left(
s\right) \right) }{\sinh \left( t\left( s\right) \right) }e^{-xs}ds,
\end{equation*}%
where $h_{v}\left( t\right) $ is defined by (\ref{hv}) and $t\left( s\right)
=\cosh ^{-1}\left( s+1\right) $. By Bernstein's theorem and Lemma \ref%
{L-hv-m} $P_{\lambda }\left( x\right) $ is CM on $\left( 0,\infty \right) $
if and only if%
\begin{equation*}
\lambda \geq \sup_{t>0}\left( h_{v}\left( t\right) \right) =\left\{ 
\begin{array}{cc}
v & \text{if }v\in \lbrack 1,\infty ), \\ 
\theta _{v} & \text{if }v\in \left( \frac{1}{2},1\right) , \\ 
\frac{1}{2} & \text{if }v\in \left( 0,\frac{1}{2}\right) ,%
\end{array}%
\right. 
\end{equation*}%
and so is $-P_{\lambda }\left( x\right) $ if and only if%
\begin{equation*}
\lambda \leq \inf_{t>0}\left( h_{v}\left( t\right) \right) =\left\{ 
\begin{array}{cc}
\frac{1}{2} & \text{if }v\in \lbrack 1,\infty ), \\ 
\frac{1}{2} & \text{if }v\in \left( \frac{1}{2},1\right) , \\ 
\theta _{v} & \text{if }v\in \left( 0,\frac{1}{2}\right) ,%
\end{array}%
\right. 
\end{equation*}%
which completes the proof.
\end{proof}

\begin{remark}
It was proved in \cite[Theorem 5]{Miller-ITSF-12(4)-2001} that the function $%
x\mapsto \sqrt{x}e^{x}K_{v}\left( x\right) $ is CM on $\left( 0,\infty
\right) $ if $\left\vert v\right\vert >1/2$. Now we present a new proof by
Corollary \ref{C-B2}. Differentiation yields%
\begin{equation*}
-\left[ \sqrt{x}e^{x}K_{v}\left( x\right) \right] ^{\prime }=-\frac{1}{\sqrt{%
x}}\left[ \left( x+\frac{1}{2}\right) e^{x}K_{v}\left( x\right)
+xe^{x}K_{v}^{\prime }\left( x\right) \right] =\frac{1}{\sqrt{x}}\left[
-P_{1/2}\left( x\right) \right] .
\end{equation*}%
Since $x\mapsto 1/\sqrt{x}$ and $-P_{1/2}\left( x\right) $ are CM on $\left(
0,\infty \right) $ by Corollary \ref{C-B2}, we find that so is $x^{-1/2}%
\left[ -P_{1/2}\left( x\right) \right] $, and so is $\sqrt{x}%
e^{x}K_{v}\left( x\right) $ on $\left( 0,\infty \right) $.
\end{remark}

\begin{remark}
Baricz \cite{Baricz-PEMS-53-2010} conjectured that $x\mapsto \sqrt{x}%
e^{x}K_{v}\left( x\right) $ for all $|v|<1/2$ is a Bernstein function, which
was proved in \cite[Remark 3.3]{Yang-PAMS-145-2017}. By Corollary \ref{C-B2}%
, we can give a simple proof. Indeed, it suffices to prove $\left[ \sqrt{x}%
e^{x}K_{v}\left( x\right) \right] ^{\prime }$ for $v\in \left( 0,1/2\right) $
is CM on $\left( 0,\infty \right) $. Differentiation yields%
\begin{equation*}
\left[ \sqrt{x}e^{x}K_{v}\left( x\right) \right] ^{\prime }=\frac{1}{\sqrt{x}%
}\left[ \left( x+\frac{1}{2}\right) e^{x}K_{v}\left( x\right)
+xe^{x}K_{v}^{\prime }\left( x\right) \right] =\frac{1}{\sqrt{x}}%
P_{1/2}\left( x\right) .
\end{equation*}%
Since the function $x\mapsto 1/\sqrt{x}$ is CM on $\left( 0,\infty \right) $%
, while $P_{1/2}\left( x\right) $ is CM on $\left( 0,\infty \right) $ in
view of Corollary \ref{C-B2}, it then follows from \cite[Theorem 1]%
{Miller-ITSF-12(4)-2001} that so is $x^{-1/2}P_{1/2}\left( x\right) $ on $%
\left( 0,\infty \right) $.
\end{remark}

We note that (\ref{T-xdK/K}) can be written as%
\begin{equation*}
1+\frac{1}{x}-\frac{K_{v-1}\left( x\right) K_{v+1}\left( x\right) }{%
K_{v}\left( x\right) ^{2}}=\frac{1}{x}\left( x+\frac{xK_{v}^{\prime }\left(
x\right) }{K_{v}\left( x\right) }\right) ^{\prime }=\frac{1}{x}\Lambda
^{\prime }\left( x\right) ,
\end{equation*}%
which, by $\Lambda ^{\prime }\left( x\right) >\left( <0\right) $ if $%
\left\vert v\right\vert >\left( <\right) 1/2$ given in Proposition \ref{P-B1}%
, we derive the following corollary.

\begin{corollary}
Let $v\in \mathbb{R}$ with $\left\vert v\right\vert \neq 1/2$. Then the
following inequality%
\begin{equation*}
\frac{K_{v-1}\left( x\right) K_{v+1}\left( x\right) }{K_{v}\left( x\right)
^{2}}<\left( >\right) 1+\frac{1}{x},
\end{equation*}%
or equivalently,%
\begin{equation}
K_{v}\left( x\right) ^{2}-K_{v-1}\left( x\right) K_{v+1}\left( x\right)
>\left( <\right) -\frac{1}{x}K_{v}\left( x\right) ^{2}  \label{T-I}
\end{equation}%
holds for $x>0$ if $\left\vert v\right\vert >\left( <\right) 1/2$.
\end{corollary}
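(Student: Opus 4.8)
The plan is to read the corollary off directly from the identity displayed immediately before its statement, combined with the monotonicity of $\Lambda$ furnished by Proposition \ref{P-B1}. Starting from (\ref{T-xdK/K}), I would add $1/x$ to both sides and write $1/x=\frac{1}{x}(x)'$ to obtain
\[
1+\frac{1}{x}-\frac{K_{v-1}\left( x\right) K_{v+1}\left( x\right) }{K_{v}\left( x\right) ^{2}}=\frac{1}{x}\left( x+\frac{xK_{v}^{\prime }\left( x\right) }{K_{v}\left( x\right) }\right) ^{\prime }=\frac{1}{x}\Lambda ^{\prime }\left( x\right),
\]
valid for $v\in \mathbb{R}$ and $x>0$. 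Since $x>0$, the sign of the left-hand side coincides with the sign of $\Lambda ^{\prime }\left( x\right) $, so the entire problem reduces to determining that sign.

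First I would reduce to the case $v\geq 0$. Because $K_{v}=K_{-v}$ for all $v\in \mathbb{R}$, each of the three quantities $K_{v}\left( x\right) ^{2}$, $K_{v-1}\left( x\right) K_{v+1}\left( x\right) $ (note $K_{-v-1}K_{-v+1}=K_{v+1}K_{v-1}$) and $\Lambda \left( x\right) $ is invariant under $v\mapsto -v$, hence depends on $v$ only through $\left\vert v\right\vert $. It therefore suffices to prove the claim for $v\geq 0$ and then replace $v$ by $\left\vert v\right\vert $ throughout. Next I would invoke Proposition \ref{P-B1}: for $v\in \left( 1/2,\infty \right) $ the function $\Lambda $ is strictly increasing on $\left( 0,\infty \right) $, whence $\Lambda ^{\prime }\left( x\right) >0$ for every $x>0$, while for $v\in \lbrack 0,1/2)$ it is strictly decreasing, whence $\Lambda ^{\prime }\left( x\right) <0$.

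Combining the displayed identity with these sign statements gives
\[
1+\frac{1}{x}-\frac{K_{v-1}\left( x\right) K_{v+1}\left( x\right) }{K_{v}\left( x\right) ^{2}}>\left( <\right) 0\quad \text{according as }\left\vert v\right\vert >\left( <\right) \tfrac{1}{2},
\]
which is precisely the first displayed inequality of the corollary. Finally, multiplying through by $K_{v}\left( x\right) ^{2}>0$ and rearranging turns this into the equivalent form (\ref{T-I}), completing the argument.

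The point worth stressing is that no genuine obstacle remains at this stage: the substantive analytic work, namely the strict monotonicity of $\Lambda $, was already carried out in Proposition \ref{P-B1} by way of Lemma \ref{L-hv-m} and Theorem \ref{MT-3}. The only matters requiring a little care are the one-line manipulation of (\ref{T-xdK/K}) that exhibits the left-hand side as $\frac{1}{x}\Lambda ^{\prime }\left( x\right) $, and the evenness-in-$v$ observation that promotes the conclusion from $v\geq 0$ to arbitrary $v$ with $\left\vert v\right\vert \neq 1/2$.
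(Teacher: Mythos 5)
Your proposal is correct and follows essentially the same route as the paper: the paper likewise rewrites the identity (\ref{T-xdK/K}) as $1+\tfrac{1}{x}-K_{v-1}(x)K_{v+1}(x)/K_{v}(x)^{2}=\tfrac{1}{x}\Lambda'(x)$ and concludes from the sign of $\Lambda'$ supplied by Proposition \ref{P-B1}, with the extension to negative $v$ resting on the evenness $K_{v}=K_{-v}$ already noted in the paper. Your write-up merely makes the $v\mapsto -v$ reduction and the one-line manipulation more explicit, which is fine.
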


\begin{remark}
The inequality (\ref{T-I}) is the Tur\'{a}n type inequality for modified
Bessel functions of the second kind, which first appeared in \cite%
{Baricz-EM-33-2015}. More such inequalities can be found in \cite%
{Ismail-SIAM-JMA-9(4)-1978}, \cite{Haeringen-JMP-19-1978}, \cite%
{Laforgia-JIA-2010-253035}, \cite{Baricz-BAMS-82-2010}, \cite%
{Segura-JMAA-374-2011}, \cite{Baricz-PAMS-141(2)-2013}, \cite%
{Baricz-FM-26(1)-2014} \cite{Baricz-EM-33-2015}, \cite{Yang-PAMS-145-2017}.
\end{remark}

Finally, we give an improvement of the double inequality (\ref{xdK/K<>}).

\begin{corollary}
Let $v\in \mathbb{R}$ with $\left\vert v\right\vert \neq 1/2$. Then the
following inequality%
\begin{equation}
-\sqrt{x^{2}+x+\max \left( \left\vert v\right\vert ,\frac{1}{2}\right) ^{2}}<%
\frac{xK_{v}^{\prime }\left( x\right) }{K_{v}\left( x\right) }<-\sqrt{%
x^{2}+x+\min \left( \left\vert v\right\vert ,\frac{1}{2}\right) ^{2}}
\label{xdK/K<>-a}
\end{equation}%
holds for $x>0$.
\end{corollary}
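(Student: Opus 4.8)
The plan is to reduce the whole statement to two‑sided bounds on the single quantity $w(x)=xK_v'(x)/K_v(x)$ and then read off each side from a result already established above. Since $K_v=K_{-v}$, I may assume $v\ge 0$, so that $v\neq 1/2$ and $\{\min(v,1/2),\max(v,1/2)\}=\{v,1/2\}$. Because $K_v>0$ and $K_v$ is strictly decreasing, we have $w(x)<0$ for every $x>0$; hence for any $A>0$, $w>-\sqrt{A}\iff w^2<A$ and $w<-\sqrt{A}\iff w^2>A$. Thus \eqref{xdK/K<>-a} is equivalent to
\[
x^2+x+\min(v,\tfrac12)^2<w(x)^2<x^2+x+\max(v,\tfrac12)^2 .
\]

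First I would record the exact identity $w^2=v^2+x^2\,K_{v-1}K_{v+1}/K_v^2$. It follows at once from the recurrences $K_{v-1}+K_{v+1}=-2K_v'$ and $K_{v-1}-K_{v+1}=-(2v/x)K_v$, which give $K_{v\mp 1}=-K_v'\mp(v/x)K_v$ and hence $K_{v-1}K_{v+1}=(K_v')^2-(v^2/x^2)K_v^2$; dividing by $K_v^2$ and multiplying by $x^2$ yields the claim. With this identity the side involving $v^2$ becomes a pure statement about the ratio $K_{v-1}K_{v+1}/K_v^2$: for $v>1/2$ (so $\max(v,1/2)=v$) the upper bound $w^2<x^2+x+v^2$ is exactly $K_{v-1}K_{v+1}/K_v^2<1+1/x$, while for $v<1/2$ (so $\min(v,1/2)=v$) the lower bound $w^2>x^2+x+v^2$ is exactly $K_{v-1}K_{v+1}/K_v^2>1+1/x$. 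Both of these are precisely the Tur\'an-type corollary proved just above.

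The remaining side involves $1/4=(1/2)^2$, and I would extract it from Proposition \ref{P-B1} via $\Lambda(x)=x+w(x)$. Since $x^2+x+\tfrac14=(x+\tfrac12)^2$ and $w<0$, the bound $w^2>x^2+x+1/4$ is equivalent to $w<-(x+1/2)$, i.e. $\Lambda(x)<-1/2$, and $w^2<x^2+x+1/4$ is equivalent to $\Lambda(x)>-1/2$. For $v>1/2$, Proposition \ref{P-B1}(i) says $\Lambda$ maps $(0,\infty)$ onto $(-v,-1/2)$, so $\Lambda(x)<-1/2$, giving the lower bound $w^2>x^2+x+1/4$; for $v<1/2$, Proposition \ref{P-B1}(ii) says $\Lambda$ maps $(0,\infty)$ onto $(-1/2,-v)$, so $\Lambda(x)>-1/2$, giving the upper bound $w^2<x^2+x+1/4$. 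Combined with the previous paragraph this produces the two‑sided bound on $w^2$ in each case, hence \eqref{xdK/K<>-a}.

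I do not expect a genuine computational obstacle: the content of the proof is the observation that the two square‑root bounds split cleanly, one side being the Tur\'an-type ratio bound and the other being the elementary fact $\Lambda\lessgtr-1/2$ coming from the monotone range of $\Lambda$. The only points needing care are the reduction to $v\ge 0$, the sign bookkeeping in passing between $w$ and $w^2$ (legitimate precisely because $w<0$), and the preservation of strict inequality for all finite $x>0$ — which holds because both the Tur\'an corollary and the ranges in Proposition \ref{P-B1} are strict, the endpoint values $\Lambda(0^+)=-v$ and $\Lambda(\infty)=-1/2$ being attained only in the limit.
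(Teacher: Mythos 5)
Your proposal is correct and takes essentially the same route as the paper: both split the two-sided bound into the $(x+1/2)$-side, which is exactly the inequality $\Lambda \lessgtr -1/2$ furnished by the range of $\Lambda$ in Proposition \ref{P-B1} (the double inequality (\ref{xdK/K<>})), and the $\sqrt{x^{2}+x+v^{2}}$-side, which rests on the recurrence identity $\left( xK_{v}^{\prime }(x)/K_{v}(x)\right) ^{2}-v^{2}=x^{2}K_{v-1}(x)K_{v+1}(x)/K_{v}(x)^{2}$. Your citation of the Tur\'{a}n-type corollary for the latter is merely a repackaging of the paper's direct combination of this identity with (\ref{T-xdK/K}) and the sign of $\Lambda ^{\prime }$, so the logical content is identical, and your explicit bookkeeping with $w=xK_{v}^{\prime }/K_{v}<0$ when passing between $w$ and $w^{2}$ makes precise a step the paper leaves implicit.
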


\begin{proof}
The desired inequalities are equivalent to%
\begin{eqnarray*}
-\sqrt{x^{2}+x+v^{2}} &<&\frac{xK_{v}^{\prime }\left( x\right) }{K_{v}\left(
x\right) }<-x-\frac{1}{2}\text{ if }\left\vert v\right\vert >\frac{1}{2}, \\
-x-\frac{1}{2} &<&\frac{xK_{v}^{\prime }\left( x\right) }{K_{v}\left(
x\right) }<-\sqrt{x^{2}+x+v^{2}}\text{ if }\left\vert v\right\vert <\frac{1}{%
2}.
\end{eqnarray*}%
By the double inequality (\ref{xdK/K<>}), it suffices to prove%
\begin{equation}
\frac{xK_{v}^{\prime }\left( x\right) }{K_{v}\left( x\right) }>\left(
<\right) -\sqrt{x^{2}+x+v^{2}}\text{ if }\left\vert v\right\vert >\left(
<\right) 1/2.  \label{xdK/K><}
\end{equation}%
To this end, we use the recurrence relations (see \cite[p. 79]%
{Watson-ATTBF-CUP-1922})%
\begin{eqnarray}
\frac{xK_{v}^{\prime }\left( x\right) }{K_{v}\left( x\right) }+v &=&-\frac{%
xK_{v-1}\left( x\right) }{K_{v}\left( x\right) }, \\
\frac{xK_{v}^{\prime }\left( x\right) }{K_{v}\left( x\right) }-v &=&-\frac{%
xK_{v+1}\left( x\right) }{K_{v}\left( x\right) }
\end{eqnarray}%
to get the identity%
\begin{equation*}
\left( \frac{xK_{v}^{\prime }\left( x\right) }{K_{v}\left( x\right) }\right)
^{2}-v^{2}=x^{2}\frac{K_{v-1}\left( x\right) K_{v+1}\left( x\right) }{%
K_{v}\left( x\right) ^{2}}.
\end{equation*}%
This in combination with the identity (\ref{T-xdK/K}) yields%
\begin{equation*}
x^{2}+x+v^{2}-\left( \frac{xK_{v}^{\prime }\left( x\right) }{K_{v}\left(
x\right) }\right) ^{2}=x\left( x+\frac{xK_{v}^{\prime }\left( x\right) }{%
K_{v}\left( x\right) }\right) ^{\prime }=x\Lambda ^{\prime }\left( x\right) .
\end{equation*}%
Using the inequality $\Lambda ^{\prime }\left( x\right) >\left( <\right) 0$
if $\left\vert v\right\vert >\left( <\right) 1/2$ due to Proposition \ref%
{P-B1} and noting $K_{v}\left( x\right) >0$ and $K_{v}^{\prime }\left(
x\right) <0$, the inequality (\ref{xdK/K><}) follows. This completes the
proof.
\end{proof}

\begin{remark}
The bound $-\sqrt{x^{2}+x+v^{2}}$ for $xK_{v}^{\prime }\left( x\right)
/K_{v}\left( x\right) $ given in inequality \ref{xdK/K><} is better than
some known ones, which refer to \cite[p. 242--243]{Baricz-EM-33-2015}. While
the another one $-x-1/2$ seems to be a new and simple one.
\end{remark}

\section{Conclusions}

In this paper, by the monotonicity rule for the ratio of two polynomials
with the same highest degree (Lemma \ref{L-PA/PB-pm}) and definition of
integral, we find that the decreasing (increasing) property of three ratios%
\begin{equation*}
\frac{F\left( x\right) }{G\left( x\right) }:=\frac{\int_{a}^{b}f\left(
t\right) e^{-xt}dt}{\int_{a}^{b}g\left( t\right) e^{-xt}dt}\text{, \ \ \ }%
\frac{\int_{0}^{\infty }f\left( t\right) e^{-xt}dt}{\int_{0}^{\infty
}g\left( t\right) e^{-xt}dt}\text{, \ \ \ }\frac{\int_{0}^{\infty }f\left(
t\right) e^{-x\mu \left( t\right) }dt}{\int_{0}^{\infty }f\left( t\right)
e^{-x\mu \left( t\right) }dt},
\end{equation*}%
under the condition that $f/g$ has the monotonicity pattern that $\nearrow
\searrow $ ($\searrow \nearrow $), according as $H_{F,G}\left( 0^{+}\right)
\geq \left( \leq \right) 0$. Otherwise, the three ratios $F/G$ are unimodal.
Since many special functions have integral representations in the form of $%
\int_{a}^{b}\left( \cdot \right) e^{-xt}dt$, our three theorems in this
paper are efficient tools of researching certain special functions.

\end{document}